\numberwithin{equation}{section}
\newtheorem{theorem}{Theorem}[section]
\newtheorem{proposition}[theorem]{Proposition}
\newtheorem{corollary}[theorem]{Corollary}
\newtheorem{lemma}[theorem]{Lemma}
\newtheorem{conjecture}[theorem]{Conjecture}
\newtheorem{problem}[theorem]{Problem}
\newtheorem{example}[theorem]{Example}
\newtheorem{remark}[theorem]{Remark}
\newtheorem{defn}[theorem]{Definition}
\theoremstyle{definition}
\newcommand{\maj}{{\mathrm {maj}}}
\newcommand{\sign}{{\mathrm {sign}}}
\newcommand{\Val}{{\mathrm {Val}}}
\newcommand{\Rise}{{\mathrm {Rise}}}
\newcommand{\Stir}{{\mathrm {Stir}}}
\newcommand{\Hilb}{{\mathrm {Hilb}}}
\newcommand{\grFrob}{{\mathrm {grFrob}}}
\newcommand{\des}{{\mathrm {des}}}
\newcommand{\rev}{{\mathrm {rev}}}
\newcommand{\Frob}{{\mathrm {Frob}}}
\newcommand{\ann}{{\mathrm {ann}}}
\newcommand{\symm}{{\mathfrak{S}}}
\newcommand{\wt}{{\mathrm{wt}}}
\newcommand{\CC}{{\mathbb {C}}}
\newcommand{\QQ}{{\mathbb {Q}}}
\newcommand{\ZZ}{{\mathbb {Z}}}
\newcommand{\RR}{{\mathbb{R}}}
\newcommand{\aaa}{{\mathbf {a}}}
\newcommand{\xx}{{\mathbf {x}}}
\newcommand{\yy}{{\mathbf {y}}}
\begin{document}

\title[Vandermondes in superspace]
{Vandermondes in superspace}

\author{Brendon Rhoades}
\address
{Department of Mathematics \newline \indent
University of California, San Diego \newline \indent
La Jolla, CA, 92093, USA}
\email{bprhoades@ucsd.edu}

\author{Andrew Timothy Wilson}
\address
{Department of Mathematics \newline \indent
Portland State University \newline \indent
Portland, OR, 97201, USA}
\email{andwils2@pdx.edu}

\begin{abstract}
Superspace of rank $n$ is a $\QQ$-algebra 
with $n$ commuting generators 
$x_1, \dots, x_n$ and $n$ anticommuting generators $\theta_1, \dots, \theta_n$.
We present an extension of the Vandermonde determinant to superspace which depends
on a sequence $\aaa = (a_1, \dots, a_r)$ of nonnegative integers of length $r \leq n$.
We use  superspace Vandermondes to construct  graded representations of 
the symmetric group.
This construction recovers hook-shaped Tanisaki quotients, the coinvariant ring for the Delta Conjecture
constructed by Haglund, Rhoades, and Shimozono, and a superspace quotient related to positroids
and Chern plethysm constructed by Billey, Rhoades, and Tewari.
We define a notion of partial differentiation with respect to anticommuting variables to construct
doubly graded modules from superspace Vandermondes. 
These doubly graded modules carry a natural ring structure which satisfies a 2-dimensional version of 
Poincar\'e duality.
The application of polarization operators gives rise to other bigraded modules which give a conjectural 
module for the symmetric function $\Delta'_{e_{k-1}} e_n$ appearing in the 
Delta Conjecture of Haglund, Remmel, and Wilson.
\end{abstract}

\keywords{Vandermonde, harmonic space, delta operator, superspace}
\maketitle

\section{Introduction}
\label{Introduction}

In this paper we extend the Vandermonde from the classical polynomial ring in $n$ variables to 
a noncommutative deformation of this ring called {\em superspace}.
We use  superspace Vandermondes  to generate 
interesting graded symmetric group modules including
\begin{itemize}
\item
a family $R_{n,k}$ of quotient rings introduced by Haglund, Rhoades, and Shimozono
\cite{HRS} with connections to   the 
cohomology of  Pawlowski-Rhoades moduli spaces of
 spanning line configurations \cite{PR, RW}
(Theorems~\ref{delta-vandermonde-theorem} and \ref{higher-constant-sequences}), 
\item
a class of quotient rings studied by Billey, Rhoades, and Tewari \cite{BRT} related to positroids
and Chern plethysm (Proposition~\ref{w-structure}), 
\item
the Tanisaki quotients $R_{\lambda}$ corresponding to hook-shaped partitions $\lambda \vdash n$
which present the cohomology of the corresponding Springer fiber
$\mathcal{B}_{\lambda}$  \cite{Tanisaki} (Proposition~\ref{zero-tanisaki}) and conjecturally other
Tanisaki quotients $R_{\lambda}$ (Conjecture~\ref{tanisaki-conjecture}), 
\item
a class of {\em doubly} graded modules with a bigraded multiplication
which exhibit a kind of rotational symmetry (Corollary~\ref{duality-corollary})
and a 2-dimensional version of Poincar\'e duality (Corollary~\ref{poincare-corollary}), and
\item
 a class of doubly graded modules whose bigraded Frobenius image is conjecturally given 
by the expression $\Delta'_{e_{k-1}} e_n$ appearing in the
 Delta Conjecture of Haglund, Remmel, and Wilson \cite{HRW} (Conjecture~\ref{double-frobenius}).
\end{itemize}

Let $\QQ[\xx_n] := \QQ[x_1, \dots, x_n]$ be the polynomial ring in $n$ variables equipped with the action
of the symmetric group $\symm_n$ by subscript permutation.
The {\em Vandermonde} 
$\Delta_n$ is an important element of $\QQ[\xx_n]$ with several equivalent definitions.
If we let $\varepsilon_n \in \QQ[\symm_n]$ be the  group algebra element 
\begin{equation}
\varepsilon_n := \sum_{w \in \symm_n} \sign(w) \cdot w,
\end{equation}
we have
\begin{equation}
\label{classical-vandermonde-definition}
\Delta_n := \prod_{1 \leq i < j \leq n} (x_i - x_j) =
\det \begin{pmatrix}
x_1^{n-1} & x_2^{n-1} & \cdots & x_n^{n-1} \\
 & & \vdots & \\
x_1 & x_2 & \cdots & x_n\\
 1 & 1 & \cdots & 1
\end{pmatrix} =
\varepsilon_n \cdot (x_1^{n-1} x_2^{n-2} \cdots x_{n-1}^1 x_n^0).
\end{equation}

For a positive integer $n$, {\em superspace} of rank $n$ is the unital associative $\QQ$-algebra with generators 
$x_1, x_2, \dots, x_n$ and $\theta_1, \theta_2, \dots, \theta_n$ subject to the relations
\begin{equation}
x_i x_j = x_j x_i, \quad x_i \theta_j = \theta_j x_i, \quad \theta_i \theta_j = - \theta_j \theta_i
\end{equation}
for all $1 \leq i, j \leq n$.
Abusing notation, we use $\QQ[\xx_n, \bm{\theta}_n]$ to denote superspace of rank $n$, with the understanding
that the $\theta$-variables anticommute.
The `super'  refers to supersymmetry in physics; the $x$-variables correspond to 
bosons whereas the $\theta$-variables correspond to fermions (see e.g. \cite{PS}).
Extending coefficients to $\RR$, 
superspace is the ring of polynomial-valued differential forms
on Euclidean $n$-space; in this setting the variable $\theta_i$ would be 
more commonly written $dx_i$.
We also have the tensor product model $\QQ[\xx_n, {\bm \theta_n}] = \mathrm{Sym}(V^*) \otimes \bigwedge (V^*)$,
where $V$ is an $n$-dimensional vector space and $V^*$ is its dual space.
Superspace carries a natural bigrading by considering $x$-degree and $\theta$-degree separately.

We endow $\QQ[\xx_n, \bm{\theta}_n]$ with the diagonal action of $\symm_n$:
\begin{equation}
w \cdot x_i := x_{w(i)}, \quad w \cdot \theta_i := \theta_{w(i)} \quad \text{for $w \in \symm_n$ and $1 \leq i \leq n$.}
\end{equation}
This action of $\symm_n$ on $\QQ[\xx_n, \bm{\theta}_n]$ has been used in \cite{BRT, Zabrocki}
to build interesting graded $\symm_n$-modules connected to 
Chern plethysm and delta operators.
We use the last formulation in \eqref{classical-vandermonde-definition}
of the Vandermonde determinant to extend 
Vandermondes to superspace
and construct graded $\symm_n$-modules of our own.
The following superspace elements will be our object of study.

\begin{defn}
\label{supervandermonde-definition}
Let $k, r \geq 0$ with $n = k + r$ and
let $\aaa = (a_1, \dots, a_r)$ be a list of $r$ nonnegative 
integers.  The {\em $\aaa$-superspace Vandermonde} is the following element of 
$\QQ[\xx_n, \bm{\theta}_n]$:
\begin{equation*}
\Delta_n(\aaa) := \varepsilon_n \cdot (x_1^{a_1} x_2^{a_2} \cdots x_r^{a_r} 
x_{r+1}^{k-1} x_{r+2}^{k-2} \cdots x_{n-1}^1 x_n^0 \cdot \theta_1 \theta_2 \cdots \theta_r).
\end{equation*}
\end{defn}

\begin{example}
\label{supervandermonde-example}
 Let $n = 3$.
 Using the anticommutativity of the $\theta$-variables,
\begin{align*}
\Delta_3(1,1) &= 2 x_1 x_2 \theta_1 \theta_2 - 2 x_1 x_3 \theta_1 \theta_3 + 2 x_2 x_3 \theta_2 \theta_3 \\
\Delta_3(2,0) &= x_1^2 \theta_1 \theta_2 + x_2^2 \theta_1 \theta_2 - x_1^2 \theta_1 \theta_3 -
x_3^2 \theta_1 \theta_3 + x_2^2 \theta_2 \theta_3 + x_3^2 \theta_2 \theta_3\\
\Delta_3(1) &= x_1 x_2 \theta_1 - x_1 x_2 \theta_2 - x_1 x_3 \theta_1 - x_2 x_3 \theta_3 
+ x_2 x_3 \theta_2 + x_1 x_3 \theta_3.
\end{align*}
\end{example}

Example~\ref{supervandermonde-example} illustrates that $\Delta_n(\aaa) \in \QQ[\xx_n, \bm{\theta}_n]$
is nonzero even when the sequence $\aaa$ has repeated entries. 
Indeed, the case where $\aaa$ is a constant sequence will be the primary focus of this paper.

Definition~\ref{supervandermonde-definition} specializes to the classical Vandermonde $\Delta_n$
when $\aaa = \varnothing$ is the empty sequence of length zero.  
If $\aaa = (a_1, \dots, a_r)$ is a rearrangement of ${\bf b} = (b_1, \dots, b_n)$,
the anticommutativity of the $\theta$-variables  implies
$\Delta_n(\aaa) = \Delta_n({\bf b})$.

The superpolynomial $\Delta_n(\aaa)$ can be viewed as a
(noncommutative) determinant.  If $A = (A_{i,j})_{1 \leq i, j \leq n}$
is a matrix whose elements lie in $\QQ[\xx_n, \bm{\theta}_n]$, we define
$\det(A) \in \QQ[\xx_n, \bm{\theta}_n]$ by 
\begin{equation}
\det(A) := \sum_{w \in \symm_n} \sign(w) A_{1,w(1)} A_{2, w(2)} \cdots A_{n, w(n)},
\end{equation}
where the terms are multiplied in the specified order.
For $\aaa = (a_1,  \dots,  a_r)$ with $n = k + r$ we have 
\begin{equation}
\Delta_n(\aaa) = 
\det \left( \begin{array}{c c c c}
x_1^{a_1} \theta_1 & x_2^{a_1} \theta_2 & \cdots & x_n^{a_1} \theta_n \\
x_1^{a_2} \theta_1 & x_2^{a_2} \theta_2 & \cdots & x_n ^{a_2} \theta_n \\
 & & \ddots & \\
 x_1^{a_r} \theta_1 & x_2^{a_r} \theta_2 & \cdots & x_n^{a_r} \theta_n  \\
 \\ \hdashline[2pt/2pt] \\
 x_1^{k-1} & x_2^{k-1} & \cdots & x_n^{k-1} \\
 x_1^{k-2} & x_2^{k-2} & \cdots & x_n^{k-2} \\
  & & \ddots & \\
  1 & 1 & \cdots & 1
\end{array} \right).
\end{equation}
The authors are unaware of a superspace extension of the factorization
$\Delta_n = \prod_{1 \leq i < j \leq n} (x_i - x_j)$.

We use an action of partial derivatives on superspace to build  $\symm_n$-modules.
For $1 \leq i \leq n$, let $\partial_i: \QQ[\xx_n, \bm{\theta}_n] \rightarrow \QQ[\xx_n, \bm{\theta}_n]$ be the 
unique linear operator which satisfies 
\begin{equation}
\partial_i(\theta_j) = 0 \quad\text{and}\quad \partial_i(x_j) = \delta_{i,j}
\end{equation}
for all $1 \leq j \leq n$ (where $\delta_{i,j}$ is the Kronecker delta)
together with the Leibniz Rule 
\begin{equation}
\partial_i(fg) = f \partial_i(g) + \partial_i(f) g \quad \text{for all $f, g \in \QQ[\xx_n, \bm{\theta}_n]$.}
\end{equation}
The operator $\partial_i$ is partial differentiation with respect to $x_i$
where the $\theta$-variables are regarded as constants.

\begin{defn}
\label{cone-definition}
Suppose $n = k + r$ for $k, r \geq 0$ and let $\aaa = (a_1, \dots, a_r) \in (\ZZ_{\geq 0})^r$.  
Let $V_n(\aaa)$ be the smallest $\QQ$-linear subspace of $\QQ[\xx_n, \bm{\theta}_n]$ 
containing $\Delta_n(\aaa)$ which is closed under the $n$
partial derivative operators $\partial_1, \dots, \partial_n$.
\end{defn}

Since the superpolynomial $\Delta_n(\aaa)$ is {\em alternating}:
\begin{equation}
w \cdot \Delta_n(\aaa) = \sign(w) \Delta_n(\aaa) \quad \text{for all $w \in \symm_n$}
\end{equation}
the vector space $V_n(\aaa)$
is closed under the action of $\symm_n$.
The space $V_n(\aaa)$ is concentrated in $\theta$-degree $r$ and is a graded vector space with respect
to $x$-degree.
Ignoring the (constant) $\theta$-degree, we regard $V_n(\aaa)$ as a singly graded $\symm_n$-module 
by considering $x$-degree.

The classical Vandermonde $\Delta_n$ gives a model
for the coinvariant ring of $\symm_n$.  For $1 \leq i \leq n$, let $e_d = e_d(\xx_n)$ be the degree
$d$ {\em elementary symmetric polynomial}:
\begin{equation}
e_d := \sum_{1 \leq i_1 < \cdots < i_d \leq n} x_{i_1} \cdots x_{i_d}.
\end{equation}
The {\em invariant ideal} $I_n \subseteq \QQ[\xx_n]$ is the ideal $I_n := \langle e_1, e_2, \dots, e_n \rangle$ 
generated by these polynomials. Equivalently, the ideal $I_n$ is generated by the vector space
$\QQ[\xx_n]^{\symm_n}_+$ of $\symm_n$-invariant polynomials with vanishing constant term.
The {\em coinvariant ring} is the quotient $R_n := \QQ[\xx_n]/I_n$.  The ring $R_n$ has the structure
of a graded $\symm_n$-module.

The coinvariant ring is one of the most important representations in algebraic combinatorics.
Chevalley proved \cite{C} that $R_n \cong \QQ[\symm_n]$ as ungraded $\symm_n$-modules, so that 
$R_n$ gives a graded refinement of the regular representation of $\symm_n$. 
Borel showed \cite{Borel} that $R_n$ presents the cohomology of the variety $\mathcal{F \ell}_n$
of complete flags in $\CC^n$.
If $\aaa = \varnothing$ is the empty sequence so that $\Delta_n(\aaa) = \Delta_n$ is the classical Vandermonde,
we have an isomorphism (see \cite{Bergeron}) of graded $\symm_n$-modules
\begin{equation}
\label{classical-cone-isomorphism}
R_n \cong V_n(\varnothing) = \mathrm{span}_{\QQ} 
\{ \partial_1^{b_1} \cdots \partial_n^{b_n} \Delta_n \,:\,
b_1, \dots, b_n \geq 0 \}.
\end{equation}

The isomorphism \eqref{classical-cone-isomorphism} gives two ways of viewing 
the coinvariant algebra, each with virtues and defects.
The space $R_n = \QQ[\xx_n]/I_n$ has a natural multiplication structure, but as a quotient space,
deciding whether two polynomials $f,g  \in \QQ[\xx_n]$ are equal in $R_n$ can be difficult.
The graded vector
space $V_n(\varnothing)$ is not closed under multiplication, but its elements are honest polynomials (not cosets),
so calculating invariants like dimension is more conceptually straightforward.
In this paper we  use the spaces $V_n(\aaa)$ to extend
\eqref{classical-cone-isomorphism} to a wider class of graded $\symm_n$-modules.
\begin{itemize}
\item If $n = k + r$ and $\aaa = (k-1, \dots, k-1)$ is a length $r$ sequence of $(k-1)$'s, then 
$V_n(\aaa)$ is isomorphic as a graded $\symm_n$-module (up to sign twist and grading reversal) 
to the quotient
$R_{n,k} := \QQ[\xx_n]/I_{n,k}$
where $I_{n,k} \subseteq \QQ[x_1, \dots, x_n]$ is the ideal generated by $x_1^k, x_2^k, \dots, x_n^k$ together
with the top $k$ elementary symmetric polynomials $e_n, e_{n-1}, \dots, e_{n-k+1}$
(Theorem~\ref{delta-vandermonde-theorem}).
The ring $R_{n,k}$ was defined by Haglund, Rhoades, and Shimozono \cite{HRS} in connection with 
{\em Delta Conjecture} \cite{HRW} of Macdonald theory.
\item  If $n = k + r$, $k \leq s$, and $\aaa = (s-1, \dots, s-1)$ is a length $r$ sequence of $(s-1)$'s, then 
$V_n(\aaa)$ is isomorphic (up to sign twist and grading reversal) to a two-parameter family $R_{n,k,s}$ 
of quotient rings defined in \cite{HRS} and further studied from a geometric perspective in \cite{PR}
(Theorem~\ref{higher-constant-sequences}).
\item  If $r \leq n$ and $\aaa = (0, \dots, 0)$ is a length $r$ sequence of zeros, then $V_n(\aaa)$ is isomorphic
(up to sign twist and grading reversal) to the Tanisaki quotient $R_{\lambda}$ corresponding to the hook-shaped
partition $\lambda = (r+1, 1, \dots, 1) \vdash n$
(Proposition~\ref{zero-tanisaki}).
\end{itemize}

The modules $V_n(\aaa)$ of Definition~\ref{cone-definition} are defined using classical partial derivative
operators acting on the commuting $x$-variables.
We introduce the following partial differentiation operators which act on the 
anticommuting $\theta$-variables.  
Given $1 \leq i \leq n$, we define a $\QQ[\xx_n]$-endomorphism of superspace by
\begin{equation}
\partial_i^{\theta}(\theta_{j_1} \cdots \theta_{j_r}) := \begin{cases}
(-1)^{s-1} \theta_{j_1} \cdots \widehat{\theta_{j_s}} \cdots \theta_{j_r} & \text{if $j_s = i$,} \\
0 & \text{if $i \notin \{j_1, \dots, j_r\}$},
\end{cases}
\end{equation}
for any $1 \leq j_1 < \cdots < j_r \leq n$,
where $\widehat{\cdot}$ denotes omission.
The operator $\partial_i^{\theta}$ lowers $\theta$-degree by 1 while leaving $x$-degree unchanged.
We use these operators to build the following class of doubly-graded vector spaces.

\begin{defn}
\label{double-cone-definition}
Suppose $n = k + r$ for $k, r \geq 0$ and let $\aaa = (a_1, \dots, a_r) \in (\ZZ_{\geq 0})^r$.  
Let $W_n(\aaa)$ be the smallest $\QQ$-linear subspace of $\QQ[\xx_n, \bm{\theta}_n]$ 
containing $\Delta_n(\aaa)$ which is closed under the $n$
partial derivative operators $\partial_1, \dots, \partial_n$
as well as the $n$ operators $\partial^{\theta}_1, \dots, \partial^{\theta}_n$.
\end{defn}

The space 
$W_n(\aaa)$  has the structure of a doubly 
graded $\symm_n$-module.
By restricting $W_n(\aaa)$ to the top $\theta$-degree component, we recover the singly
graded module $V_n(\aaa)$.  
In contrast to the spaces $V_n(\aaa)$, there is a natural way to put a ring structure on $W_n(\aaa)$.

It will be shown in Lemma~\ref{basic-partial-lemma}
that the operators $\partial_i$ and $\partial^{\theta}_i$ satisfy the same relations as the generators 
$x_i$ and $\theta_i$ of superspace:
\begin{equation}
\partial_i \partial_j = \partial_j \partial_i, \quad
\partial_i \partial^{\theta}_j = \partial^{\theta}_j \partial_i, \quad
\partial^{\theta}_i \partial^{\theta}_j = - \partial^{\theta}_j \partial^{\theta}_i
\end{equation}
for all $1 \leq i, j \leq n$.
Given $f \in \QQ[\xx_n, {\bm \theta_n}]$, we therefore have a well-defined operator
$\partial(f)$ obtained by replacing each $x_i$ in $f$ with a $\partial_i$ and each
$\theta_i$ in $f$ with a $\partial^{\theta}_i$.
For example, we have 
\begin{equation*}
\partial(x_1^2 \theta_1 \theta_2 - x_1 x_3 \theta_1) = 
\partial_1^2 \partial^{\theta}_1 \partial^{\theta}_2 - \partial_1 \partial_3 \partial^{\theta}_1.
\end{equation*}
We have an action of $\QQ[\xx_n, {\bm \theta}_n]$ on itself given by
$f \cdot g := \partial(f)(g)$.
We use this action to define the following family of bigraded quotient rings.

\begin{defn}
\label{ideal-ring-definition}
Suppose $n = k + r$ for $k, r \geq 0$ and let $\aaa = (a_1, \dots, a_r) \in (\ZZ_{\geq 0})^r$.  
Let $I_n(\aaa) \subseteq \QQ[\xx_n, {\bm \theta_n}]$ be the ideal
\begin{equation}
I_n(\aaa) := \{ f \in \QQ[\xx_n, {\bm \theta_n}] \,:\, f \cdot \Delta_n(\aaa) = 0 \}
\end{equation}
and let
\begin{equation}
R_n(\aaa) := \QQ[\xx_n, {\bm \theta_n}]/I_n(\aaa)
\end{equation}
be the corresponding quotient ring.
\end{defn}

We will show (Corollary~\ref{super-annihilator-twist}) that $R_n(\aaa)$ is isomorphic to $W_n(\aaa)$
as  bigraded $\symm_n$-modules.
The ring $R_n(\aaa)$ enjoys a 2-dimensional kind of duality 
(Theorem~\ref{duality-theorem}, Corollary~\ref{duality-corollary}) which states that
twisting $R_n(\aaa)$ by the sign representation is equivalent to `rotating' its bigrading.
We prove that $R_n(\aaa)$ satisfies a 2-dimensional version of 
Poincar\'e duality (Corollary~\ref{poincare-corollary}) which is implied in the case $\aaa = \varnothing$
by the fact that $\mathcal{F \ell}_n$ is a compact smooth projective complex variety.
We propose the problem of finding a geometric interpretation of the 2-dimensional duality 
satisfied by the rings $R_n(\aaa)$.
We further conjecture a 2-dimensional unimodality property of the bigraded Hilbert series of $R_n(\aaa)$
(Conjecture~\ref{unimodality-conjecture})
which is implied by the Hard Lefschetz Theorem when $\aaa = \varnothing$.

For $k \leq n$, Pawlowski and Rhoades \cite{PR} defined the moduli space $X_{n,k}$ 
of $n$-tuples $(\ell_1, \dots, \ell_n)$ of lines in $\CC^k$ such that $\ell_1 + \cdots + \ell_n = \CC^k$.
This space is homotopy equivalent to $\mathcal{F \ell}_n$ when $k = n$ and is a Zariski open
subset of the $n$-fold product $(\mathbb{P}^{k-1})^n$ of $(k-1)$-dimensional complex projective space
with itself.  Although $X_{n,k}$ is a smooth complex manifold, it is almost never compact and so does
not satisfy the hypotheses of Poincar\'e duality; correspondingly, the Hilbert series of the cohomology ring
 $H^{\bullet}(X_{n,k})$
is not palindromic.
When $\aaa = (k-1, \dots, k-1)$ is a length $n-k$ sequence of $(k-1)$'s, the 
$\theta$-degree zero piece of $R_n(\aaa)$ presents the cohomology $H^{\bullet}(X_{n,k})$. 
The results and conjectures of the previous paragraph suggest that
although $H^{\bullet}(X_{n,k})$ does not satisfy 
desired properties  such as Poincar\'e duality and Hard Lefschetz, 
it is a 1-dimensional slice of a 2-dimensional object that does.

The paper is structured as follows.
In {\bf Section~\ref{Background}}
we give background material related to combinatorics and the representation theory of 
$\symm_n$.
In {\bf Section~\ref{Vandermondes and the Delta Conjecture}}
we calculate the graded isomorphism type of $V_n(\aaa)$ for certain constant
sequences $\aaa$ to give a new model for the coinvariant algebra for the Delta 
Conjecture introduced in \cite{HRS}.
In {\bf Section~\ref{Vandermondes and Other Graded Modules}}
we generalize the results in
Section~\ref{Vandermondes and the Delta Conjecture}
to other constant sequences $\aaa$, giving a Vandermonde model
for the hook-shaped Tanisaki quotients in the process.
We also describe a subspace model for a quotient of superspace introduced in \cite{BRT}
which gives a bigraded refinement of a symmetric group action on positroids.
In {\bf Section~\ref{Antisymmetric differentiation}} we define partial differentiation operators on superspace
with respect to antisymmetric variables, prove the relevant duality result, 
and discuss a possible connection to the superspace coinvariant 
ring.
We close in {\bf Section~\ref{Conclusion}}
with some open problems, including an  extension 
of the module $V_n(\aaa)$ to two sets of commuting variables 
(with one set of skew-commuting variables)
with conjectural doubly graded Frobenius image equal the symmetric function
$\Delta'_{e_{k-1}} e_n$ appearing in the 
Delta Conjecture.

\section{Background}
\label{Background}

\subsection{Combinatorics}
It will often be convenient for us to assert identities up to a nonzero scalar. 
To this end,
suppose $f$ and $g$ are elements of the polynomial ring $\QQ[\xx_n]$ or of superspace
$\QQ[\xx_n, \bm{\theta}_n]$.  
We use the notation 
$f \doteq g$ to indicate that there is a nonzero rational number $a \in \QQ - \{0\}$ such that $f = ag$.

Let $R$ be a ring and let $R[q]$ be the ring of polynomials in $q$ with coefficients in $R$.
Given a polynomial $f = r_d q^d + r_{d-1} q^{d-1} + \cdots + r_1 q + r_0 \in R[q]$ with the $r_i \in R$ and 
$r_d \neq 0$, the {\em $q$-reversal} of $f$ is given by
\begin{equation}
\rev_q f := r_0 q^d + r_1 q^{d-1} \cdots + r_{d-1} q^1 + r_d \in R[q].
\end{equation}

Let $n \geq 0$.
A {\em partition} $\lambda$ of $n$ is a weakly decreasing sequence
$\lambda = (\lambda_1 \geq \cdots \geq \lambda_k)$ of positive integers 
with $\lambda_1 + \cdots + \lambda_k = n$. 
We write $\ell(\lambda) = k$ to indicate the number of parts of $\lambda$ and
$\lambda \vdash  n$ to indicate
that $\lambda$ is a partition of $n$.
For $1 \leq i \leq n$, we write $m_i(\lambda)$ for the multiplicity of $i$ as a part of 
$\lambda$.

We identify a partition $\lambda = (\lambda_1, \dots, \lambda_k)$ with its
{\em Ferrers diagram} consisting of $\lambda_i$ left justified boxes in row $i$.  
The Ferrers diagram of $(3,3,1) \vdash 7$ is shown below.
\begin{small}
\begin{center}
\begin{young}
   & & \cr
   & & \cr
   \cr
\end{young}
\end{center}
\end{small}
Given $\lambda \vdash n$, the {\em conjugate} $\lambda'$ is the partition whose Ferrers
diagram is obtained from that of $\lambda$ by reflection across the main diagonal $y = x$.
For example, we have $(3,3,1)' = (3,2,2)$.

We will make use of the following standard $q$-analog notation.
For $n \geq k \geq 0$ we have the {\em $q$-number, $q$-factorial,} and
{\em $q$-binomial coefficient}:
\begin{equation}
[n]_q := 1 + q + \cdots + q^{n-1}, \quad
[n]!_q := [n]_q [n-1]_q \cdots [1]_q, \quad
{n \brack k}_q := \frac{[n]!_q}{[k]!_q \cdot [n-k]!_q}.
\end{equation}
If $k_1 + \cdots + k_r = n$ , we also have the {\em $q$-multinomial coefficient}
\begin{equation}
{n \brack k_1, \dots, k_r}_q := \frac{[n]!_q}{[k_1]!_q \cdots [k_r]!_q}.
\end{equation}

For $n \geq k \geq 0$, let $\Stir(n,k)$ be the {\em (signless) Stirling number of the second kind}
counting $k$-block set partitions of $[n] := \{1, 2, \dots, n\}$.
The {\em $q$-Stirling number} $\Stir_q(n,k)$ is defined by the recursion
\begin{equation}
\Stir_q(n,k) = \Stir_q(n-1,k-1) + [k]_q \cdot \Stir_q(n-1,k)
\end{equation}
together with the initial condition $\Stir_q(0,k) = \delta_{k,0}$ (Kronecker delta).

An {\em ordered set partition} of $[n]$ is a sequence $(B_1, \dots, B_k)$ 
of nonempty subsets of $[n]$ such that we have the disjoint union
$[n] := B_1 \sqcup \cdots \sqcup B_k$. 
The number of $k$-block ordered set partitions of $[n]$ is
$k! \cdot \Stir(n,k)$.

\subsection{Representation theory}
A {\em supermonomial} in $\QQ[\xx_n, {\bm \theta_n}]$ is a product 
$x_1^{a_1} \cdots x_n^{a_n} \theta_{i_1} \cdots \theta_{i_r}$ for some 
$a_1, \dots, a_n \geq 0$ and $1 \leq i_1 < \cdots < i_r \leq n$.
The {\em $x$-degree} of this supermonomial is $a_1 + \cdots + a_n$ 
and the {\em $\theta$-degree} is $r$.

The family of supermonomials forms a basis for $\QQ[\xx_n, {\bm \theta_n}]$;
we call elements of $\QQ[\xx_n, {\bm \theta_n}]$ 
{\em superpolynomials}.  If $f$ is a superpolynomial, the {\em $x$-degree} of $f$
is the largest $x$-degree of the terms appearing in $f$.  We call $f$
{\em $x$-homogeneous} if all of its terms have the same $x$-degree.
The terms {\em $\theta$-degree} and {\em $\theta$-homogeneous} have analogous
meanings. 
If $f$ is simultaneously $x$-homogeneous and $\theta$-homogeneous, we 
call $f$ {\em homogeneous}.

Let $M$ be a $\QQ[\xx_n]$-module.
For $m \in M$, the {\em annihilator} of $m$ is the subset
\begin{equation}
\ann_{\QQ[\xx_n]}(m) := \{ r \in \QQ[\xx_n] \,:\, r \cdot m = 0 \} \subseteq \QQ[\xx_n].
\end{equation}
The subset $\ann_{\QQ[\xx_n]}(m) \subseteq \QQ[\xx_n]$ is an ideal.

Let $V = \bigoplus_{d \geq 0} V_d$ be a graded vector space with each graded piece $V_d$
finite-dimensional.  The {\em Hilbert series} of $V$ is the  power series
\begin{equation}
\Hilb(V;q) := \sum_{d \geq 0} \dim(V_d) q^d.
\end{equation}

Let $\Lambda$ be the ring of symmetric functions over the ground field $\QQ(q,t)$ in an infinite
variable set $\xx = (x_1, x_2, \dots )$.
The ring $\Lambda$ is graded by degree; we let 
$\Lambda_n$ be the homogeneous piece of degree $n$.

For any $\lambda \vdash n$, we have 
the {\em Schur function} $s_{\lambda} = s_{\lambda}(\xx) \in \Lambda_n$.
The family $\{ s_{\lambda} \,:\, \lambda \vdash n \}$ of all such symmetric functions forms a basis 
for $\Lambda_n$.  
The {\em omega involution} is the linear map $\omega: \Lambda \rightarrow \Lambda$
defined on the Schur basis by $\omega(s_{\lambda}) := s_{\lambda'}$.
It can be shown that $\omega$ is a ring homomorphism.

The irreducible representations of $\symm_n$ over the field $\QQ$ are indexed by partitions of $n$.
If $\lambda \vdash n$, let $S^{\lambda}$ be the corresponding irreducible representation of $\symm_n$.
For example, the trivial representation of $\symm_n$ is $S^{(n)}$ and the sign representation
of $\symm_n$ is $S^{(1^n)}$.

The {\em Frobenius map} gives a relationship between the Schur basis and 
the representation theory of $\symm_n$.
Given any finite-dimensional $\symm_n$-module $V$, there are unique multiplicities $m_{\lambda} \geq 0$
such that 
\begin{equation}
V \cong_{\symm_n} \bigoplus_{\lambda \vdash n} m_{\lambda} S^{\lambda}. 
\end{equation}
The {\em Frobenius image} 
$\Frob(V) \in \Lambda_n$ is the symmetric function 
\begin{equation}
\Frob(V) := \sum_{\lambda \vdash n} m_{\lambda} s_{\lambda}.
\end{equation}

If $V$ is a finite-dimensional $\symm_n$-module and $\sign$ denotes the $1$-dimensional sign representation
of $\symm_n$, the tensor product $\sign \otimes V$ is another $\symm_n$-module. The effect of 
tensoring with the sign representation on Frobenius image is the application of the omega involution, that is
\begin{equation}
\Frob(\sign \otimes V) = \omega(\Frob(V)).
\end{equation}

Most of the modules we consider in this paper will be graded. If $V = \bigoplus_{i \geq 0} V_i$ is a graded 
$\symm_n$-module with each piece $V_i$ finite-dimensional, the {\em graded Frobenius image} of $V$
is the series 
\begin{equation}
\grFrob(V; q) := \sum_{i \geq 0} \Frob(V_i) \cdot q^i.
\end{equation}
Similarly, if $V = \bigoplus_{i, j \geq 0} V_{i,j}$ is a bigraded $\symm_n$-module with each bigraded piece
$V_{i,j}$ finite-dimensional, we set
\begin{equation}
\label{bigraded-frobenius-definition}
\grFrob(V; q,t) := \sum_{i, j \geq 0} \Frob(V_{i,j}) \cdot q^i t^j.
\end{equation}
The bigraded Frobenius image \eqref{bigraded-frobenius-definition}
can be extended to define multigraded Frobenius images $\grFrob(V; q_1, q_2, q_3, \dots )$ 
in the obvious way.

We will need the induction product of symmetric group modules.
Let $G$ be a group and let $H$ be a subgroup of $G$.
If $V$ is a representation of $H$, let $V \uparrow_H^G$ be the
induction of $V$ from $H$ to $G$.
If $V$ is a $\symm_n$-module and $W$ is a $\symm_m$-module, the tensor product
$V \otimes W$ is naturally a $\symm_n \times \symm_m$-module.
Viewing $\symm_n \times \symm_m$ as a subgroup of $\symm_{n + m}$ where
$\symm_n$ acts on the first $n$ letters and $\symm_m$ acts on the last $m$ letters,
the {\em induction product} of $V$ and $W$ is the $\symm_{n + m}$-module
\begin{equation}
V \circ W := (V \otimes W) \uparrow_{\symm_n \times \symm_m}^{\symm_{n + m}}.
\end{equation}
The corresponding effect on Frobenius images is
\begin{equation}
\Frob(V \circ W) = \Frob(V) \cdot \Frob(W).
\end{equation}

For $\lambda \vdash n$, let $\widetilde{H}_{\lambda} = \widetilde{H}_{\lambda}(\xx;q,t)
\in \Lambda_n$ be the associated
{\em modified Macdonald symmetric function}.
As with  Schur functions, the set $\{ \widetilde{H}_{\lambda} \,:\, \lambda \vdash n \}$ 
forms a basis for $\Lambda_n$.

Given any symmetric function $F$, the (primed and unprimed) delta operators 
$\Delta_F, \Delta'_F: \Lambda \rightarrow \Lambda$
are the Macdonald eigenoperators defined by
\begin{align}
&\Delta_F: \widetilde{H}_{\lambda} \mapsto F[B_{\lambda}(q,t)] \cdot \widetilde{H}_{\lambda}  \\
&\Delta'_F: \widetilde{H}_{\lambda} \mapsto F[B_{\lambda}(q,t) - 1] \cdot \widetilde{H}_{\lambda} 
\end{align}
The eigenvalue $F[B_{\lambda}(q,t)] \in \QQ(q,t)$ involved in $\Delta_F$ is the plethyistic shorthand
\begin{equation}
F[B_{\lambda}(q,t)] := F( \dots, q^i t^j, \dots ),
\end{equation}
where $(i, j)$ range over all pairs of nonnegative integers such that $i < \lambda_{j+1}$.
The eigenvalue $F[B_{\lambda}(q,t) - 1] \in \QQ(q,t)$ involved in $\Delta'_F$ has the same definiton
as $F[B_{\lambda}(q,t)]$ except that $1 = q^0 t^0$ does not appear as an argument.
\footnote{The Macdonald eigenoperators $\Delta_F$ and $\Delta'_F$ are not to be confused
with the Vandermonde $\Delta_n$ and its superspace extension $\Delta_n(\aaa)$.}
By linearity, the operators $\Delta_F$ and $\Delta'_F$ extend to operators on the full vector space
$\Lambda$ of symmetric functions.

Let $e_n$ be the degree $n$ elementary symmetric function.
For $k \leq n$, the {\em Delta Conjecture} of Haglund, Remmel, and Wilson \cite{HRW} predicts the monomial 
expansion of the symmetric function $\Delta'_{e_{k-1}} e_n$.
It reads
\begin{equation}
\label{delta-conjecture}
\Delta'_{e_{k-1}} e_n = \Rise_{n,k}(\xx;q,t) = \Val_{n,k}(\xx;q,t),
\end{equation}
where $\Rise$ and $\Val$ are certain formal power series involving the combinatorics of lattice paths.
For more details, see \cite{HRW}.

The Delta Conjecture asserts the equality of three formal power series involving the infinite set of variables
$\xx$ together with the two additional parameters $q$ and $t$.  This conjecture remains open, but is known
to be true when one of these parameters is set to zero. 
Combining results of \cite{GHRY, HRW, HRS, Rhoades, WMultiset} we have
\begin{equation}
\label{five-equality}
\Delta'_{e_{k-1}} e_n \mid_{t = 0} = \Rise_{n,k}(\xx;q,0) = \Rise_{n,k}(\xx;0,q) 
= \Val_{n,k}(\xx;q,0) = \Val_{n,k}(\xx;0,q).
\end{equation}
Let $C_{n,k}(\xx;q)$ be the common symmetric function of Equation~\eqref{five-equality}.

For $\lambda \vdash n$,
we will  need the Hall-Littlewood $Q'$-function $Q'_{\lambda}(\xx;q)$.
This may be defined in terms of the modified Macdonald polynomials by
\begin{equation}
Q'_{\lambda}(\xx;q) := \rev_q \widetilde{H}_{\lambda}(\xx;q,0).
\end{equation}
In the special case $\lambda = (1^n)$, the Hall-Littlewood function
gives the graded isomorphism type of the coinvariant ring $R_n$ attached to $\symm_n$,
up to grading reversal:
\begin{equation}
\grFrob(R_n; q) = \rev_q Q'_{(1^n)}(\xx; q).
\end{equation}

\section{Vandermondes and the Delta Conjecture}
\label{Vandermondes and the Delta Conjecture}

\subsection{Vandermondes and annihilators}
In this paper we will study the graded Frobenius images
 $\grFrob(V_n(\aaa); q)$ for various sequences $\aaa \in (\ZZ_{\geq 0})^r$.
 In order to do this, we use the following action of the polynomial ring $\QQ[\xx_n]$ on superspace
 $\QQ[\xx_n, \bm{\theta}_n]$.
 
 Recall from Section~\ref{Introduction} that we have an action of the partial derivative operator 
 $\partial_i$ on $\QQ[\xx_n, \bm{\theta}_n]$ for each $1 \leq i \leq n$.
 Since $\partial_i \partial_j = \partial_j \partial_i$ for all $1 \leq i, j \leq n$,
 given any polynomial $f \in \QQ[\xx_n]$ we may define $\partial(f)$ to be the 
 differential operator on 
 $\QQ[\xx_n, \bm{\theta}_n]$ obtained from $f$ by replacing each $x_i$ by $\partial_i$. 
 The action of $\QQ[\xx_n]$ on $\QQ[\xx_n, \bm{\theta}_n]$ is given by
 \begin{equation}
 f \cdot g := \partial(f)(g) \quad \text{for all $f \in \QQ[\xx_n]$ and $g \in \QQ[\xx_n, \bm{\theta}_n]$}.
 \end{equation}
 This is related to the action of $\symm_n$ in that
 \begin{equation}
 \label{permutation-compatible}
 w \cdot (f \cdot g) = (w \cdot f) \cdot (w \cdot g)
 \end{equation}
 for all $w \in \symm_n$, $f \in \QQ[\xx_n]$, and $g \in \QQ[\xx_n, \bm{\theta}_n]$.
 
 For any $r \leq n$ and any sequence $\aaa \in (\ZZ_{\geq 0})^r$,
the annihilator
 \begin{equation}
 \ann_{\QQ[\xx_n]} \Delta_n(\aaa) = \{ f \in \QQ[\xx_n] \,:\, f \cdot \Delta_n(\aaa) = 0 \}
 \end{equation}
in $\QQ[\xx_n]$ of the $\aaa$-superspace Vandermonde is an ideal in $\QQ[\xx_n]$.
Since $\Delta_n(\aaa)$ is homogeneous in the $x$-variables, the 
annihilator $\ann_{\QQ[\xx_n]} \Delta_n(\aaa)$ is
homogeneous.
Equation~\eqref{permutation-compatible} 
and the fact that $\Delta_n(\aaa)$ is alternating
imply that $\ann_{\QQ[\xx_n]} \Delta_n(\aaa)$is closed under the action
of $\symm_n$.  The quotient $\QQ[\xx_n]/\ann_{\QQ[\xx_n]} \Delta_n(\aaa)$ therefore has the structure of
a graded $\symm_n$-module.
The graded $\symm_n$-modules $V_n(\aaa)$ and $\QQ[\xx_n]/\ann_{\QQ[\xx_n]} \Delta_n(\aaa)$
are related as follows.

\begin{proposition}
\label{annihilator-twist}  
Let $r, k \geq 0$ with $n = r + k$ and let $\aaa \in (\ZZ_{\geq 0})^r$.  
We have
\begin{equation}
\grFrob(\QQ[\xx_n]/\ann_{\QQ[\xx_n]} \Delta_n(\aaa) ; q) = (\rev_q \circ \omega) \grFrob( V_n(\aaa); q).
\end{equation}
\end{proposition}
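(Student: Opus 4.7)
The plan is to realize the quotient $\QQ[\xx_n]/\ann_{\QQ[\xx_n]} \Delta_n(\aaa)$ and the space $V_n(\aaa)$ as the source and target of a single natural map, and to track how grading and $\symm_n$-action behave under this map. Concretely, define
\begin{equation*}
\phi \colon \QQ[\xx_n] \longrightarrow V_n(\aaa), \qquad \phi(f) := f \cdot \Delta_n(\aaa) = \partial(f)\big(\Delta_n(\aaa)\big).
\end{equation*}
The kernel of $\phi$ is $\ann_{\QQ[\xx_n]} \Delta_n(\aaa)$ by the very definition of the annihilator. Surjectivity is almost tautological: by Definition~\ref{cone-definition}, $V_n(\aaa)$ is spanned by the elements $\partial_1^{b_1}\cdots \partial_n^{b_n} \Delta_n(\aaa) = \phi(x_1^{b_1}\cdots x_n^{b_n})$, so $\phi$ hits a spanning set. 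Hence $\phi$ descends to a $\QQ$-linear isomorphism $\bar\phi: \QQ[\xx_n]/\ann_{\QQ[\xx_n]} \Delta_n(\aaa) \xrightarrow{\sim} V_n(\aaa)$.

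Next I would analyze how $\bar\phi$ interacts with the grading and the $\symm_n$-action. The operator $\partial(f)$ for $f$ homogeneous of $x$-degree $d$ lowers $x$-degree by exactly $d$; since $\Delta_n(\aaa)$ is $x$-homogeneous of some top degree $N = a_1 + \cdots + a_r + \binom{k}{2}$, the map $\bar\phi$ sends the degree-$d$ piece of $\QQ[\xx_n]/\ann_{\QQ[\xx_n]} \Delta_n(\aaa)$ bijectively onto the degree-$(N-d)$ piece of $V_n(\aaa)$. Thus $\bar\phi$ is an isomorphism of graded vector spaces after reversing the grading, which at the level of Hilbert series produces the $\rev_q$ factor.

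For the $\symm_n$-equivariance, the key identity is $w \circ \partial_i = \partial_{w(i)} \circ w$ for all $w \in \symm_n$, which one verifies immediately on generators; iterating gives $w \circ \partial(f) = \partial(w \cdot f) \circ w$ for every $f \in \QQ[\xx_n]$. Combined with the alternating property $w \cdot \Delta_n(\aaa) = \sign(w)\, \Delta_n(\aaa)$, this yields
\begin{equation*}
w \cdot \phi(f) = w \cdot \partial(f)(\Delta_n(\aaa)) = \partial(w \cdot f)(w \cdot \Delta_n(\aaa)) = \sign(w)\, \phi(w \cdot f).
\end{equation*}
Therefore $\bar\phi$ intertwines the $\symm_n$-action up to a sign twist, so that $V_n(\aaa)_{N-d}$ is isomorphic to $\sign \otimes (\QQ[\xx_n]/\ann_{\QQ[\xx_n]} \Delta_n(\aaa))_d$ as $\symm_n$-modules. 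Passing to Frobenius characteristics and recalling that tensoring with $\sign$ corresponds to applying $\omega$, one obtains $\grFrob(V_n(\aaa); q) = (\rev_q \circ \omega)\grFrob(\QQ[\xx_n]/\ann_{\QQ[\xx_n]} \Delta_n(\aaa); q)$; since $\rev_q$ and $\omega$ are commuting involutions, inverting gives the stated identity.

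There is no real obstacle here beyond bookkeeping: the whole argument rests on the trivial intertwiner identity $w \partial_i = \partial_{w(i)} w$ and the alternating property of $\Delta_n(\aaa)$. The only point requiring a bit of care is making sure the grading reversal is taken relative to the correct top $x$-degree $N$ of $\Delta_n(\aaa)$ and not, say, the $\theta$-degree, which is constant on $V_n(\aaa)$ and plays no role in the graded Frobenius image considered here.
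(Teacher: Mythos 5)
Your proposal is correct and follows essentially the same route as the paper's proof: the canonical map $f \mapsto \partial(f)(\Delta_n(\aaa))$, bijectivity from the definitions of $V_n(\aaa)$ and the annihilator, the sign twist from the alternating property of $\Delta_n(\aaa)$, and the degree reversal from differentiation. You simply spell out the bookkeeping (the intertwining identity $w\,\partial_i = \partial_{w(i)}\,w$ and the top degree $N$) in more detail than the paper does.
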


Proposition~\ref{annihilator-twist} is standard, but we include a proof for completeness.

\begin{proof}
The action of $\QQ[\xx_n]$ on $\QQ[\xx_n, \bm{\theta}_n]$ gives a canonical map 
$\varphi: \QQ[\xx_n]/\ann_{\QQ[\xx_n]} \Delta_n(\aaa) \rightarrow V_n(\aaa)$:
\begin{equation}
\varphi: f  \mapsto f \cdot \Delta_n(\aaa) = \partial(f)(\Delta_n(\aaa)).
\end{equation}
The definitions of $V_n(\aaa)$ and $\ann_{\QQ[\xx_n]} \Delta_n(\aaa)$ guarantee that $\varphi$ is well-defined
and bijective.  Since $\Delta_n(\aaa)$ is an alternant, for $w \in \symm_n$ we have
\begin{equation}
\varphi(w \cdot f) = (w \cdot f) \cdot \Delta_n(\aaa) = \sign(w) (w \cdot f) \cdot (w \cdot \Delta_n(\aaa)) =
\sign(w) w \cdot \varphi(f),
\end{equation}
so that $\varphi$ twists by the sign representation. The degree reversal comes from the fact that $\varphi$
is defined using an action of partial derivatives.
\end{proof}

\subsection{A vanishing lemma}
Proposition~\ref{annihilator-twist} is our basic tool for identifying the graded modules $V_n(\aaa)$.
Our first example is inspired by the Delta Conjecture.

For positive integers $k \leq n$, following \cite{HRS} we define an
ideal $I_{n,k} \subseteq \QQ[\xx_n]$ by
\begin{equation}
I_{n,k} := \langle x_1^k, x_2^k, \dots, x_n^k, e_n, e_{n-1}, \dots, e_{n-k+1} \rangle
\end{equation} 
and let 
\begin{equation}
R_{n,k} := \QQ[\xx_n]/I_{n,k} 
\end{equation}
be the corresponding quotient.
The ring $R_{n,k}$ specializes to the classical coinvariant ring $R_n = \QQ[\xx_n]/I_n$
when $k = n$ and plays the role of
 the coinvariant ring for the Delta Conjecture:
 Haglund, Rhoades, and Shimozono proved \cite{HRS, HRS2} that 
\begin{equation}
\label{r-graded-frobenius}
\grFrob(R_{n,k}; q) = (\rev_q \circ \omega) C_{n,k}(\xx;q) = 
(\rev_q \circ \omega) \Delta'_{e_{k-1}} e_n \mid_{t = 0}.
\end{equation}
On the geometric side,
Pawlowski and Rhoades \cite{PR} showed that $R_{n,k}$ presents the cohomology of the space 
\begin{equation}
\label{x-variety-definition}
X_{n,k} := \{ (\ell_1, \dots, \ell_n) \,:\, \text{$\ell_i$ is a $1$-dimensional subspace of $\CC^k$ and
$\ell_1 + \cdots + \ell_n = \CC^k$} \}.
\end{equation}
of spanning configurations of $n$ lines in $\CC^k$.

Equation~\eqref{r-graded-frobenius} says that 
$R_{n,k}$ has graded Frobenius characteristic equal to the $t = 0$ Delta Conjecture
{\bf upon applying the twist $\rev_q \circ \omega$}.  Our first main result
(Theorem~\ref{delta-vandermonde-theorem} below) uses
superspace Vandermondes to remove this twist.
If $\aaa = (k-1, \dots, k-1)$ is a length $r$ sequence of $(k-1)$'s and $k + r = n$ we will show that
\begin{equation}
\label{first-goal-equation}
\grFrob(V_n(\aaa); q) = C_{n,k}(\xx;q) = \Delta'_{e_{k-1}} e_n \mid_{t = 0}.
\end{equation}

Thanks to Proposition~\ref{annihilator-twist},
Equation~\eqref{r-graded-frobenius}, and the definition of $I_{n,k}$,
Equation~\eqref{first-goal-equation} is equivalent to the assertion
\begin{equation}
\label{ideal-equality-goal}
I_{n,k} = \ann_{\QQ[\xx_n]} \Delta_n(\aaa)
\end{equation}
for the sequence $\aaa = (k-1, \dots, k-1) \in (\ZZ_{\geq 0})^r$. 
Our basic tool in proving \eqref{ideal-equality-goal} is 
the following lemma, which gives elements in $I_n(\aaa)$ for any 
sequence $\aaa \in (\ZZ_{\geq 0})^r$.

\begin{lemma}
\label{sign-reversing-lemma}
Let $k, r$ be nonnegative integers with $k + r = n$ and let $\aaa = (a_1, \dots, a_r) \in (\ZZ_{\geq 0})^r$ 
be an arbitrary length
$r$ sequence of nonnegative integers. The top $k$ elementary symmetric polynomials
$e_n, e_{n-1}, \dots, e_{n-k+1} \in \QQ[\xx_n]$ lie in the annihilator $\ann_{\QQ[\xx_n]} \Delta_n(\aaa)$.
\end{lemma}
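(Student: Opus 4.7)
The plan rests on one classical vanishing fact together with a partial anti-symmetrization of $\Delta_n(\aaa)$. The fact: for every $m \geq 1$ and $k \geq 1$, one has $\partial(e_m) \cdot \Delta_k = 0$, where $\Delta_k = \prod_{1 \leq i < j \leq k}(x_i - x_j)$ is the Vandermonde in $k$ variables. Indeed, $\partial(e_m)$ commutes with the $\symm_k$-action (since $e_m$ is symmetric), so $\partial(e_m) \Delta_k$ is $\symm_k$-alternating; but its $x$-degree equals $\binom{k}{2} - m < \binom{k}{2}$, strictly smaller than the minimum $x$-degree of any nonzero alternant in $k$ variables, forcing it to vanish.

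Set $T := \{1, \ldots, r\}$ and $T^c := \{r+1, \ldots, n\}$, and let $\xx_T$ and $\xx_{T^c}$ denote the corresponding variable sets. Write the monomial for $\Delta_n(\aaa)$ as $m = m_1 \cdot m_2 \cdot \theta_T$, where $m_1 := x_1^{a_1} \cdots x_r^{a_r} \in \QQ[\xx_T]$, $m_2 := x_{r+1}^{k-1} x_{r+2}^{k-2} \cdots x_{n-1} \in \QQ[\xx_{T^c}]$, and $\theta_T := \theta_1 \cdots \theta_r$. Choose a system $U$ of left coset representatives for the parabolic subgroup $\symm_{T^c} \subseteq \symm_n$, so that $\varepsilon_n = \bigl(\sum_{u \in U} \sign(u)\, u\bigr) \cdot \varepsilon_{T^c}$ inside $\QQ[\symm_n]$. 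The subgroup $\symm_{T^c}$ fixes $m_1$ and $\theta_T$ pointwise, while $\varepsilon_{T^c} \cdot m_2 = \Delta_{T^c}$ is the Vandermonde in $\xx_{T^c}$; anti-symmetrizing first over $\symm_{T^c}$ therefore gives
\begin{equation*}
\Delta_n(\aaa) = \sum_{u \in U} \sign(u) \cdot u \cdot \bigl(m_1 \cdot \Delta_{T^c} \cdot \theta_T\bigr).
\end{equation*}

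Since $e_j$ is symmetric, $\partial(e_j)$ commutes with each $u$, so it is enough to prove $\partial(e_j) \bigl(m_1 \cdot \Delta_{T^c} \cdot \theta_T\bigr) = 0$ for $j \geq n - k + 1$. The splitting $e_j(\xx_n) = \sum_{s + t = j} e_s(\xx_T) \, e_t(\xx_{T^c})$, the commutation of $\partial$-operators on the disjoint variable sets $\xx_T$ and $\xx_{T^c}$, and the fact that $\theta_T$ is inert under the $\partial_i$ together yield, via Leibniz,
\begin{equation*}
\partial(e_j)\bigl(m_1 \Delta_{T^c} \theta_T\bigr) = \theta_T \sum_{s + t = j} \bigl(\partial(e_s(\xx_T)) \, m_1\bigr) \cdot \bigl(\partial(e_t(\xx_{T^c})) \, \Delta_{T^c}\bigr).
\end{equation*}
By the first paragraph, $\partial(e_t(\xx_{T^c}))\Delta_{T^c} = 0$ for every $t \geq 1$, so only the $(s, t) = (j, 0)$ summand can contribute, leaving $\theta_T \cdot \Delta_{T^c} \cdot \partial(e_j(\xx_T)) \, m_1$. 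Since $\xx_T$ contains only $r$ variables and $j \geq n - k + 1 = r + 1 > r$, we have $e_j(\xx_T) = 0$ identically, so this surviving term also vanishes. The only point requiring mild care is the sign-tracking in the partial anti-symmetrization, which is clean here because $\symm_{T^c}$ permutes only indices outside $T$ and therefore fixes $\theta_T$ with no anticommutation signs intruding.
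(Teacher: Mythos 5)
Your proof is correct, and it takes a genuinely different route from the paper's. The paper also begins by commuting $\partial(e_d)$ past $\varepsilon_n$, but then it expands $\partial(e_d)$ applied to the seed supermonomial combinatorially, as a sum of weights of ``$d$-dotted $\aaa$-staircases'' (diagrams with one dot per chosen column), and kills each term by observing that when $d > r$ some dot must land in a $\theta$-free column, which forces two $\theta$-free columns to carry equal exponents and hence forces $\varepsilon_n \cdot \wt(\sigma) = 0$. You instead factor $\varepsilon_n$ through the parabolic subgroup $\symm_{T^c}$ to rewrite $\Delta_n(\aaa)$ as a signed sum of translates of $m_1 \cdot \Delta_{T^c} \cdot \theta_T$, and then reduce everything to two classical facts: $\partial(e_t(\xx_{T^c}))\Delta_{T^c} = 0$ for $t \geq 1$ (a degree count on alternants), and $e_j(\xx_T) = 0$ for $j > r$, glued by the splitting $e_j(\xx_n) = \sum_{s+t=j} e_s(\xx_T)\, e_t(\xx_{T^c})$. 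All the steps check out, including the coset factorization $\varepsilon_n = \bigl(\sum_{u}\sign(u)\,u\bigr)\varepsilon_{T^c}$ and the Leibniz computation across the disjoint variable sets (the $\theta$-factor is genuinely inert since $\symm_{T^c}$ fixes it and the $\partial_i$ treat it as a constant). The trade-off: your argument is more structural and isolates exactly where the bound $d > n-k$ enters ($e_j$ of $r$ variables vanishes for $j > r$), whereas the paper's staircase model is more self-contained and is explicitly reused later (in the proof of Proposition~\ref{zero-tanisaki}, where the same diagrams show that \emph{all} of $e_1, \dots, e_n$ annihilate $\Delta_n(\mathbf{0})$); note that your decomposition handles that case just as cleanly, since for $\aaa = (0,\dots,0)$ one has $\partial(e_s(\xx_T))m_1 = 0$ for all $s \geq 1$ as well.
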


\begin{proof}
We need to check that $e_d \cdot \Delta_n(\aaa) = \partial(e_d) \Delta_n(\aaa) = 0$ for 
$n - k < d \leq n$.  We describe a combinatorial procedure for applying the differential operator
$\partial(e_d)$ to $\Delta_n(\aaa)$ for any $1 \leq d \leq n$.

Given $w \in \symm_n$ and $1 \leq i \leq n$, we have the following identity of operators on
$\QQ[\xx_n, {\bm \theta}_n]$:
\begin{equation}
\partial_{w(i)} \cdot w =  w \cdot \partial_i
\end{equation}
For $1 \leq d \leq n$, since $e_d \in \QQ[\xx_n]$ is a symmetric polynomial we have
\begin{equation}
\partial(e_d) \cdot w = w \cdot \partial(e_d)
\end{equation}
which implies
\begin{equation}
\label{commuting-equation}
\partial(e_d) \cdot \varepsilon_n = \varepsilon_n \cdot \partial(e_d)
\end{equation}
and therefore
\begin{align}
\partial(e_d)  \cdot \Delta_n(\aaa) &= 
\partial(e_d) \cdot \varepsilon_n \cdot  (x_1^{a_1} \cdots x_r^{a_r}  x_{r+1}^{k-1} \cdots x_{n-1}^1 x_n^0 
 \cdot \theta_1 \cdots \theta_r)  \\ &=
\varepsilon_n \cdot  \partial(e_d) \cdot  (x_1^{a_1} \cdots x_r^{a_r}  x_{r+1}^{k-1} \cdots x_{n-1}^1 x_n^0 
\cdot \theta_1 \cdots \theta_r).
\end{align}
We describe the application of $\partial(e_d)$ to
$ (x_1^{a_1} \cdots x_r^{a_r}  x_{r+1}^{k-1} \cdots x_{n-1}^1 x_n^0  \cdot \theta_1 \cdots \theta_r)$
combinatorially.

The supermonomial $x_1^{a_1}  \cdots x_r^{a_r}  x_{r+1}^{k-1} \cdots x_{n-1}^1 x_n^0 \cdot \theta_1 \cdots \theta_r$
is modeled by a diagram with $n$ columns of boxes. 
For $1 \leq i \leq r$, the $i^{th}$ column (from left to right) contains a box with a $\theta$ at the bottom,
with $a_i$ empty boxes on top.
For $r+1 \leq i \leq n$, the $i^{th}$ column consists of $n-i$ empty boxes.
The case $n = 7, r = 3, \aaa = (4,4,1)$ is shown below.
We refer to this diagram as the {\em $\aaa$-staircase}.

\begin{center}
\begin{young}
 & &, &, &, &, \cr
 & &,  &, &, &, \cr
 & &, & &, & ,\cr
 & & & & &, \cr
$\theta$ & $\theta$ & $\theta$ & & & \cr
,1 & ,2 & ,3 & ,4 & ,5 & ,6 &,7
\end{young}
\end{center}

For $d \geq 0$, a {\em $d$-dotted $\aaa$-staircase} is an $\aaa$-staircase in which $d$ of the boxes
are marked with a $\bullet$, with no two marked boxes in the same column.
An example with $d = 4$ is shown below.

\begin{center}
\begin{young}
 & &, &, &, &, \cr
 & &,  &, &, &, \cr
 $\bullet$ & &, & &, & ,\cr
 & & $\bullet$ &  & $\bullet$&, \cr
$\theta$ & $\theta$ & $\theta$ &  $\bullet$ & & \cr
,1 & ,2 & ,3 & ,4 & ,5 & ,6 &,7
\end{young}
\end{center}

Let $\sigma$ be a $d$-dotted $\aaa$-staircase.  The {\em weight} $\wt(\sigma) \in \QQ[\xx_n, {\bm \theta}_n]$
of $\sigma$ is the  supermonomial
$x_1^{b_1} \cdots x_n^{b_n} \cdot \theta_1 \cdots \theta_r$, where
$b_i$ is the number of empty boxes in column $i$.
In the above example, we have
$\wt(\sigma) = x_1^3 x_2^4 x_4^2 x_5 x_6 \cdot \theta_1 \theta_2 \theta_3$.
It should be clear that 
\begin{equation}
\label{vanishing-sum}
\partial(e_d) \cdot x_1^{a_1}  \cdots x_r^{a_r}  x_{r+1}^{k-1} \cdots x_{n-1}^1 x_n^0 \cdot \theta_1 \cdots \theta_r =
\sum_{\sigma} \wt(\sigma),
\end{equation}
where the sum is over all $d$-dotted $\aaa$-staircases $\sigma$.

In order to calculate $\partial(e_d) \cdot \Delta_n(\aaa)$, we apply $\varepsilon_n$ to both sides of 
Equation~\eqref{vanishing-sum}. By Equation~\eqref{commuting-equation}, this yields
\begin{equation}
e_d \cdot \Delta_n(\aaa) = \partial(e_d) \cdot \Delta_n(\aaa) = \sum_{\sigma} \varepsilon_n \cdot \wt(\sigma).
\end{equation}
Let $\sigma$ be a $d$-dotted $\aaa$-staircase.  If any column of $\sigma$ contains a $\bullet$ but no
$\theta$, there will be two $\theta$-free columns of $\sigma$ with the same number of empty boxes
so that $\varepsilon_n \cdot \wt(\sigma) = 0$.
If $d > n - k$, any $d$-dotted $\aaa$-staircase $\sigma$ has a $\bullet$
in a $\theta$-free column so that $e_d \cdot \Delta_n(\aaa) = 0$.
\end{proof}

\begin{remark}
If we let $X_{n,k}$ be the variety \eqref{x-variety-definition} 
of spanning configurations of $n$ lines $(\ell_1, \dots, \ell_n)$ in $\CC^k$ and let 
$\ell_i^* \twoheadrightarrow X_{n,k}$
be the $i^{th}$ tautological line bundle for $1 \leq i \leq n$, we can identify the variable $x_i$
with the Chern class $x_i := c_1(\ell_i^*) \in H^2(X_{n,k})$.  The Whitney Sum Formula
can be used (see \cite{PR}) to deduce that the top $k$ elementary symmetric polynomials 
$e_n, e_{n-1}, \dots, e_{n-k+1}$ in the $x_i$ vanish in $H^{\bullet}(X_{n,k}; \QQ)$.
Since we have the identification $H^{\bullet}(X_{n,k}; \QQ) = R_{n,k}$ (see \cite{PR}),
this gives geometric intuition for why $e_n, e_{n-1}, \dots, e_{n-k+1}$ `should' lie in the ideal $I_{n,k}$.

Assuming Equation~\eqref{r-graded-frobenius}, Lemma~\ref{sign-reversing-lemma}
gives algebraic and combinatorial
intuition coming from superspace
 for why the elementary symmetric polynomials $e_n, e_{n-1}, \dots, e_{n-k+1}$
`should' lie in the ideal $I_{n,k}$ whose corresponding quotient models the Delta Conjecture at $t = 0$.
\end{remark}

\subsection{A Vandermonde model for $C_{n,k}$}
Our goal is the equality of ideals \eqref{ideal-equality-goal}.
Lemma~\ref{sign-reversing-lemma} gives one of the containments right away.

\begin{lemma}
\label{upper-bound-lemma}
Let $k, r$ be nonnegative integers with $k + r = n$.    Let $\aaa = (k-1, \dots, k-1) \in (\ZZ_{\geq 0})^r$.
Then $I_{n,k} \subseteq \ann_{\QQ[\xx_n]} \Delta_n(\aaa)$.
\end{lemma}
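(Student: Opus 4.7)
The plan is to verify that each generator of $I_{n,k}$ lies in $\ann_{\QQ[\xx_n]} \Delta_n(\aaa)$; since the annihilator is an ideal, this suffices. The generators of $I_{n,k}$ come in two families: the top elementary symmetric polynomials $e_n, e_{n-1}, \dots, e_{n-k+1}$ and the pure powers $x_1^k, x_2^k, \dots, x_n^k$. The first family is dispatched immediately by Lemma~\ref{sign-reversing-lemma}, which already handles \emph{arbitrary} sequences $\aaa \in (\ZZ_{\geq 0})^r$.

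For the second family, I would exploit the fact that when $\aaa = (k-1, \dots, k-1)$, the seed supermonomial
\begin{equation*}
\mu := x_1^{k-1} x_2^{k-1} \cdots x_r^{k-1} \cdot x_{r+1}^{k-1} x_{r+2}^{k-2} \cdots x_{n-1}^1 x_n^0 \cdot \theta_1 \cdots \theta_r
\end{equation*}
has every $x$-exponent lying in the set $\{0, 1, \dots, k-1\}$. Applying a permutation $w \in \symm_n$ simply permutes the exponents among the variables, so the same bound holds for every summand of $\Delta_n(\aaa) = \sum_{w \in \symm_n} \sign(w)\, w(\mu)$. Consequently the operator $\partial_i^k = \partial(x_i^k)$ kills each summand individually, hence $x_i^k \cdot \Delta_n(\aaa) = 0$ for every $1 \leq i \leq n$.

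Combining these two observations shows that every generator of $I_{n,k}$ lies in $\ann_{\QQ[\xx_n]} \Delta_n(\aaa)$, and the containment $I_{n,k} \subseteq \ann_{\QQ[\xx_n]} \Delta_n(\aaa)$ follows. There is no real obstacle here: the pure-power generators handle themselves by a crude exponent bound specific to the constant sequence $\aaa = (k-1, \dots, k-1)$, and the elementary-symmetric generators are exactly the content of the previous lemma. The nontrivial work lies in the reverse containment (needed to actually identify $V_n(\aaa)$ with $R_{n,k}$ up to twist), which is not claimed by the present lemma.
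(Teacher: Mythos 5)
Your proposal is correct and follows essentially the same route as the paper: the elementary symmetric generators are handled by Lemma~\ref{sign-reversing-lemma}, and the pure powers $x_i^k$ are killed because every $x$-exponent in $\Delta_n(\aaa)$ is at most $k-1$ for the constant sequence $\aaa = (k-1,\dots,k-1)$. Your extra remark that permutations merely permute the exponents is a harmless elaboration of the paper's one-line observation.
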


\begin{proof}
It suffices to show that the generators of the ideal $I_{n,k} \subseteq \QQ[\xx_n]$ annihilate the 
$\aaa$-superspace Vandermonde
\begin{equation}
\Delta_n(\aaa) = \varepsilon_n \cdot (x_1^{k-1}  \cdots x_r^{k-1} x_{r+1}^{k-1} \cdots x_{n-1}^1 x_n^0
\cdot \theta_1 \cdots \theta_r).
\end{equation}
Since no $x$-variable appearing in $\Delta_n(\aaa)$ has exponent $\geq k$, we see immediately that
\begin{equation}
x_i^k \cdot \Delta_n(\aaa) = \partial_i^k \Delta_n(\aaa) = 0
\end{equation}
for all $1 \leq i \leq n$.
The remaining generators of $I_{n,k}$ are handled by Lemma~\ref{sign-reversing-lemma}.
\end{proof}

Lemma~\ref{upper-bound-lemma} proves that the module $V_n(\aaa)$ is not too large; it yields a surjection of 
vector spaces
 $R_{n,k} = \QQ[\xx_n]/I_{n,k} \twoheadrightarrow \QQ[\xx_n]/\ann_{\QQ[\xx_n]} \Delta_n(\aaa) \cong  V_n(\aaa)$.  
Our next task is to show that $V_n(\aaa)$ is not too small. To do this, we  need some facts about $R_{n,k}$.

 Haglund, Rhoades, and Shimozono \cite{HRS} proved that $\dim(R_{n,k}) = k! \cdot \Stir(n,k)$, the number 
of $k$-block ordered set partitions of $[n]$.
There are a number of vector space bases of $R_{n,k}$ which are indexed by ordered set partitions
\cite{HRS, PR}: we describe the {\em substaircase monomial basis} here.

Recall that a {\em shuffle} of two sequences $(a_1, \dots, a_r)$ and $(b_1, \dots, b_s)$ is an interleaving
$(c_1, \dots, c_{r+s})$ which preserves the relative orders of the $a$'s and the $b$'s.
If $k + r = n$, and  
{\em $(n,k)$-staircase} is a shuffle of the sequences $(k-1, k-1, \dots, k-1)$ ($r$ times) and 
$(k-1, k-2, \dots, 1, 0)$.
For example, the $(5,3)$-staircases are the shuffles of $(2,2)$ and $(2,1,0)$:
\begin{equation*}
(2,2,2,1,0), \, (2, 2, 1, 2, 0), \, (2, 2, 1, 0, 2), \,  (2, 1, 2, 2, 0), \,  (2, 1, 2, 0, 2), \text{ and } (2,1,0,2,2).
\end{equation*}

A sequence $(c_1, \dots, c_n)$ of nonnegative integers is called {\em $(n,k)$-substairase} if it is
componentwise $\leq$ at lease one $(n,k)$-staircase.
For example, the sequence $(2,0,2,1,0)$ is $(5,3)$-substaircase since we have the 
componentwise inequality $(2,0,2,1,0) \leq (2,2,1,0,2)$.
It is shown in \cite[Thm. 4.13]{HRS} that 
\begin{equation}
\{ x_1^{c_1} \cdots x_n^{c_n} \,:\, (c_1, \dots, c_n) \text{ is $(n,k)$-substaircase} \}
\end{equation}
descends to a vector space basis of $R_{n,k}$.
\footnote{Strictly speaking, \cite[Thm. 4.13]{HRS} states that  the set
$\{ x_n^{c_1} \cdots x_1^{c_n} \,:\, (c_1, \dots, c_n) \text{ is $(n,k)$-substaircase} \}$
of `reversed' monomials descends to a basis of $R_{n,k}$, but since $R_{n,k}$ is
an $\symm_n$-module this is equivalent.}
In particular, 
\begin{equation}
\label{substaircase-count}
\text{there are  $k! \cdot \Stir(n,k)$ sequences $(c_1, \dots, c_n)$
which are $(n,k)$-substaircase.}
\end{equation}

The proof of \eqref{substaircase-count} in \cite{HRS} was recursive and rather involved.
A bijective proof of \eqref{substaircase-count} 
involving an extension of Lehmer code from permutations to
ordered set partitions was given in \cite{RW}.
We will use  substaircase monomials to show that $\dim V_n(\aaa)$ is not too small.

\begin{lemma}
\label{lower-bound-lemma}
Let $k, r \geq 0$ with $k + r = n$ and let $\aaa = (k-1, \dots, k-1) \in (\ZZ_{\geq 0})^r$.
We have $\dim V_n(\aaa) \geq k! \cdot \Stir(n,k)$.
\end{lemma}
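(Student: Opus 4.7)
The plan is to exhibit $k! \cdot \Stir(n,k)$ linearly independent elements of $V_n(\aaa)$; by \eqref{substaircase-count} this is exactly the number of $(n,k)$-substaircase sequences, suggesting the natural candidates $\partial(x^c) \cdot \Delta_n(\aaa) = \partial_1^{c_1} \cdots \partial_n^{c_n} \Delta_n(\aaa)$, one for each substaircase $c = (c_1, \dots, c_n)$. Lemma~\ref{upper-bound-lemma} gives a surjection $R_{n,k} \twoheadrightarrow V_n(\aaa)$ sending the class of $x^c$ to $\partial(x^c) \cdot \Delta_n(\aaa)$, and since the substaircase monomials form a basis of $R_{n,k}$ by \cite[Thm.~4.13]{HRS}, linear independence of $\{\partial(x^c) \cdot \Delta_n(\aaa)\}$ will force this surjection to be an isomorphism and prove the lemma.

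My first step is to expand $\Delta_n(\aaa)$ by $\theta$-content. Grouping the $n!$ permutations in $\varepsilon_n \cdot m$ according to $T := \{w(1), \dots, w(r)\}$ and summing separately over the $r!$ orderings of $T$ and the $k!$ orderings of $[n] \setminus T$, one obtains
\begin{equation*}
\Delta_n(\aaa) \doteq \sum_{\substack{T \subseteq [n] \\ |T| = r}} \varepsilon_T \cdot \Big( \prod_{i \in T} x_i^{k-1} \Big) \cdot \Delta_{[n] \setminus T}(\xx) \cdot \theta_T,
\end{equation*}
where $\theta_T$ is the increasing product of the relevant $\theta$-variables, $\Delta_{[n] \setminus T}(\xx) := \prod_{i < j;\, i, j \in [n] \setminus T}(x_i - x_j)$ is the classical Vandermonde in the remaining variables, and $\varepsilon_T \in \{\pm 1\}$ is an explicit sign. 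Writing $f_T := (\prod_{i \in T} x_i^{k-1}) \cdot \Delta_{[n] \setminus T}(\xx)$ and using that distinct $\theta_T$'s are $\QQ[\xx_n]$-linearly independent, a linear relation $\sum_c a_c \partial(x^c) \Delta_n(\aaa) = 0$ becomes the system $\sum_c a_c \partial(x^c) f_T = 0$ for every $r$-subset $T$.

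For each substaircase $c$ I would select a canonical $k$-subset $U_c \subseteq [n]$ with $c \leq \sigma_{U_c}$, where $\sigma_{U_c}$ is the staircase whose value equals $k-1$ at each position outside $U_c$ and equals $k-j$ at the $j$th smallest element $u_j$ of $U_c$; set $T_c := [n] \setminus U_c$ and define the target supermonomial $\mu_c := x^{\sigma_{U_c} - c} \cdot \theta_{T_c}$. Because $\prod_{i \in T_c} x_i^{k-1}$ and $\Delta_{U_c}(\xx)$ live in disjoint variable sets, Leibniz decouples, and the $T = T_c$ summand of $\partial(x^c) \Delta_n(\aaa)$ contains $\mu_c$ with a nonzero coefficient coming from pairing $\partial(x^c)$ against the leading monomial $\prod_j x_{u_j}^{k-j}$ of $\Delta_{U_c}(\xx)$.

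The heart of the argument is then an upper-triangularity claim: for a suitable total order on substaircase sequences, the coefficient of $\mu_c$ in $\partial(x^{c'}) \Delta_n(\aaa)$ is zero whenever $c' \neq c$ is substaircase with $|c'| = |c|$ (or vanishes unless $c' \prec c$). This immediately forces linear independence. The main obstacle is choosing the canonical $U_c$ and the compatible total order so that this triangularity actually holds: one must rule out contributions to $\mu_c$ coming from summands $T \neq T_c$, as well as from sub-leading monomials of the Vandermonde $\Delta_{U_c}(\xx)$ combined with non-canonical $c'$. A natural candidate order uses the extension of Lehmer code from permutations to ordered set partitions developed in \cite{RW}, which encodes each substaircase by an ordered set partition and so provides a combinatorial handle on the required bookkeeping.
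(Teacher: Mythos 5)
Your overall strategy is the one the paper uses: exhibit the elements $\partial_1^{c_1}\cdots\partial_n^{c_n}\Delta_n(\aaa)$ indexed by $(n,k)$-substaircases and prove they are linearly independent, which by \eqref{substaircase-count} gives the dimension bound. Your opening reduction --- expanding $\Delta_n(\aaa)$ by $\theta$-content as $\sum_T \pm\bigl(\prod_{i\in T}x_i^{k-1}\bigr)\Delta_{[n]\setminus T}\,\theta_T$ and decoupling the relation over the $\theta$-monomials $\theta_T$ --- is correct and is a reasonable organizing device (note that it already disposes of your first worry: a contribution to $\mu_c = x^{\sigma_{U_c}-c}\theta_{T_c}$ can only come from the $T=T_c$ summand, since all other summands carry a different $\theta$-monomial). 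However, the proof has a genuine gap exactly where you flag "the main obstacle": the upper-triangularity claim is asserted, not proved. You have not specified the canonical choice of $U_c$, the total order on substaircases, or the argument ruling out contributions to $\mu_c$ from the sub-leading monomials of $\Delta_{U_c}$ paired with other substaircases $c'$ satisfying $c'|_{T_c}=c|_{T_c}$. Since the entire content of the lemma is this linear independence, and the triangularity is its entire engine, what remains is not bookkeeping but the actual proof.

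For comparison, the paper closes this gap differently and avoids choosing a canonical $U_c$ or a global triangular order. Assuming a nontrivial relation $\sum_c \gamma_c\,\partial^c\Delta_n(\aaa)=0$, it selects a substaircase $d$ with $\gamma_d\neq 0$ that is first of minimal total degree and then lexicographically least, picks any $(n,k)$-staircase $d'\geq d$, and applies the single complementary operator $\partial_1^{p_1}\cdots\partial_n^{p_n}$ with $p=d'-d$ to the whole relation. Every term either dies (if $c+p$ is not a rearrangement of the staircase multiset, or by degree/lex minimality) or becomes, up to nonzero scalar, one member of an explicitly linearly independent family of pure $\theta$-degree-$r$ elements of the form $\theta_{s_1}\cdots\theta_{s_r}+\sum_j(-1)^j\theta_1\theta_{s_1}\cdots\widehat{\theta_{s_j}}\cdots\theta_{s_r}$ (independent because the term free of $\theta_1$ identifies the subset). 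Lex-minimality of $d$ forces $c=d$ to be the only term landing on its distinguished basis element, yielding $\gamma_d=0$, a contradiction. If you want to complete your version, you would need to supply the order and the vanishing statement you postulate; otherwise, adopting the paper's "differentiate down to $\theta$-degree $r$ and $x$-degree $0$" reduction is the cleanest way to finish.
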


\begin{proof}
It is enough to exhibit $k! \cdot \Stir(n,k)$ linearly independent elements of the vector space $V_n(\aaa)$.
Thanks to \eqref{substaircase-count} it is enough to show that 
\begin{equation}
\{ \partial_1^{c_1} \cdots \partial_n^{c_n} \Delta_n(\aaa) \,:\, (c_1, \dots, c_n) \text{ is $(n,k)$-substaircase} \}
\subseteq V_n(\aaa)
\end{equation}
is linearly independent.  We begin with the following seemingly weaker claim.

\noindent
{\bf Claim:} {\em  
 The family of ${n-1 \choose r}$ superpolynomials
\begin{equation}
\label{theta-polynomial-set}
\left\{ \theta_{i_1} \theta_{i_2} \cdots \theta_{i_r} + 
\sum_{j = 1}^r (-1)^j \theta_{1} \theta_{i_1} \cdots \widehat{\theta_{i_j}} \cdots \theta_{i_r} \,:\, 
\begin{array}{c}
\{i_1 < \cdots < i_r \} \text{ is an } \\  \text{ $r$-element subset of $\{2, 3, \dots, n \}$} 
\end{array} \right\}
\end{equation}
is linearly independent. }

To see why the Claim is true,
observe that 
$\theta_{i_1} \theta_{i_2} \cdots \theta_{i_r} $ only appears in the element 
corresponding to $\{i_1 < \cdots < i_r\}$. 
This completes the proof of the Claim.

Let us see how the Claim proves the lemma. Suppose there were numbers $\gamma_{(c_1, \dots, c_n)} \in \QQ$
not all zero so that 
\begin{equation}
\label{false-identity}
\sum_{(c_1, \dots, c_n)} \gamma_{(c_1, \dots, c_n)} \partial_1^{c_1} \cdots \partial_n^{c_n} \Delta_n(\aaa) = 0,
\end{equation}
where the sum is over all $(n,k)$-substaircases $(c_1, \dots, c_n)$.
Choose an $(n,k)$-substaircase $(d_1, \dots, d_n)$ so that
\begin{enumerate}
\item we have $\gamma_{(d_1, \dots, d_n)} \neq 0$, and
\item subject to (1) the number $d_1 + \cdots + d_n$ is minimal, and
\item subject to (1) and (2) the sequence $(d_1, \dots, d_n)$ is lexicographically least.
\end{enumerate}

Let $(d'_1, \dots, d'_n)$ be an $(n,k)$-staircase such that $d_i \leq d'_i$ for all $i$.
Let $1 \leq i_1 < \cdots < i_r$ be the indices such that $d_1 = d_{i_1} = \cdots = d_{i_r} = k-1$.
Write $p_i := d'_i - d_i$ and consider
applying the operator $\partial_1^{p_1} \cdots \partial_n^{p_n}$
to both sides of Equation~\eqref{false-identity}.

The application of $\partial_1^{p_1} \cdots \partial_n^{p_n}$ to the term 
$\gamma_{(c_1, \dots, c_n)} \partial_1^{c_1} \cdots \partial_n^{c_n} \Delta_n(\aaa)$ in 
Equation~\eqref{false-identity} has the following effect.
\begin{itemize}
\item If $(c_1 + p_1, \dots, c_n + p_n)$ is not a rearrangement of 
$(k-1, \dots, k-1, k-2, \dots, 1, 0)$ then
$\gamma_{(c_1, \dots, c_n)} \partial_1^{c_1 + p_1} \cdots \partial_n^{c_n + p_n} \Delta_n(\aaa) = 0$.
\item If $c_1 + p_1 < k - 1$ the lexicographical minimality of $(d_1, \dots, d_n)$ forces
$\gamma_{(c_1, \dots, c_n)} = 0$.
\item Otherwise, let $1 < s_1 < \cdots < s_r$ be the unique indices such that 
\begin{equation*}
c_1 + p_1 = c_{s_1} + p_{s_1} = \cdots = c_{s_r} + p_{s_r} = k - 1. 
\end{equation*}
We have
\begin{equation}
\partial_1^{c_1 + p_1} \cdots \partial_n^{c_n + p_n} \gamma_{(c_1, \dots, c_n)}
 \Delta_n(\aaa)  \doteq \gamma_{(c_1, \dots, c_n)} \left[
\theta_{s_1} \theta_{s_2} \cdots \theta_{s_r} + 
\sum_{j = 1}^r (-1)^j \theta_{1} \theta_{s_1} \cdots \widehat{\theta_{s_j}} \cdots \theta_{s_r} \right].
\end{equation}
\end{itemize}
Observe that the superpolynomial in the final bullet point is a superpolynomial appearing in the Claim.
Furthermore, if $(s_1, \dots, s_r) = (i_1, \dots, i_r)$, the lexicographical finality of
$(d_1, \dots, d_n)$ forces $(c_1, \dots, c_n) = (d_1, \dots, d_n)$.  Our Claim
gives the contradiction $\gamma_{(d_1, \dots, d_n)} = 0$.
\end{proof}

By combining Lemmas~\ref{upper-bound-lemma} and \ref{lower-bound-lemma},
we  prove Equation~\eqref{r-graded-frobenius} and 
obtain our new model for the Delta  coinvariants.

\begin{theorem}
\label{delta-vandermonde-theorem}
Let $k, r$ be nonnegative integers with $k + r = n$.  Let $\aaa = (k-1, k-1, \dots, k-1)$ where there are $r$ copies
of $k-1$.  We have
\begin{equation}
\grFrob(V_n(\aaa); q) = C_{n,k}(\xx;q) = \Delta'_{e_{k-1}} e_n \mid_{t = 0}.
\end{equation}
\end{theorem}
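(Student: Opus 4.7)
The plan is to combine the two sandwich lemmas already proved with the Frobenius twist identity from Proposition~\ref{annihilator-twist}. First I would invoke Lemma~\ref{upper-bound-lemma} to get the ideal containment $I_{n,k} \subseteq \ann_{\QQ[\xx_n]} \Delta_n(\aaa)$, which induces a graded surjection of $\symm_n$-modules
\begin{equation*}
R_{n,k} = \QQ[\xx_n]/I_{n,k} \twoheadrightarrow \QQ[\xx_n]/\ann_{\QQ[\xx_n]} \Delta_n(\aaa).
\end{equation*}
In particular, $\dim \QQ[\xx_n]/\ann_{\QQ[\xx_n]} \Delta_n(\aaa) \leq \dim R_{n,k} = k! \cdot \Stir(n,k)$ by the Haglund--Rhoades--Shimozono dimension count.

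Next I would use the canonical isomorphism $\varphi: \QQ[\xx_n]/\ann_{\QQ[\xx_n]} \Delta_n(\aaa) \xrightarrow{\sim} V_n(\aaa)$ from the proof of Proposition~\ref{annihilator-twist} to translate this into $\dim V_n(\aaa) \leq k! \cdot \Stir(n,k)$. Pairing this with the opposite inequality $\dim V_n(\aaa) \geq k! \cdot \Stir(n,k)$ supplied by Lemma~\ref{lower-bound-lemma} forces equality of dimensions. Consequently the surjection displayed above is an isomorphism of graded $\symm_n$-modules, yielding the ideal equality $\ann_{\QQ[\xx_n]} \Delta_n(\aaa) = I_{n,k}$.

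With this identification in hand, the final step is purely formal. Applying Proposition~\ref{annihilator-twist} gives
\begin{equation*}
(\rev_q \circ \omega)\grFrob(V_n(\aaa); q) = \grFrob(\QQ[\xx_n]/\ann_{\QQ[\xx_n]} \Delta_n(\aaa); q) = \grFrob(R_{n,k}; q),
\end{equation*}
and then Equation~\eqref{r-graded-frobenius} rewrites the right-hand side as $(\rev_q \circ \omega)C_{n,k}(\xx;q)$. Since $\rev_q \circ \omega$ is an involution on the relevant space of symmetric-function-valued polynomials in $q$, I can cancel it from both sides to conclude $\grFrob(V_n(\aaa); q) = C_{n,k}(\xx;q) = \Delta'_{e_{k-1}} e_n \mid_{t = 0}$.

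All of the genuine work has been done: the main obstacle was the ideal identification, and it has been reduced to the two dimension bounds, both of which are already established. The only thing to take care of in writing out the proof is to articulate cleanly why matching dimensions turn the surjection into an isomorphism and why the involution $\rev_q \circ \omega$ cancels, but neither of these introduces any further difficulty.
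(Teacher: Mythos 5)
Your proposal is correct and follows essentially the same route as the paper: the paper's proof likewise combines Lemma~\ref{upper-bound-lemma}, Lemma~\ref{lower-bound-lemma}, Proposition~\ref{annihilator-twist}, and Equation~\eqref{r-graded-frobenius} to squeeze the dimension and conclude the ideal equality $I_{n,k} = \ann_{\QQ[\xx_n]} \Delta_n(\aaa)$. The only cosmetic difference is that you spell out the cancellation of the injective operator $\rev_q \circ \omega$, which the paper leaves implicit.
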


\begin{proof}
By Proposition~\ref{annihilator-twist}, Equation~\eqref{r-graded-frobenius}, and
Lemma~\ref{upper-bound-lemma}, it is enough to show that 
\begin{equation}
\dim V_n(\aaa) \geq \dim \QQ[\xx_n]/I_{n,k}  = \dim R_{n,k} = k! \cdot \Stir(n,k).
\end{equation}
This is a consequence of Lemma~\ref{lower-bound-lemma}.
\end{proof}

\begin{remark}
The irreducible representation $S^{(n-1,1)}$ corresponding to the partition
$(n-1,1) \vdash n$
is the $(n-1)$-dimensional
{\em reflection representation} of $\symm_n$.
Explicitly, this representation is obtained by taking the quotient of the action of $\symm_n$
on $\QQ^n$ by coordinate permutation by the line of constant vectors.

The span  of the  ${n-1 \choose r}$ polynomials 
in the $\theta$-variables described
in \eqref{theta-polynomial-set}
in the proof of Lemma~\ref{lower-bound-lemma} is closed under the action of $\symm_n$.  
This span is 
isomorphic to the exterior power
 $\wedge^{r} S^{(n-1,1)}$ as an $\symm_n$-module. 
If $\lambda = (n-r, 1, \dots, 1) \vdash n$ (where there are $r$ copies of $1$), it is well-known that
$\wedge^{r} S^{(n-1,1)} \cong S^{\lambda}$.
Lemma~\ref{lower-bound-lemma} and Theorem~\ref{delta-vandermonde-theorem}
therefore explain the presence of hook-shaped Schur functions as the coefficient of 
$q^0 t^0$ in the 
Schur expansion of $\Delta'_{e_{k-1}} e_n$.
\end{remark}

The symmetric function $\grFrob(V_n(\aaa); q)$ 
of Theorem~\ref{delta-vandermonde-theorem}
can also be expressed in the Schur basis.
Applying \cite{WMultiset}[Thm. 5.0.1] (with $m = 0$ after taking the coefficient of $u^{n-k}$) 
and \cite[Cor. 6.13]{HRS} we see that
\begin{equation}
\grFrob(V_n(\aaa); q) = 
\sum_{T \in \mathrm{SYT}(n)} 
q^{\maj(T) + {n-k \choose 2} - (n-k) \cdot \des(T)} {\des(T) \brack n-k}_q s_{\mathrm{shape}(T)}.
\end{equation}
Here $\mathrm{SYT}(n)$ is the set of standard Young tableaux with $n$ boxes,
$\maj(T)$ is the major index of $T$, $\des(T)$ is the number of descents in $T$,
and $\mathrm{shape}(T) \vdash n$ is the shape of $T$;
see \cite{HRS} or \cite{WMultiset} for definitions of these terms.
We can describe the Hilbert series of the module in Theorem~\ref{delta-vandermonde-theorem}
in terms of $q$-Stirling numbers.

\begin{corollary}
\label{delta-vandermonde-hilbert}
Let $k, r$ be nonnegative integers with $k + r = n$.  Let $\aaa = (k-1, k-1, \dots, k-1)$ where there are $r$ copies
of $k-1$. 
We have
\begin{equation}
\Hilb(V_n(\aaa); q) = [k]!_q \cdot \Stir_q(n,k).
\end{equation}
\end{corollary}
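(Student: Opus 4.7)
The plan is to reduce this immediately to Theorem~\ref{delta-vandermonde-theorem} together with the known Hilbert series of the Haglund--Rhoades--Shimozono quotient $R_{n,k}$ from \cite{HRS}. The first observation is that for any graded $\symm_n$-module $V = \bigoplus_d V_d$, the Hilbert series is extracted from the graded Frobenius image by
\begin{equation*}
\Hilb(V;q) = \langle \grFrob(V;q), h_1^n \rangle,
\end{equation*}
where $\langle \cdot , \cdot \rangle$ is the Hall inner product (this pairs $s_\lambda$ with the number of standard Young tableaux of shape $\lambda$, which is precisely $\dim S^\lambda$).

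Next, I would combine Theorem~\ref{delta-vandermonde-theorem} with Equation~\eqref{r-graded-frobenius} to write
\begin{equation*}
\grFrob(R_{n,k};q) \;=\; (\rev_q \circ \omega)\, \grFrob(V_n(\aaa);q).
\end{equation*}
Applying $\langle \cdot , h_1^n \rangle$ and using $\omega(h_1^n) = h_1^n$ (since $h_1 = e_1$), together with the fact that $\rev_q$ inside the inner product simply reverses the resulting polynomial in $q$, yields
\begin{equation*}
\Hilb(R_{n,k};q) \;=\; \rev_q \Hilb(V_n(\aaa);q).
\end{equation*}

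Finally, I would invoke the Hilbert series computation of \cite{HRS}, which via the substaircase monomial basis gives $\Hilb(R_{n,k};q) = \rev_q\bigl([k]!_q \cdot \Stir_q(n,k)\bigr)$. Applying $\rev_q$ to both sides of the displayed equation above then produces the claimed identity $\Hilb(V_n(\aaa);q) = [k]!_q \cdot \Stir_q(n,k)$. The main (and essentially only) obstacle is bookkeeping: one must check that the conventions for the grading reversal appearing in Proposition~\ref{annihilator-twist} and in the Hilbert series formula from \cite{HRS} are consistent. No new combinatorial or representation-theoretic input is required beyond what was already developed to prove Theorem~\ref{delta-vandermonde-theorem}.
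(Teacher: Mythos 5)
Your proposal is correct and follows essentially the same route as the paper, which simply observes that the claimed formula is the $q$-reversal of the Hilbert series $\Hilb(R_{n,k};q) = \rev_q\bigl([k]!_q \cdot \Stir_q(n,k)\bigr)$ computed in \cite{HRS}. Your extra steps (pairing with $h_1^n$ and noting $\omega$ fixes it) just make explicit the standard passage from graded Frobenius image to Hilbert series that the paper leaves implicit.
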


\begin{proof}
The asserted formula is the $q$-reversal of the formula for  $\Hilb(R_{n,k}; q)$ 
given in \cite{HRS}.
\end{proof}

\section{Vandermondes and Other Graded Modules}
\label{Vandermondes and Other Graded Modules}

In this section we extend Theorem~\ref{delta-vandermonde-theorem}
to calculate $\grFrob(V_n(\aaa); q)$ for other constant vectors $\aaa$.
The first result involves uniformly increasing the entries of $\aaa$.

\subsection{Vandermondes and the quotient ring $R_{n,k,s}$}
Let $k, s,$ and $n$ be nonnegative
integers with $k \leq s$. We define 
the ideal $I_{n,k,s} \subseteq \QQ[\xx_n]$ by 
\begin{equation}
I_{n,k,s} := \langle x_1^s, x_2^s, \dots, x_n^s, e_n, e_{n-1}, \dots, e_{n-k+1} \rangle
\end{equation}
and let $R_{n,k,s} := \QQ[\xx_n]/I_{n,k,s}$ be the corresponding quotient ring.
When $k = s$ we have $I_{n,k,k} = I_{n,k}$ and $R_{n,k,k} = R_{n,k}$.
The rings $R_{n,k,s}$ are graded $\symm_n$-modules which were used in \cite{HRS}
to inductively understand the rings $R_{n,k}$. 
Pawlowski and Rhoades proved \cite{PR} that $R_{n,k,s}$ presents the cohomology of a 
certain space $X_{n,k,s}$ of line configurations.

We extend $(n,k)$-staircases to include the parameter $s$ as follows. 
 An {\em $(n,k,s)$-staircase} is a shuffle of the sequences
 $(s-1, s-1, \dots, s-1)$ (with $n-k$ copies of $s$) and 
 $(k-1, \dots, 1, 0)$.
 For example, the $(4,2,6)$-staircases are the shuffles of $(5,5)$ and $(1,0)$:
 \begin{equation*}
 (5,5,1,0), \, (5,1,5,0), \, (5,1,0,5), \, (1,5,5,0), \, (1,5,0,5), \text{ and } (1,0,5,5).
 \end{equation*}
 A sequence $(c_1, \dots, c_n)$ is {\em $(n,k,s)$-substaircase} if 
 it is componentwise $\leq$ at least one $(n,k,s)$-staircase.
The $(n,k,s)$-substaircase sequences parameterize a monomial basis of $R_{n,k,s}$.
 
 \begin{proposition}
 \label{three-parameter-basis}
 Let $k, s,$ and $n$ be nonnegative integers with $k \leq s$.
 The set of monomials
 \begin{equation*}
  \{ x_1^{c_1} \cdots x_n^{c_n} \,:\, \text{$(c_1, \dots, c_n)$ is an $(n,k,s)$-substaircase} \}
  \end{equation*}
 descends to a basis of $R_{n,k,s}$.
 \end{proposition}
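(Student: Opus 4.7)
The plan is to follow the blueprint of \cite[Thm. 4.13]{HRS}, which establishes the analogous basis in the case $s = k$, by combining a spanning statement with a dimension count. Specifically, I would prove that the $(n,k,s)$-substaircase monomials span $R_{n,k,s}$ over $\QQ$ and that their cardinality equals $\dim_\QQ R_{n,k,s}$; together these force linear independence and give the basis.

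For the spanning step, the generators $x_i^s$ immediately kill any monomial with an exponent exceeding $s-1$, so we may restrict attention to monomials $x_1^{c_1} \cdots x_n^{c_n}$ with each $c_i \leq s - 1$. A Hall-type analysis shows such a sequence is $(n,k,s)$-substaircase precisely when, for every $j \in \{1, \dots, s-1\}$, the number of entries with $c_i \geq j$ is bounded by $n - j$ when $1 \leq j \leq k - 1$ and by $n - k$ when $k \leq j \leq s-1$. If this bound fails for some $j$, set $m := n - \min(j, k) + 1$; we may then choose a subset $T \subseteq \{i : c_i \geq j\}$ of size $m \geq n - k + 1$ and use the relation $e_m(\xx_n) \equiv 0 \pmod{I_{n,k,s}}$ to rewrite $\prod_{i \in T} x_i$ as a $\QQ$-linear combination of the remaining squarefree degree-$m$ monomials. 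Multiplying through by the cofactor $\prod_{i \in T} x_i^{c_i - 1} \prod_{i \notin T} x_i^{c_i}$ then expresses $x_1^{c_1} \cdots x_n^{c_n}$ modulo $I_{n,k,s}$ in terms of monomials that should be closer to being substaircase in a suitably refined ordering.

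For the dimension count, I would either directly enumerate $(n,k,s)$-substaircase sequences graded by total exponent and match this to the Hilbert series of $R_{n,k,s}$ given in \cite{HRS}, or appeal to the geometric presentation of $R_{n,k,s}$ as the cohomology of the spanning line configuration variety $X_{n,k,s}$ from \cite{PR}. The main obstacle lies in the spanning argument: each reduction step can increase some exponents while decreasing others, so verifying that iteration terminates at a substaircase monomial requires a carefully engineered well-ordering on $n$-tuples, analogous to the one underlying the "recursive and rather involved" proof in \cite{HRS} of the $s = k$ case. Once both halves are in place, linear independence follows immediately by dimension comparison.
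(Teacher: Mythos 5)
There is a genuine gap, and it starts with your ``Hall-type'' characterization of $(n,k,s)$-substaircase sequences: being substaircase is \emph{not} determined by how many entries are $\geq j$ for each $j$, because the positions of the entries matter. The small entries $k-1, k-2, \dots, 1, 0$ of an $(n,k,s)$-staircase occur in decreasing order from left to right, so, for instance, with $n=3$, $k=2$, $s=3$ the staircases are $(2,1,0)$, $(1,2,0)$, $(1,0,2)$, and the sequence $(0,1,1)$ satisfies your counting conditions (two entries $\geq 1$ with bound $n-1=2$; zero entries $\geq 2$ with bound $n-k=1$) yet is dominated by no staircase. Since your entire reduction strategy is keyed to a violation of these counting conditions, the spanning argument as written does not reach all non-substaircase monomials. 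Beyond that, the straightening step itself is left open: you acknowledge that each application of $e_m \equiv 0$ can raise some exponents while lowering others and that termination needs a ``carefully engineered well-ordering,'' but that well-ordering is precisely the hard content of the spanning claim, and without it the argument is a plan rather than a proof. Finally, the dimension count you propose (enumerating substaircase sequences and matching the Hilbert series) would inherit the error in the characterization, and the Hilbert series of $R_{n,k,s}$ from \cite{HRS} is only stated there for $s \leq n$, so the case $n < s$ needs a separate remark.

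The paper takes a different and shorter route that avoids straightening altogether. It cites \cite[Sec.\ 6]{HRS}, where the standard monomial basis of $R_{n,k,s}$ is already computed: it consists of the \emph{$(n,k,s)$-nonskip} monomials, i.e.\ those $x_1^{c_1}\cdots x_n^{c_n}$ with all $c_i < s$ such that no reversed skip sequence $\gamma(S)^*$ with $|S| = n-k+1$ is componentwise $\leq (c_1,\dots,c_n)$ (and observes that the arguments there extend verbatim to $n < s$). The proposition then reduces to the purely combinatorial claim that nonskip $=$ substaircase. The containment ``substaircase $\Rightarrow$ nonskip'' is routine; for the converse the paper greedily constructs, from a nonskip sequence, the set $T = \{t_1 < \cdots < t_k\}$ of positions where a dominating staircase should place its entries $< k$, choosing $t_k$ maximal with $c_{t_k} < 1$, then $t_{k-1} < t_k$ maximal with $c_{t_{k-1}} < 2$, and so on, using the nonskip condition at each step to show the required position exists. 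If you want to salvage your approach, the correct position-sensitive characterization you need is exactly this nonskip condition, and the cleanest fix is to import the standard monomial basis from \cite{HRS} rather than re-derive spanning by hand.
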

 
 \begin{proof}
 In the case $k \leq s \leq n$,
Haglund, Rhoades, and Shimozono computed  \cite[Sec. 6]{HRS}  the standard monomial
 basis of $R_{n,k,s}$ in terms of `$(n,k,s)$-nonskip monomials'. 
The arguments of \cite[Sec. 6]{HRS} go through without change to the 
 case  $n < s$.
 We show that this monomial basis coincides with the set of $(n,k,s)$-substaircase monomials.
 
 Let $S = \{i_1 < \cdots < i_s\} \subseteq [n]$.  The {\em skip sequence} $\gamma(S) = (\gamma_1, \dots, \gamma_n)$
 corresponding to $S$ is defined by
 \begin{equation}
 \gamma_i = \begin{cases}
 i - j + 1 & \text{if $i = i_j \in S$,} \\
 0 & \text{if $i \notin S$.}
 \end{cases}
 \end{equation}
 We also let $\gamma(S)^* := (\gamma_n, \dots, \gamma_1)$ be the reverse of the sequence $\gamma(S)$.
 A sequence $(c_1, \dots, c_n)$ of nonnegative integers is {\em $(n,k,s)$-nonskip} if
 \begin{itemize}
 \item $c_i < s$ for all $1 \leq i \leq n$ and
 \item the coordinatewise inequality $\gamma(S)^* \leq (c_1, \dots, c_n)$ does {\em not} hold for any $S \subseteq [n]$
 with $|S| = n-k+1$.
 \end{itemize}
 The arguments
 of \cite[Sec. 6]{HRS} show that  the set 
 $\{ x_1^{c_1} \cdots x_n^{c_n} \,:\, (c_1, \dots, c)$ is $(n,k,s)$-nonskip$\}$
 descends to a basis of $R_{n,k,s}$. 
 The proposition therefore reduces to the following
 
 \noindent
 {\bf Claim:}
{\em Let $(c_1, \dots, c_n)$ be a sequence of nonnegative integers. Then $(c_1, \dots, c_n)$ is 
$(n,k,s)$-nonskip if and only if $(c_1, \dots, c_n)$ is $(n,k,s)$-substaircase.}

When $k = s$ this claim is proven in \cite{HRS}, so we assume that $k < s$.
The reverse implication reduces to showing that any $(n,k,s)$-staircase is $(n,k,s)$-nonskip, which 
we leave to the reader.
For the forward implication, let $(c_1, \dots, c_n)$ be an $(n,k,s)$-nonskip sequence.
We produce an $(n,k,s)$-staircase $(b_1, \dots, b_n)$ such that we have the 
componentwise inequality $(c_1, \dots, c_n) \leq (b_1, \dots, b_n)$.

Since we are assuming $k < s$, an $(n,k,s)$-staircase $(b_1, \dots, b_n)$ is determined by the 
set 
\begin{equation*}
T := \{ 1 \leq i \leq n \,:\, b_i < k \} = \{ t_1 < t_2 < \cdots < t_k \}
\end{equation*}
of positions of entries $< k$.  We describe how to form $T$ from $(c_1, \dots, c_n)$.

We claim that there exists $1 \leq t_k \leq n$ such that $c_{t_k} < 1$.
If not, we would have the componentwise inequality 
$(1, 1, \dots, 1) \leq (c_1, c_2, \dots, c_n)$. If $S = \{1, 2, \dots, n-k+1\}$,
we would have $\gamma(S)^* \leq (1, 1, \dots, 1) \leq (c_1, c_2, \dots, c_n)$, contradicting
the assumption that $(c_1, c_2, \dots, c_n)$ is $(n,k,s)$-nonskip.
Let $1 \leq t_k \leq n$ be maximal such that $c_{t_k} < 1$.  

With $t_k$ as in the last paragraph, we claim that there exists $1 \leq t_{k-1} < t_k$ 
with $c_{t_k} < 2$. 
If not, we would have the componentwise inequality
\begin{equation*}
(2, 2, \dots, 2, 0, 1, 1, \dots, 1) \leq (c_1, c_2, \dots, c_n)
\end{equation*}
where the $0$ is in position $t_k$.  If we take $S \subseteq [n]$
to be 
\begin{equation*}
S = \{1, 2, \dots, n - t_k, n - t_k + 2, n - t_k + 3, \dots, n-k+2 \}, 
\end{equation*}
we would have 
$\gamma(S)^* \leq (c_1, c_2, \dots, c_n)$, contradicting
the assumption that $(c_1, c_2, \dots, c_n)$ is $(n,k,s)$-nonskip.
Let $1 \leq t_{k-1} < t_k$ be maximal such that $c_{t_{k-1}} < 2$.

Given $t_{k-1} < t_k$ as above, we claim that there exists $1 \leq t_{k-2} < t_{k-1}$ 
with $c_{t_{k-2}} < 3$.  If not, the componentwise inequality
\begin{equation*}
(3, 3, \dots, 3, 0, 2, 2, \dots, 2, 0, 1, 1, \dots, 1) \leq (c_1, c_2, \dots, c_n)
\end{equation*}
with the $0$'s in positions $t_{k-1}$ and $t_k$ would contradict
$(c_1, \dots, c_n)$ being $(n,k,s)$-nonskip. 
Choose $1 \leq t_{k-2} < t_{k-1}$ minimal such that $c_{t_{k-2}} < 3$.

Since $(c_1, \dots, c_n)$ is $(n,k,s)$-nonskip, we can iterate this procedure to
obtain a $k$-element subset $T = \{t_1 < \cdots < t_k \} \subseteq [n]$.
Let $(b_1, \dots, b_n)$ be the unique $(n,k,s)$-staircase whose entries which are $< k$ are 
in the positions indexed by $T$. 
By the construction of $T$ we have $(c_1, \dots, c_n) \leq (b_1, \dots, b_n)$ so that 
$(c_1, \dots, c_n)$ is $(n,k,s)$-substaircase.
 \end{proof}

 Proposition~\ref{three-parameter-basis} can be used to extend
 Theorem~\ref{delta-vandermonde-theorem} to constant vectors with
 larger entries.

\begin{theorem}
\label{higher-constant-sequences}
Let $k, s,$ and $n$ with $k \leq s$ be nonnegative integers and let $r = n - k$.  Let 
$\aaa = (s-1, s-1, \dots, s-1)$ be the constant vector of length $r$ with entries $s-1$.
We have 
\begin{equation}
\grFrob(V_n(\aaa); q) = (\rev_q \circ \omega) \grFrob(R_{n,k,s}; q).
\end{equation}
\end{theorem}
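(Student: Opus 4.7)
The plan is to mirror the proof of Theorem~\ref{delta-vandermonde-theorem}, with Proposition~\ref{three-parameter-basis} playing the role that the count $k! \cdot \Stir(n,k)$ played there. Fix $\aaa = (s-1, \ldots, s-1) \in (\ZZ_{\geq 0})^r$. By Proposition~\ref{annihilator-twist}, it suffices to establish the ideal equality
\[
I_{n,k,s} = \ann_{\QQ[\xx_n]} \Delta_n(\aaa).
\]

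For the containment $I_{n,k,s} \subseteq \ann_{\QQ[\xx_n]} \Delta_n(\aaa)$, the generators $e_n, e_{n-1}, \ldots, e_{n-k+1}$ lie in the annihilator by Lemma~\ref{sign-reversing-lemma}, which applies to any sequence $\aaa$. For the power generators $x_i^s$, observe that every $x$-variable in the base supermonomial $x_1^{s-1} \cdots x_r^{s-1} x_{r+1}^{k-1} \cdots x_{n-1}^1 x_n^0 \cdot \theta_1 \cdots \theta_r$ appears to a power at most $\max(s-1, k-1) = s-1$, so $\partial_i^s$ annihilates it and hence $\Delta_n(\aaa)$. This yields a surjection $R_{n,k,s} \twoheadrightarrow \QQ[\xx_n]/\ann_{\QQ[\xx_n]} \Delta_n(\aaa)$, so $\dim V_n(\aaa) \leq \dim R_{n,k,s}$ by Proposition~\ref{annihilator-twist}.

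For the reverse containment, Proposition~\ref{three-parameter-basis} shows that $\dim R_{n,k,s}$ equals the number of $(n,k,s)$-substaircase sequences. It therefore suffices to prove that the set
\[
\bigl\{ \partial_1^{c_1} \cdots \partial_n^{c_n} \Delta_n(\aaa) \,:\, (c_1, \ldots, c_n) \text{ is } (n,k,s)\text{-substaircase} \bigr\} \subseteq V_n(\aaa)
\]
is linearly independent. I would follow the template of Lemma~\ref{lower-bound-lemma}: assume a nontrivial dependence with coefficients $\gamma_{(c_1, \ldots, c_n)}$, pick a participating sequence $(d_1, \ldots, d_n)$ of minimal total degree that is lexicographically least subject to that minimality, dominate it by an $(n,k,s)$-staircase $(d_1', \ldots, d_n')$, set $p_i = d_i' - d_i$, and apply the operator $\partial_1^{p_1} \cdots \partial_n^{p_n}$ to the supposed relation. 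Exactly as in the proof of Lemma~\ref{lower-bound-lemma}, terms whose shifted exponent sequence fails to rearrange into an $(n,k,s)$-staircase vanish, lex and degree minimality eliminate the other surviving contributions, and the remaining equation becomes a signed combination of $\theta$-polynomials that must vanish.

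The main obstacle is verifying that the residual $\theta$-polynomial identity forces $\gamma_{(d_1, \ldots, d_n)} = 0$. The case $s = k$ is exactly Lemma~\ref{lower-bound-lemma}. The case $s > k$ is in fact simpler: in any $(n,k,s)$-staircase the value $s - 1$ strictly exceeds every other entry, so the positions carrying exponent $s - 1$ after differentiation are uniquely determined, and each surviving term in the antisymmetrization contributes a single pure $\theta$-monomial $\theta_{s_1} \cdots \theta_{s_r}$ (up to a nonzero scalar) rather than the alternating sum of $r+1$ terms that appears when $s = k$. Linear independence of the distinct pure $\theta$-monomials indexed by $r$-subsets $\{s_1 < \cdots < s_r\} \subseteq [n]$ is then automatic, so the dependence forces $\gamma_{(d_1, \ldots, d_n)} = 0$. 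Combining upper and lower bounds yields $I_{n,k,s} = \ann_{\QQ[\xx_n]} \Delta_n(\aaa)$, and hence the theorem.
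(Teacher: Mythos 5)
Your proposal is correct and follows essentially the same route as the paper: reduce to the ideal equality $I_{n,k,s} = \ann_{\QQ[\xx_n]}\Delta_n(\aaa)$ via Proposition~\ref{annihilator-twist}, get the containment from Lemma~\ref{sign-reversing-lemma} together with the bound on exponents, and prove the dimension lower bound by the lex/degree-minimality differentiation argument applied to $(n,k,s)$-substaircases. Your observation that the case $s>k$ is simpler because the surviving terms are pure $\theta$-monomials $\theta_{t_1}\cdots\theta_{t_r}$ (rather than the alternating sums needed when $s=k$) is exactly the simplification the paper uses.
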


\begin{proof}
When $k = s$, this is Theorem~\ref{delta-vandermonde-theorem} so we assume $s < k$.

It is enough to demonstrate the equality of ideals
$I_{n,k,s} = \ann_{\QQ[\xx_n]} \Delta_n(\aaa)$.  
The containment $I_{n,k,s} \subseteq \ann_{\QQ[\xx_n]} \Delta_n(\aaa)$ 
follows from Lemma~\ref{sign-reversing-lemma}
and the fact that no $x$-variable in $\Delta_n(\aaa)$ has exponent $\geq s$. The desired equality of ideals
will follow if we can show
\begin{equation}
\dim V_n(\aaa) = \dim(\QQ[\xx_n]/\ann_{\QQ[\xx_n]} \Delta_n(\aaa)) 
\geq \dim(\QQ[\xx_n]/I_{n,k,s}) = \dim R_{n,k,s}.
\end{equation}
By 
Proposition~\ref{three-parameter-basis}, we know that $\dim(R_{n,k,s})$ equals the number of 
$(n,k,s)$-staircases. It is therefore enough to prove the following 

\noindent
{\bf Claim:}
{\em The subset
\begin{equation}
\label{larger-independent-set}
\{ \partial_1^{c_1} \cdots \partial_n^{c_n} \Delta_n(\aaa) \,:\, (c_1, \dots, c_n) \text{ is $(n,k,s)$-substaircase} \}
\subseteq V_n(\aaa)
\end{equation}
is linearly independent.}

Since $k < s$, the Claim is an easier version of Lemma~\ref{lower-bound-lemma}.
The set of ${n \choose r}$ supermonomials
\begin{equation}
\label{smaller-independent-set}
\{ \theta_{i_1}  \cdots \theta_{i_r} \,:\, 1 \leq i_1 < \cdots < i_r \leq n \}
\end{equation}
is certainly linearly independent.  
If the Claim were false, there would be scalars $\gamma_{(c_1, \dots, c_n)}$ not all zero so that
\begin{equation}
\label{higher-false-relation}
\sum_{(c_1, \dots, c_n)} \gamma_{(c_1, \dots, c_n)} \partial_1^{c_1} \cdots \partial_n^{c_n} \Delta_n(\aaa) = 0,
\end{equation}
where the sum is over all $(n,k,s)$-substaircase sequences $(c_1, \dots, c_n)$.

Let $(d_1, \dots, d_n)$ be the unique $(n,k,s)$-substaircase such that 
\begin{enumerate}
\item we have $\gamma_{(d_1, \dots, d_n)} \neq 0$,
\item subject to (1) the number $d_1 + \cdots + d_n$ is minimal, and
\item subject to (1) and (2) the sequence $(d_1, \dots, d_n)$ is lexicographically least.
\end{enumerate}
Let $(d'_1, \dots, d'_n)$ be a $(n,k,s)$-staircase such that $d_i \leq d'_i$ for all $i$ and set $p_i := d'_i - d_i$.
Let $1 \leq i_1 < \cdots < i_r \leq n$ be the indices such that $d'_{i_1} = \cdots = d'_{i_r} = s-1$.

Consider applying the operator $\partial_1^{p_1} \cdots \partial_n^{p_n}$ to both sides of
Equation~\eqref{higher-false-relation}. This operator has the following effect on 
 $\gamma_{(c_1,\dots,c_n)} \partial_1^{c_1} \cdots \partial_n^{c_n} \Delta_n(\aaa)$.
\begin{itemize}
\item If $(c_1 + p_1, \dots, c_n + p_n)$ is not a rearrangement of $(s-1, \dots, s-1, k-1, k-2, \dots, 1, 0)$
then 
$\gamma_{(c_1, \dots, c_n)} \partial_1^{c_1 + p_1} \cdots \partial_n^{c_n + p_n} \Delta_n(\aaa) = 0$.
\item If $(c_1 + p_1, \dots, c_n + p_n)$ is  a rearrangement of $(s-1, \dots, s-1, k-1, k-2, \dots, 1, 0)$
let $1 \leq t_1 < \cdots < t_r \leq n$ be the indices with $c_{t_1} + p_{t_1} = \cdots = c_{t_r} + p_{t_r} = s-1$. 
We have 
\begin{equation*}
\partial_1^{c_1 + p_1} \cdots \partial_n^{c_n + p_n} \Delta_n(\aaa) \doteq \theta_{t_1} \cdots \theta_{t_r}.
\end{equation*}
\end{itemize}
If $(c_1 + p_1, \dots, c_n + p_n)$ is as in the second bullet point and we have $(t_1, \dots, t_r) = (i_1, \dots, i_r)$,
the lexicographical minimality of $(d_1, \dots, d_n)$ forces 
$(c_1, \dots, c_n) = (d_1, \dots, d_n)$.
The linear independence of \eqref{smaller-independent-set} implies that
$\gamma_{(d_1, \dots, d_n)} = 0$, which is a contradiction.
\end{proof}

\subsection{Vandermondes and Tanisaki ideals}
The authors are unaware of a representation theoretic description 
of $V_n(\aaa)$ when $0 < s < r$ and $\aaa = (s-1, \dots , s-1)$ is a constant 
sequence of length $r$.
However, we can describe $V_n(\aaa)$ when $\aaa = (0, \dots, 0)$ is the length $r$ 
zero sequence.
In order to state this result, we will need a couple more definitions.

For any subset $S \subseteq [n]$ and any $d \geq 0$, let $e_d(S) \in \QQ[\xx_n]$ be the degree $d$ elementary
symmetric polynomial in the variable set $\{ x_i \,:\, i \in S \}$.  
We adopt the convention $e_d(S) = 0$ whenever $d > |S|$.

Let $\lambda \vdash n$.  The {\em Tanisaki ideal} $I_{\lambda} \subseteq \QQ[\xx_n]$ is defined
as follows.
Write the {\em conjugate} partition to $\lambda$ as
$\lambda' = (\lambda'_1 \geq \lambda'_2 \geq \cdots \geq \lambda'_n)$,
where we will have trailing zeros unless $\lambda = (n)$.
The ideal $I_{\lambda}$ has generating set
\begin{equation}
\bigcup_{i = 1}^n \{ e_d(S) \,:\, |S| = i \text{ and } d >  i - (\lambda'_{n-i+1} + \lambda'_{n-i+2} + \cdots + \lambda'_n)  \}.
\end{equation}
This ideal was used by Tanisaki \cite{Tanisaki} to present the cohomology of the 
{\em Springer fiber} $\mathcal{B}_{\lambda}$.
We let $R_{\lambda} := \QQ[\xx_n]/I_{\lambda}$ be the corresponding quotient ring, which is 
a graded $\symm_n$-module.

\begin{proposition}
\label{zero-tanisaki}
Let $r < n$ be positive integers and let $\aaa = (0, \dots, 0)$ be the length $r$ zero sequence.
Then 
\begin{equation}
\grFrob(V_n(\aaa); q) = (\rev_q \circ \omega) \grFrob(R_{\lambda}; q)
\end{equation}
where $\lambda$ is the hook-shaped partition $(r + 1, 1, 1, \dots, 1) \vdash n$.
\end{proposition}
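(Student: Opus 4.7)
The approach parallels the proof of Theorem~\ref{delta-vandermonde-theorem}: via Proposition~\ref{annihilator-twist} it suffices to show the equality of ideals $\ann_{\QQ[\xx_n]} \Delta_n(\aaa) = I_\lambda$, with $\aaa = (0, \ldots, 0)$ of length $r$ and $\lambda = (r+1, 1^{k-1})$, $k = n-r$. The main computational device is the Laplace-type expansion
\[\Delta_n(\aaa) \;=\; \sum_{\substack{T \subseteq [n]\\|T|=r}} \epsilon(T)\, \theta_T \cdot \Delta(T^c),\]
obtained by distributing $\varepsilon_n$ over the supermonomial $x_{r+1}^{k-1}\cdots x_n^0 \cdot \theta_1\cdots\theta_r$ and collecting by which $\theta$-indices appear: $\theta_T = \theta_{i_1}\cdots\theta_{i_r}$ is the sorted product for $T = \{i_1 < \cdots < i_r\}$, the sign $\epsilon(T) \in \{\pm 1\}$ comes from reordering, and $\Delta(T^c)$ is the classical Vandermonde in the $k$ variables indexed by $T^c$.

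For the containment $I_\lambda \subseteq \ann_{\QQ[\xx_n]}\Delta_n(\aaa)$, I would first unpack Tanisaki's definition for the hook $\lambda$ (whose conjugate is $\lambda' = (k, 1^r)$): modulo redundancy, $I_\lambda$ is generated by (a) all $e_d(\xx_n)$ for $d \geq 1$, and (b) all $e_d(S)$ with $|S| \geq k$ and $d \geq k$. Applied summand-by-summand, $\partial(e_d(\xx_n))$ acts on $\theta_T \Delta(T^c)$ as $\partial(e_d(T^c))$, which annihilates $\Delta(T^c)$ by the classical identity $I_k \cdot \Delta_k = 0$ in $k$ variables (and is trivially zero when $d > k$). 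Similarly $\partial(e_d(S))$ reduces to $\partial(e_d(S \cap T^c))$; if $|S \cap T^c| < d$ this already vanishes, and otherwise the bound $d \geq k$ forces $d = k$ and $T^c \subseteq S$, giving $\prod_{i \in T^c} \partial_i \cdot \Delta(T^c)$, an alternating polynomial in $k$ variables of $x$-degree $\binom{k}{2} - k < \binom{k}{2}$ and therefore zero.

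This containment yields a surjection $R_\lambda \twoheadrightarrow V_n(\aaa)$ (after the sign-twist and grading-reversal of Proposition~\ref{annihilator-twist}), and to upgrade it to an isomorphism I need the dimension bound $\dim V_n(\aaa) \geq \dim R_\lambda = n!/(r+1)!$. Mimicking Lemma~\ref{lower-bound-lemma}, I would parametrize candidate basis elements $\partial_1^{c_1}\cdots\partial_n^{c_n} \Delta_n(\aaa)$ by a combinatorial family of cardinality $n!/(r+1)!$ (a natural candidate being the permutations of the multiset $(0^{r+1}, 1, 2, \dots, k-1)$) and establish linear independence by a triangularity argument: given a putative nontrivial relation, pick the lexicographically-least leading term of minimal total $x$-degree, apply a further operator $\partial_1^{p_1}\cdots\partial_n^{p_n}$ chosen so that each surviving summand collapses to a single $\theta$-monomial, and derive a contradiction from linear independence of the $\theta_T$. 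The main obstacle will be calibrating this index family: because the hook $\lambda$ does not arise from a substaircase shuffle as in Theorem~\ref{delta-vandermonde-theorem}, the combinatorial data there does not transfer directly, and the relations $e_d(\xx_n) \in \ann$ for small $d$ (which are absent in that setting) must be encoded into the parameterization to cut the naive upper bound $\binom{n}{r} k!$ down to the correct $n!/(r+1)!$.
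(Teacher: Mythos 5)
Your proof of the containment $I_{\lambda} \subseteq \ann_{\QQ[\xx_n]} \Delta_n(\aaa)$ is correct and takes a genuinely different route from the paper's. The Laplace-type expansion $\Delta_n(\aaa) \doteq \sum_{|T|=r} \epsilon(T)\, \theta_T\, \Delta(T^c)$ reduces every Tanisaki generator to the classical fact that the coinvariant ideal in the $k$ variables indexed by $T^c$ annihilates the Vandermonde $\Delta(T^c)$; the paper instead reruns the dotted-staircase argument of Lemma~\ref{sign-reversing-lemma} (noting that for the zero sequence every dot must land in a $\theta$-free column) and disposes of the degree-$k$ squarefree generators by observing that each monomial of $\Delta_n(\aaa)$ involves only $k-1$ variables with positive exponent. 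Your reduction is cleaner and more self-contained, and your unpacking of the Tanisaki generators for the hook shape is accurate.

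The lower bound, however, has a genuine gap, and it is not merely a matter of ``calibration.'' If $(c_1, \dots, c_n)$ is a permutation of the multiset $\{0^{r+1}, 1, 2, \dots, k-1\}$ then $c_1 + \cdots + c_n = {k \choose 2}$ equals the full $x$-degree of $\Delta_n(\aaa)$, so every element $\partial_1^{c_1} \cdots \partial_n^{c_n} \Delta_n(\aaa)$ of your proposed family lies in the $x$-degree-zero component of $V_n(\aaa)$. Worse, only the terms of $\Delta_n(\aaa)$ whose exponent vector equals $(c_1, \dots, c_n)$ exactly survive the differentiation, and since $\Delta_n(\aaa)$ is alternating, the $(k-1)!$ sequences sharing a given zero-set $Z$ (of size $r+1$) all produce the same element $\sum_{j \in Z} \pm\, \theta_{Z \setminus \{j\}}$ up to a global sign. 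Your family therefore spans a space of dimension at most ${n \choose r+1}$, which is strictly smaller than $\dim R_{\lambda} = n!/(r+1)!$ as soon as $k \geq 3$. The correct index set is the family of $(n,k)$-hook-substaircases --- sequences componentwise bounded above by some shuffle of $(k-1, k-2, \dots, 1, 0)$ with $r$ zeros --- and the essential external input is the Garsia--Procesi result \cite{GP} that the corresponding monomials descend to a basis of $R_{\lambda}$; these sequences have varying total degree, which is precisely what lets the triangularity argument of Lemma~\ref{lower-bound-lemma} run. Note also that nothing needs to be ``encoded into the parameterization'' to cut down an upper bound: the upper bound on $\dim V_n(\aaa)$ already follows from the containment you proved, via Proposition~\ref{annihilator-twist}; what is missing is the correct $n!/(r+1)!$-element independent family.
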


\begin{proof}  
Write $k = n - r$.
We begin by showing that $I_{\lambda} \subseteq \ann_{\QQ[\xx_n]} \Delta_n(\aaa)$.

The ideal $I_{\lambda}$ is generated by:
\begin{enumerate}
\item the elementary symmetric polynomials $e_1, e_2, \dots, e_n$ in the full set of variables 
$\{x_1, \dots, x_n\}$ and
\item products of $k$ distinct variables $x_{i_1} \cdots x_{i_k}$.
\end{enumerate}
We show that each of these generators annihilates $\Delta_n(\aaa)$.
To do this, we adopt the notation in the proof of Lemma~\ref{sign-reversing-lemma}.

The generators of type (1) annihilate $\Delta_n(\aaa)$ by an argument similar to that 
in the proof of Lemma~\ref{sign-reversing-lemma}.
The key observation is that, since $\aaa$ is the zero sequence, all of the $\bullet$'s in any 
$d$-dotted $\aaa$-staircase must be in columns which do not contain a $\theta$.  
The generators of type (2) annihilate $\Delta_n(\aaa)$ because in any monomial appearing in
$\Delta_n(\aaa)$ there are only $k-1$ $x$-variables with positive exponents.
This completes the proof that $I_{\lambda} \subseteq \ann_{\QQ[\xx_n]} \Delta_n(\aaa)$.

We must show that 
\begin{equation}
\dim V_n(\aaa) = \dim(\QQ[\xx_n]/\ann_{\QQ[\xx_n]} \Delta_n(\aaa)) \geq 
\dim(\QQ[\xx_n]/I_{\lambda}) = \dim R_{\lambda}.
\end{equation}
The quantity $\dim R_{\lambda}$ has the following combinatorial description.
An  {\em $(n,k)$-hook staircase} is a shuffle of $(k-1, k-2, \dots, 1, 0)$ and the length 
$r$ zero sequence $(0, 0, \dots, 0)$.
For example, the $(5,3)$-hook staircases are the shuffles of $(2,1,0)$ and $(0,0)$:
\begin{equation*}
(2,1,0,0,0), \, 
(2,0,1,0,0), \, 
(2,0,0,1,0), \,
(0,2,1,0,0), \,
(0,2,0,1,0), \, \text{and }
(0,0,2,1,0).
\end{equation*}
A sequence $(c_1, \dots, c_n)$ is {\em $(n,k)$-hook-substaircase} if it is componentwise $\leq$
some $n,k$-hook staircase.  

It is known \cite{GP} that 
the set 
\begin{equation}
\{ x_1^{c_1} \cdots x_n^{c_n} \,:\, (c_1, \dots, c_n) \text{ is $(n,k)$-hook-substaircase} \}
\end{equation}
descends to a basis for $R_{\lambda}$.
As in Theorems~\ref{delta-vandermonde-theorem} and \ref{higher-constant-sequences}, we 
start with a family of linearly independent superpolynomials.

\noindent
{\bf Observation:}
{\it The subset
\begin{equation}
\left\{
\left( \sum_{j = 1}^{r} (-1)^{j-1} \theta_{i_1} \cdots \widehat{\theta_{i_j}} \cdots \theta_{i_r} \theta_n \right) +
(-1)^r \theta_{i_1} \cdots \theta_{i_r} \,:\, 1 \leq i_1 < \cdots < i_r \leq n-1
\right\}
\end{equation}
is linearly independent.}

We show that the set 
\begin{equation}
\{ \partial_1^{c_1} \cdots \partial_n^{c_n} \Delta_n(\aaa) \:\, (c_1, \dots, c_n) \text{ is
$(n,k)$-hook-substaircase} \} \subseteq V_n(\aaa)
\end{equation}
is linearly independent.
Indeed, suppose we had a dependence relation
\begin{equation}
\label{faux-identity}
\sum_{(c_1, \dots, c_n)} \gamma_{(c_1, \dots, c_n)}  \partial_1^{c_1} \cdots \partial_n^{c_n} \Delta_n(\aaa)  = 0
\end{equation}
where the sum is over $(n,k)$-hook-substaircases $(c_1, \dots, c_n)$ and the numbers
$\gamma_{(c_1, \dots, c_n)}$ are not all zero.  As before, let
$(d_1, \dots, d_n)$ be the unique $(n,k)$-hook-substaircase such that 
\begin{enumerate}
\item we have $\gamma_{(d_1, \dots, d_n)} \neq 0$,
\item subject to (1) the number $d_1 + \cdots + d_n$ is minimal, and 
\item subject to (1) and (2) the sequence $(d_1, \dots, d_n)$ is lexicographically least.
\end{enumerate}
Let $(d'_1, \dots, d'_n)$ be any $(n,k)$-hook staircase with $d_i \leq d'_i$ for all $i$ and set $p_i := d'_i - d_i$.
An argument similar to that of Lemma~\ref{lower-bound-lemma} yields the contradiction
$\gamma_{(d_1, \dots, d_n)} = 0$ upon application of $\partial_1^{p_1} \cdots \partial_n^{p_n}$ to both sides
of Equation~\eqref{faux-identity}; we leave the details to the reader.
\end{proof}

\subsection{A positroid superspace quotient}
In recent work related to an operation on symmetric functions and vector bundles
called `Chern plethysm', Billey, Rhoades, and Tewari \cite{BRT} defined 
a quotient of superspace which gives a bigraded refinement of an action of $\symm_n$
on size $n$ positroids.
In this subsection we use superpolynomials similar to 
$\aaa$-superspace Vandermondes to give an alternative model for their module.

Following \cite{BRT}, we let $J_n \subseteq \QQ[\xx_n, {\bm \theta_n}]$ 
be the bihomogeneous ideal
\begin{equation}
J_n := \langle x_1 \theta_1, x_2 \theta_2, \dots, x_n \theta_n, e_1, e_2, \dots, e_n \rangle,
\end{equation}
where the elementary symmetric polynomials $e_d = e_d(\xx_n)$ are in the $x$-variables.
Let $S_n := \QQ[\xx_n, {\bf \theta_n}]/J_n$ be the corresponding superspace quotient.
The ring $S_n$ is a bigraded $\symm_n$-module.
By \cite[Thm. 5.3]{BRT}, we have
\begin{equation}
\label{positroid-graded-frobenius}
\grFrob(S_n; q, z) = \sum_{r = 0}^n z^r \cdot e_r(\xx) \cdot \rev_q Q'_{(1^{n-r})}(\xx; q),
\end{equation}
where $q$ tracks $x$-degree and $z$ tracks $\theta$-degree.
Recall that $\rev_q Q'_{(1^{n-r})}(\xx; q)$ is the graded Frobenius image
of the coinvariant ring $R_{n-r}$ attached to $\symm_{n-r}$.

The module $S_n$ is related to positroids.
A {\em positroid} of size $n$ is a length $n$ sequence $p_1 \dots p_n$ of nonnegative integers 
which contains $r$ copies of $0$ (for some $0 \leq r \leq n$) and a single copy of $1, 2, \dots, n-r$.
Let $P_n$ be the family of positroids of size $n$. 
\footnote{A size $n$ positroid is more typically defined as a permutation in $\symm_n$ whose fixed points
are colored either black or white, but these objects are in bijection with $P_n$.}
For example, 
\begin{equation*}
P_3 = \{ 123, 213, 132, 231, 312, 321, 120, 210, 102, 201, 012, 021, 001, 010, 100, 000 \}.
\end{equation*}
By \cite[Prop. 5.2, Thm. 5.3]{BRT} the dimension of $S_n$ counts size $n$ positroids:
\begin{equation}
\dim(S_n) = |P_n| = \sum_{r = 0}^n \frac{n!}{r!}.
\end{equation}
The $\QQ$-vector space $\QQ[P_n]$ with basis $P_n$ carries an action of $\symm_n$ defined on adjacent
transpositions by
\begin{equation}
(i,i+1). p_1 \dots p_i p_{i+1} \dots  p_n := \pm ( p_1 \dots p_{i+1} p_i \dots p_n), \quad 1 \leq i \leq n-1
\end{equation}
where the sign is $-$ if $p_i = p_{i+1}$ and $+$ if $p_i \neq p_{i+1}$.
By \cite[Prop. 5.2, Thm. 5.3]{BRT} we have
\begin{equation}
\Frob(S_n) = \Frob(\QQ[P_n]) 
\end{equation}
so that $S_n$ gives a bigraded refinement of this $\symm_n$-action on positroids.

We will give an alternative model for $S_n$ as a bigraded subspace of $\QQ[\xx_n, {\bm \theta}_n]$
rather than as a quotient ring.
For $0 \leq k \leq n $ we define $\rho_{n,k} \in \QQ[\xx_n, {\bf \theta}_n]$ to be the 
superpolynomial
\begin{equation}
\rho_{n,k} := \varepsilon_k \cdot (x_1^{k-1} \cdots x_{k-1}^1 x_k^0) \cdot \theta_{k+1} \cdots \theta_{n-1} \theta_n.
\end{equation}
Here $\varepsilon_k = \sum_{w \in \symm_k} \sign(w) \cdot w$ acts on the subscripts of the first $k$ 
variables.
In particular, the element $\rho_{n,n} = \Delta_n$ is the classical Vandermonde.

\begin{defn}
\label{w-space-definition}
Let $M_n$ be the smallest linear subspace of $\QQ[\xx_n, {\bm \theta}_n]$ which
\begin{itemize}
\item contains each of the superpolynomials $\rho_{n,0}, \rho_{n,1}, \dots, \rho_{n,n}$,
\item is closed under the action of $\symm_n$, and
\item is closed under the partial derivatives $\partial_1, \partial_2, \dots, \partial_n$
acting on the $x$-variables.
\end{itemize}
\end{defn}

The vector space $M_n$ is a bigraded $\symm_n$-module.

\begin{proposition}
\label{w-structure}
The bigraded $\symm_n$-modules $S_n$ and $M_n$ are isomorphic.  Equivalently, we have
\begin{equation}
\label{w-bigraded-frobenius}
\grFrob(M_n; q, z) = \sum_{r = 0}^n z^r \cdot e_r(\xx) \cdot \rev_q Q'_{(1^{n-r})}(\xx; q).
\end{equation}
\end{proposition}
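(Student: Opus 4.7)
The key structural observation is that each generator $\rho_{n,k}$ is bihomogeneous with $\theta$-degree $n-k$, so the $\rho_{n,k}$'s live in distinct $\theta$-degrees. Since both the symmetric group action and the partial derivative operators $\partial_1, \dots, \partial_n$ preserve $\theta$-degree, the space $M_n$ splits as $M_n = \bigoplus_{r=0}^n M_n^{(r)}$, where $M_n^{(r)}$ is the $\theta$-degree $r$ part and is precisely the closure of $\rho_{n,n-r}$ under $\symm_n$ and the operators $\partial_i$. The plan is to identify each $M_n^{(r)}$ as an induced module from $\symm_k \times \symm_r$ to $\symm_n$, where $k = n-r$.

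Write $\rho_{n,k} = \Delta_k \cdot \theta_{k+1} \cdots \theta_n$, where $\Delta_k$ is the classical Vandermonde in $x_1, \dots, x_k$. Since $\partial_i$ kills every $\theta_j$, applying all words in $\partial_1, \dots, \partial_n$ to $\rho_{n,k}$ produces exactly $H([k]) \cdot \theta_{k+1} \cdots \theta_n$, where $H([k])$ is the span of partial derivatives of $\Delta_k$ in $x_1, \dots, x_k$. By the classical isomorphism \eqref{classical-cone-isomorphism} applied to the variable set $\{x_1, \dots, x_k\}$, we have $H([k]) \cong R_k$ as a graded $\symm_k$-module.

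Next I would apply the $\symm_n$ action. The subgroup $\symm_k \times \symm_r \subseteq \symm_n$ (acting on $\{1,\dots,k\}$ and $\{k+1,\dots,n\}$ separately) preserves the line $\QQ \cdot \theta_{k+1} \cdots \theta_n$: the $\symm_k$-factor fixes it and the $\symm_r$-factor acts on it by the sign character, by anticommutativity of the $\theta$'s. Hence $H([k]) \cdot \theta_{k+1} \cdots \theta_n$ is a $(\symm_k \times \symm_r)$-submodule isomorphic to $H([k]) \otimes \mathrm{sign}_{\symm_r}$. For distinct cosets $w(\symm_k \times \symm_r) \in \symm_n/(\symm_k \times \symm_r)$, the subsets $w([k]) \subseteq [n]$ differ, so the translates $w \cdot (H([k]) \cdot \theta_{k+1} \cdots \theta_n)$ involve disjoint $\theta$-monomials and are $\QQ$-linearly independent. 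This exhibits
\begin{equation*}
M_n^{(r)} \cong \Ind_{\symm_k \times \symm_r}^{\symm_n}\bigl(H([k]) \otimes \mathrm{sign}_{\symm_r}\bigr)
\end{equation*}
as graded $\symm_n$-modules (with $q$ tracking $x$-degree). Since induction multiplies Frobenius images and $\Frob(\mathrm{sign}_{\symm_r}) = e_r$, together with $\grFrob(R_k;q) = \rev_q Q'_{(1^k)}(\xx;q)$, we obtain
\begin{equation*}
\grFrob(M_n^{(r)};q) = e_r(\xx) \cdot \rev_q Q'_{(1^{n-r})}(\xx;q).
\end{equation*}
Summing over $r$ with $z^r$ tracking $\theta$-degree gives \eqref{w-bigraded-frobenius}, which matches \eqref{positroid-graded-frobenius}; equality of bigraded Frobenius images delivers the module isomorphism $S_n \cong M_n$.

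The only delicate step is the sign/stabilizer bookkeeping in the induced-module identification, which must correctly pair the anticommutativity sign with the $\symm_r$-factor (producing $\mathrm{sign}_{\symm_r}$, hence the factor $e_r$ rather than $h_r$). Once that is pinned down, the linear independence of translates follows automatically from the $\theta$-monomial support argument, and the rest is a direct application of the classical coinvariant theory to each $k$-element subset.
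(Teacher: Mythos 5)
Your proposal is correct and follows essentially the same route as the paper's proof: decompose $M_n$ by $\theta$-degree, split each piece as a direct sum indexed by $r$-subsets of $[n]$ (equivalently, by cosets of $\symm_{n-r}\times\symm_r$), identify one summand as $R_{n-r}\otimes\mathrm{sign}_{\symm_r}$, and conclude via the induction product and the multiplicativity of Frobenius images. The sign/stabilizer bookkeeping you flag as the delicate point is handled identically in the paper.
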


\begin{proof}
For $0 \leq r \leq n$ let $M_{n-r}$ be the smallest subspace of $M_n$ containing $\rho_{n,n-r}$
which is closed under the action of $\symm_n$ and the partial derivatives $\partial_1, \dots, \partial_n$.
Then $M_{n-r}$ is the $\theta$-homogeneous piece of $M_n$ of degree $r$, so that 
\begin{equation}
\grFrob(M_n; q, z) = \sum_{r = 0}^n z^r \cdot \grFrob(M_{n-r}; q),
\end{equation}
so it suffices to verify
\begin{equation}
\label{w-goal-equation}
\grFrob(M_{n-r}; q) = e_r(\xx) \cdot  \rev_q Q'_{(1^{n-r})}(\xx; q),
\end{equation}
where $q$ tracks $x$-degree.

Equation~\eqref{w-goal-equation} states that $M_{n-r}$ is the induction product of the sign
representation of $\symm_r$ with the coinvariant algebra attached to $\symm_{n-r}$.
Indeed, for any $I = \{i_1 < \cdots < i_r \} \subseteq [n]$ with $|I| = r$ and complement
$J = [n] - I = \{j_1 < \cdots < j_k \}$,
let $M_I \subseteq M_{n-r}$ be the 
smallest linear subspace such that 
\begin{itemize}
\item we have
$\left[ \left( \sum_{w \in \symm_J} \sign(w) \cdot w \right) \cdot x_{j_1}^{k-1} \cdots x_{j_{k-1}}^1 x_{j_k}^0 \right] \cdot
\theta_{i_1} \cdots \theta_{i_r} \in M_I$, and
\item $M_I$ is closed under the partial derivatives $\partial_1, \dots, \partial_n$.
\end{itemize} 
We have the vector space direct sum decomposition
\begin{equation}
\label{w-direct-sum}
M_{n-r} = \bigoplus_{\substack{I \subseteq [n] \\ |I| = r}} M_I.
\end{equation}
Taking $I = [r] = \{1, 2, \dots, r \}$, the space $M_{[r]}$ is a graded $\symm_r \times \symm_{n-r}$-module
isomorphic to $\sign_r \otimes R_{n-r}$, where $\sign_r$ is the 1-dimensional sign representation of $\symm_r$
and $R_{n-r}$ is the coinvariant ring attached to $\symm_{n-r}$.  
Equation~\eqref{w-direct-sum} leads to the identification of $M_{n-r}$ as the induction product
\begin{equation}
\label{w-induction-product}
M_{n-r} \cong \sign_r \circ R_{n-r},
\end{equation}
which implies Equation~\eqref{w-goal-equation}.
\end{proof}

\section{Antisymmetric differentiation}
\label{Antisymmetric differentiation}

The singly graded modules $V_n(\aaa)$  are based on an action of the partial 
derivative operators $\partial_i$ acting on the $x$-variables in $\QQ[\xx_n, {\bm \theta_n}]$.
The goal of this section is to describe how new operators $\partial^{\theta}_i$ acting on the $\theta$-variables
can be used to build new doubly graded modules $W_n(\aaa)$.
The modules $W_n(\aaa)$ will contain the $V_n(\aaa)$ as their top antisymmetric  
components and will exhibit a new kind of duality which is invisible at the level of $V_n(\aaa)$.

\subsection{The operators $\partial_i^{\theta}$}
How can we differentiate with respect to a skew-commuting variable?
Recall from Section~\ref{Introduction} that 
 $\partial^{\theta}_i: \QQ[\xx_n, {\bm \theta}_n] \rightarrow \QQ[\xx_n, {\bm \theta_n}]$ 
is the $\QQ[\xx_n]$-linear operator determined on $\theta$-monomials by the rule
\begin{equation}
\partial^{\theta}_i: \theta_{j_1} \cdots \theta_{j_r} \mapsto
\begin{cases}
(-1)^{k-1} \theta_{j_1} \cdots \widehat{\theta_{j_k}} \cdots \theta_{j_r} &
\text{if $j_k = i$,} \\
0 & \text{if $i \notin \{j_1 < \dots < j_k \}$}
\end{cases}
\end{equation}
for all $1 \leq j_1 < \cdots < j_r \leq n$.
The sign $(-1)^{k-1}$ ensures that $\partial^{\theta}_i$ is a well-defined $\QQ[\xx_n]$-endomorphism of 
$\QQ[\xx_n, {\bm \theta_n}]$.  
For another characterization of these operators, see Remark~\ref{uniqueness-remark}.
We begin with some basic identities satisfied by the $\partial^{\theta}_i$.

\begin{lemma}
\label{basic-partial-lemma}
Let $1 \leq i, j \leq n$.  We have the following identities of  operators on 
$\QQ[\xx_n, {\bm \theta_n}]$.
\begin{equation}
\label{derivative-superspace-relations}
\partial_i \partial_j = \partial_j \partial_i, \quad
\partial_i \partial^{\theta}_j = \partial^{\theta}_j \partial_i, \quad
\partial^{\theta}_i \partial^{\theta}_j = - \partial^{\theta}_j \partial^{\theta}_i.
\end{equation}
Furthermore, if $w \in \symm_n$ we have the operator identities 
\begin{equation}
\label{permutation-derivative-relations}
w \cdot \partial_i \cdot w^{-1} = \partial_{w(i)}, \quad
w \cdot \partial^{\theta}_i \cdot w^{-1} = \partial^{\theta}_{w(i)}.
\end{equation}
Finally, if $f, g \in \QQ[\xx_n, {\bm \theta_n}]$ and $f$ is concentrated in $\theta$-degree $r$ we have
\begin{equation}
\label{leibniz}
\partial^{\theta}_i(f g) = \partial^{\theta}_i(f) g + (-1)^r f \partial^{\theta}_i(g).
\end{equation}
\end{lemma}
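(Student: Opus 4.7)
The plan is to verify each family of identities by a direct check on the basis of supermonomials, reducing everything to bookkeeping of signs arising from the anticommutativity of the $\theta$-variables.

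For the three relations in \eqref{derivative-superspace-relations}: the identity $\partial_i \partial_j = \partial_j \partial_i$ is the classical commutativity of partial differentiation on the commutative subalgebra $\QQ[\xx_n]$, extended $\bigwedge({\bm \theta}_n)$-linearly. The mixed relation $\partial_i \partial^{\theta}_j = \partial^{\theta}_j \partial_i$ follows because $\partial_i$ is $\bigwedge({\bm \theta}_n)$-linear while $\partial^{\theta}_j$ is $\QQ[\xx_n]$-linear, so on a supermonomial $x_1^{a_1}\cdots x_n^{a_n} \theta_{j_1}\cdots \theta_{j_r}$ the two operators act on independent tensor factors and therefore commute. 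The anticommutation $\partial^{\theta}_i \partial^{\theta}_j = -\partial^{\theta}_j \partial^{\theta}_i$ requires the only real sign bookkeeping: the case $i=j$ reduces to $(\partial^{\theta}_i)^2 = 0$, which is immediate since $\partial^{\theta}_i$ deletes the unique occurrence of $\theta_i$; for $i\ne j$, check on $\theta_{j_1}\cdots\theta_{j_r}$ where $i=j_a, j=j_b$ with $a<b$, and compare the signs $(-1)^{a-1}(-1)^{b-2}$ (when $j$ is removed first) versus $(-1)^{b-1}(-1)^{a-1}$ (when $i$ is removed first) to see they differ by a sign.

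For the conjugation identities \eqref{permutation-derivative-relations}, it suffices to evaluate both sides on a supermonomial. For $w\cdot \partial_i \cdot w^{-1} = \partial_{w(i)}$, applying $w^{-1}$ sends $x_j \mapsto x_{w^{-1}(j)}$, so $\partial_i$ then differentiates with respect to the variable originally indexed by $w(i)$, and finally $w$ relabels back; the matching check for $\partial^{\theta}_i$ is the same, with attention to the fact that relabeling the $\theta$-indices by $w$ and then re-sorting introduces a sign, but this sign appears identically on both sides of the conjugation, so it cancels.

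For the signed Leibniz rule \eqref{leibniz}, I would proceed by bilinearity and reduce to the case $f = \theta_{j_1}\cdots\theta_{j_r}$ and $g = x^{\aaa}\theta_{k_1}\cdots\theta_{k_s}$, noting that $\partial^{\theta}_i$ is $\QQ[\xx_n]$-linear so the $x$-factor can be pulled out. If $i\notin \{j_1,\dots,j_r\}\cup\{k_1,\dots,k_s\}$ both sides vanish. If $i = j_a$, then $\partial^{\theta}_i(f)g$ produces the sign $(-1)^{a-1}$ from removing $\theta_{j_a}$ from the front factor, which matches the sign obtained by computing $\partial^{\theta}_i(fg)$ directly (after re-sorting the concatenation $j_1\cdots j_r k_1\cdots k_s$). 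If $i = k_b$, the deletion inside $g$ contributes $(-1)^{b-1}$, but this $\theta_{k_b}$ is preceded by $r$ variables from $f$ in the product $fg$, giving an extra sign $(-1)^r$, which is precisely the factor in $(-1)^r f \partial^{\theta}_i(g)$.

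I expect the main obstacle is purely notational: the anticommutation and Leibniz arguments demand a careful tracking of which position a given $\theta_i$ sits in after products and deletions, and the cleanest writeup reduces the question to comparing two expressions for a single sign exponent. Once the sign conventions in the definition of $\partial^{\theta}_i$ are unpacked, each identity becomes a routine exponent check, so no deeper input is required.
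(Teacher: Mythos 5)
Your proposal is correct and follows essentially the same route as the paper: direct verification of each identity on supermonomials with careful sign bookkeeping, reducing the Leibniz rule to $\theta$-monomials via $\QQ[\xx_n]$-linearity exactly as the paper does (the only divergence is that the paper proves the conjugation identity $w \cdot \partial^{\theta}_i \cdot w^{-1} = \partial^{\theta}_{w(i)}$ by reducing to adjacent transpositions, whereas you verify it for general $w$ and appeal to cancellation of the re-sorting sign — a claim that deserves one more line of justification, since the right-hand side involves no re-sorting at all). Two cosmetic slips: in the anticommutation check your labels are swapped ($(-1)^{a-1}(-1)^{b-2}$ is the sign when $i=j_a$ is removed \emph{first}, not when $j=j_b$ is), though the two exponents you compare are the right ones and do differ by $1$ mod $2$.
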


\begin{proof}
We begin with \eqref{derivative-superspace-relations}.
The first assertion is the standard commutativity of mixed partials.
The second follows because $\partial_i$ acts on $x$-variables and $\partial_i^{\theta}$
acts on $\theta$-variables.
The third can be verified directly on any $\theta$-monomial $\theta_{k_1} \cdots \theta_{k_r}$
for $1 \leq k_1 < \cdots < k_r \leq n$. There are two cases depending on whether
$i, j \in \{k_1, \dots, k_r\}$; 
we leave the 
details to the reader.

We turn our attention to \eqref{permutation-derivative-relations}. The first assertion of 
\eqref{permutation-derivative-relations} has already been observed. For the second, it suffices to consider the 
case  $w = (p, p+1)$ is an adjacent transposition in $\symm_n$ for some $1 \leq p \leq n-1$.
Given $1 \leq k_1 < \cdots < k_r \leq n$, it is enough to show that
\begin{equation}
\label{permutation-lemma-goal}
w \cdot \partial^{\theta}_i \cdot w^{-1} (\theta_{k_1} \cdots \theta_{k_r}) = 
\partial^{\theta}_{w(i)} (\theta_{k_1} \cdots \theta_{k_r}), \quad \text{where $w = (p, p+1)$.}
\end{equation}
We have 
\begin{equation}
\label{condition}
w(i) \in \{ k_1 , \dots , k_r \} \text{ if and only if }
i \in \{ w^{-1}(k_1), \dots w^{-1}(k_r) \}.
\end{equation}
If \eqref{condition} does not hold, then both sides of \eqref{permutation-lemma-goal} equal 0, so assume
\eqref{condition} does hold.
If $i \notin \{p, p+1\}$ then $w(i) = i$ and both sides of \eqref{permutation-lemma-goal} equal
$(-1)^{s-1} \theta_{k_1} \cdots \widehat{\theta_{k_s}} \cdots \theta_{k_r}$ 
where $i = k_s$.
If $i = p$ then $w(i) = i+1$; we leave it for the reader 
to check that both sides of \eqref{permutation-lemma-goal} equal
$(-1)^{s-1} \theta_{k_1} \cdots \widehat{\theta_{k_s}} \cdots \theta_{k_r}$ 
where $i+1 = k_s$ (there are two cases depending on whether $i \in \{k_1, \dots, k_r\}$).
The case $i = p+1$ is similar to the case $i = p$ and left to the reader.

Since $\partial_i^{\theta}$ is a map of $\QQ[\xx_n]$-modules,
 \eqref{leibniz} can be verified in the case where $f, g$ are monomials
in the $\theta$-variables.  We leave the details to the reader.
\end{proof}

\begin{remark}
\label{uniqueness-remark}
We will not use the Leibniz relation \eqref{leibniz} in this paper, but 
the operator $\partial_i^{\theta}$ can be characterized as the unique $\QQ$-linear endomorphism of 
$\QQ[\xx_n, {\bm \theta_n}]$ which satisfies 
\begin{equation}
\partial^{\theta}_i(x_j) = 0 \quad \text{and} \quad
\partial^{\theta}_i(\theta_j) = \delta_{i,j}
\end{equation}
(where $1 \leq j \leq n$ and $\delta_{i,j}$ is the Kronecker delta)
together with \eqref{leibniz}.
\end{remark}

By \eqref{derivative-superspace-relations}, superspace $\QQ[\xx_n, {\bm \theta_n}]$ acts on itself by the rule
\begin{equation}
f \cdot g := \partial(f)(g) \quad \text{for $f, g \in \QQ[\xx_n, {\bm \theta_n}]$},
\end{equation}
where $\partial(f)$ is obtained from $f$ by replacing every $x_i$ by $\partial_i$ and every $\theta_i$ by 
$\partial^{\theta}_i$.
This extends the action of $\QQ[\xx_n]$ on superspace discussed earlier.
We repeat Definition~\ref{double-cone-definition}, which introduces the bigraded modules of study.

\vspace{0.1in}
\noindent
{\bf Definition 1.4.}
{\em Suppose $n = k + r$ for $k, r \geq 0$ 
 and let $\aaa \in (\ZZ_{\geq 0})^r$.  Let $W_n(\aaa)$
to be the smallest $\QQ$-linear subspace of $\QQ[\xx_n, {\bm \theta_n}]$
containing $\Delta_n(\aaa)$ which is closed 
under the $n$ partial derivative operators
$\partial_1, \dots, \partial_n$ as well as the $n$ operators
$\partial^{\theta}_1, \dots, \partial^{\theta}_n$.}
\vspace{0.1in}

Definition~\ref{double-cone-definition} can also be interpreted as saying that 
$W_n(\aaa)$ is the cyclic $\QQ[\xx_n, {\bm \theta_n}]$-submodule
of $\QQ[\xx_n, {\bm \theta_n}]$ generated by $\Delta_n(\aaa)$.
Since $\Delta_n(\aaa)$ is alternating, the relations \eqref{permutation-derivative-relations}
imply that $W_n(\aaa)$ is closed under the action of $\symm_n$ and therefore a bigraded $\symm_n$-module.
We have $V_n(\aaa) \subseteq W_n(\aaa)$; in fact, $V_n(\aaa)$ is the $\theta$-homogeneous piece of 
$W_n(\aaa)$ of $\theta$-degree $r$.

The spaces $W_n(\aaa)$ have nicer algebraic properties than the $V_n(\aaa)$.
For example, we will see that $W_n(\aaa)$ may be presented as a bigraded quotient of superspace.
We repeat the relevant
Definition~\ref{ideal-ring-definition}.

\vspace{0.1in}
\noindent
{\bf Definition 1.5.}
{\em Suppose $n = k + r$ for $k, r \geq 0$ and let $\aaa = (a_1, \dots, a_r) \in (\ZZ_{\geq 0})^r$.  
Let $I_n(\aaa) \subseteq \QQ[\xx_n, {\bm \theta_n}]$ be the ideal
$I_n(\aaa) := \{ f \in \QQ[\xx_n, {\bm \theta_n}] \,:\, f \cdot \Delta_n(\aaa) = 0 \}$
and let
$R_n(\aaa) := \QQ[\xx_n, {\bm \theta_n}]/I_n(\aaa)$
be the corresponding quotient ring.}
\vspace{0.1in}

The bigraded vector space $W_n(\aaa)$ and the bigraded ring $R_n(\aaa)$ posses 
a duality which is invisible at the level of $V_n(\aaa)$.
Establishing this duality -- as well as the equivalence of $W_n(\aaa)$ and $R_n(\aaa)$
as doubly graded $\symm_n$-modules -- is our next goal.

%We may consider the annihilator 
%\begin{equation}
%\ann_{\QQ[\xx_n, {\bm \theta_n}]} \Delta_n(\aaa)=
%\{ f \in \QQ[\xx_n, {\bm \theta_n}] \,:\, f \cdot \Delta_n(\aaa) = 0 \}
%\end{equation}
%of $\Delta_n(\aaa)$ inside $\QQ[\xx_n, {\bm \theta_n}]$.
%This is a (two-sided) ideal in $\QQ[\xx_n, {\bm \theta_n}]$ which is closed under the action
%of $\symm_n$.
%Proposition~\ref{annihilator-twist} has the following analogue whose proof is similar and omitted.

%\begin{proposition}
%\label{super-annihilator-twist}
%For any $0 \leq r \leq n$ and any sequence $\aaa \in (\ZZ_{\geq 0})^r$ we have 
%\begin{equation}
%\grFrob(\QQ[\xx_n, {\bm \theta_n}]/\ann_{\QQ[\xx_n, {\bm \theta_n}]} \Delta_n(\aaa); q, z) =
%(\rev_q \circ \rev_z \circ \omega)
%\grFrob(W_n(\aaa); q, z)
%\end{equation}
%where $q$ tracks $x$-degree and $z$ tracks $\theta$-degree.
%\end{proposition}

%In Proposition~\ref{super-annihilator-twist} the operator $\rev_q$ acts on formal power series in 
%$\QQ[[q, z, x_1, x_2, \dots ]]$ with finite $q$-degree by regarding them
%as polynomials in $\QQ[[z, x_1, x_2, \dots ]][q]$.  A similar remark applies to $\rev_z$.

\subsection{A duality of $W_n(\aaa)$}
To better understand the duality 
enjoyed by the $W_n(\aaa)$, let us look at some examples of their bigraded 
Frobenius images.
The symmetric function $\grFrob(W_3(1); q, z)$
is displayed in a matrix below, where the entry in row $i$ and column $j$ gives the coefficient
of $z^i q^j$ in the Schur basis.

\begin{equation*} 
\begin{pmatrix}
s_3 & s_3 + s_{21} & s_{21} \\
s_{21} & s_{21} + s_{111} & s_{111}
\end{pmatrix}
\end{equation*}
The symmetric function $\grFrob(W_4(1,1);q,z)$ is similarly displayed below.

\begin{equation*}
\begin{pmatrix}
s_4 & s_4 + s_{31} & s_4 + s_{31} + s_{22} & s_{31} \\
s_{31} & 2 s_{31} + s_{22} + s_{211} & s_{31} + s_{22} + 2 s_{211} & s_{211} \\
s_{211} & s_{22} + s_{211} + s_{1111} & s_{211} + s_{1111} & s_{1111}
\end{pmatrix}
\end{equation*}
Finally, we display the symmetric function $\grFrob(W_4(2,1);q,z)$.
\begin{equation*}
\begin{footnotesize}
\begin{pmatrix}
s_4 & s_4 + s_{31} & s_4 + 2 s_{31} + s_{22} & s_4 + 2 s_{31} + s_{22} + s_{211} & s_{31} + s_{211} \\
s_4 + s_{31} & s_4 + 3 s_{31} + s_{22} + s_{211} & 3 s_{31} + 3 s_{22} + 3 s_{211} & 
s_{31} + s_{22} + 3 s_{211} + s_{1111} & s_{211} + s_{1111} \\
s_{31} + s_{211}  & s_{31} + s_{22} + 2 s_{211} + s_{1111} & s_{22} + 2 s_{211} + s_{1111} & s_{211} + s_{1111} &
s_{1111}
\end{pmatrix}
\end{footnotesize}
\end{equation*}

These tables have the property 
that if they are rotated $180^{\circ}$, the effect is the same as if the $\omega$
involution were applied to each entry.
The goal of this section is to prove (Corollary~\ref{duality-corollary}) that this is a general
phenomenon.

The action $f \cdot g := \partial(f)(g)$ of superspace on itself yields
a bilinear form on $\QQ[\xx_n, {\bm \theta_n}]$.  More precisely, given
$f, g \in \QQ[\xx_n, {\bm \theta_n}]$ we define a rational number $\langle f, g \rangle \in \QQ$ by 
\begin{equation}
\langle f, g \rangle := \text{constant term of $f \cdot g$} = \text{constant term of $\partial(f)(g)$} .
\end{equation}
In particular, if $f$ and $g$ are homogeneous superpolynomials we have
$\langle f, g \rangle = 0$ unless $f$ and $g$ have the same $x$-degree and the same $\theta$-degree.
%It should be clear that 
%$\langle - , - \rangle: \QQ[\xx_n, {\bm \theta_n}] \times \QQ[\xx_n, {\bm \theta_n}] \rightarrow \QQ$
%is bilinear.  

As an example of the form $\langle - , - \rangle$ we calculate 
$\langle x_1^3 x_2^2 \theta_1 \theta_2, x_1^3 x_2^2 \theta_1 \theta_2 \rangle = - 3! 2!$,
where the minus sign comes from the  
action of the operator $\partial^{\theta}_2$.
In particular, the form $\langle - , - \rangle$ is not positive definite.
However, the form $\langle - , - \rangle$ enjoys a graded version of positive definiteness.

\begin{lemma}
\label{bilinear-lemma}
The bilinear form $\langle - , - \rangle$ defined above is symmetric. Furthermore we have:
\begin{enumerate}
\item  Let $f \in \QQ[\xx_n, {\bm \theta_n}]$ be a $\theta$-homogeneous superpolynomial
of $\theta$-degree $r$. Then
\begin{equation*}
\begin{cases}
\langle f, f \rangle \geq 0 & \text{if $r \equiv 0, 1$ (mod $4$),} \\
\langle f, f \rangle \leq 0 & \text{if $r \equiv 2, 3$ (mod $4$),}
\end{cases}
\end{equation*}
with equality if and only if $f = 0$.
\item For any $\theta$-homogeneous $f, g, f', g' \in \QQ[\xx_n, {\bm \theta_n}]$ we have
\begin{equation*}
\langle f, (f' \cdot g') \cdot g \rangle = \pm \langle f \cdot g, f' \cdot g' \rangle.
\end{equation*}
where the sign is determined by the $\theta$-degrees of $f, g, f', g'$.
\end{enumerate}
\end{lemma}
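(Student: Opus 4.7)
The plan is to compute the form $\langle -, - \rangle$ explicitly on the basis of supermonomials and deduce everything from there. Writing $x^{\aaa} \theta_I := x_1^{a_1} \cdots x_n^{a_n} \theta_{i_1} \cdots \theta_{i_r}$ for $I = \{i_1 < \cdots < i_r\}$, the operator $\partial(x^{\aaa} \theta_I) = \partial_1^{a_1} \cdots \partial_n^{a_n} \partial^{\theta}_{i_1} \cdots \partial^{\theta}_{i_r}$ (all factors commuting by Lemma~\ref{basic-partial-lemma}) splits as a product of $x$- and $\theta$-partials acting on disjoint variable sets. The $x$-half gives $\delta_{\aaa, \mathbf{b}} \cdot a_1! \cdots a_n!$ when applied to $x^{\mathbf{b}}$ and evaluated at $0$; the $\theta$-half peels $\theta_{i_r}, \theta_{i_{r-1}}, \ldots, \theta_{i_1}$ off of $\theta_J$ from the right, which succeeds only if $I = J$ and then contributes the sign $(-1)^{(r-1) + (r-2) + \cdots + 0} = (-1)^{r(r-1)/2}$. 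Hence
\begin{equation*}
\langle x^{\aaa} \theta_I, \ x^{\mathbf{b}} \theta_J \rangle = \delta_{\aaa, \mathbf{b}} \, \delta_{I, J} \, (-1)^{r(r-1)/2} \, a_1! \cdots a_n!,
\end{equation*}
and symmetry of $\langle -, - \rangle$ follows at once from the symmetry of the right-hand side.

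For part (1), I will expand a $\theta$-homogeneous $f = \sum c_{\aaa, I} \, x^{\aaa} \theta_I$ (all $|I| = r$) in the supermonomial basis. Orthogonality then yields
\begin{equation*}
\langle f, f \rangle = (-1)^{r(r-1)/2} \sum_{\aaa, I} c_{\aaa, I}^2 \, a_1! \cdots a_n!,
\end{equation*}
whose second factor is a positive rational number unless $f = 0$. The first factor is $+1$ precisely when $r \equiv 0, 1 \pmod 4$ and $-1$ otherwise, which is exactly the sign pattern claimed.

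For part (2), the key tool will be the adjointness identity $\langle f h, g \rangle = \langle f, h \cdot g \rangle$. This follows from the observation that $f \mapsto \partial(f)$ is an algebra homomorphism from $\QQ[\xx_n, {\bm \theta_n}]$ into its endomorphism algebra: the operators $\partial_i, \partial^{\theta}_i$ satisfy exactly the superspace relations by Lemma~\ref{basic-partial-lemma}, so $\partial(fh) = \partial(f) \circ \partial(h)$ and therefore
\begin{equation*}
\langle fh, g \rangle = \text{constant term of } \partial(f)\partial(h)(g) = \langle f, h \cdot g \rangle.
\end{equation*}
Setting $h := f' \cdot g'$, which is $\theta$-homogeneous of degree $r = \deg_\theta g' - \deg_\theta f'$, and letting $s := \deg_\theta f$, the super-commutativity $fh = (-1)^{rs} hf$ combined with adjointness applied twice and symmetry of the form yields
\begin{equation*}
\langle f, (f' \cdot g') \cdot g \rangle = \langle fh, g \rangle = (-1)^{rs} \langle hf, g \rangle = (-1)^{rs} \langle h, f \cdot g \rangle = (-1)^{rs} \langle f \cdot g, f' \cdot g' \rangle,
\end{equation*}
establishing part (2) with the explicit sign $(-1)^{rs}$ determined by the $\theta$-degrees of $f, f', g'$.

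The only real obstacle is careful bookkeeping of signs: one has to be attentive to the order in which the $\partial^{\theta}_i$'s are applied when peeling off $\theta_J$, and to which super-commutation sign governs the interchange of two $\theta$-homogeneous factors. Once the identification of $\partial$ as a superspace algebra homomorphism into the endomorphism algebra is in hand, however, these bookkeeping issues reduce to standard super-linear-algebra manipulations.
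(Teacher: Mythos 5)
Your computation of the form on supermonomials, and hence your proofs of symmetry and of part (1), coincide with the paper's (the paper records the same formula $\langle m, m'\rangle = \delta_{m,m'}(-1)^{\binom{r}{2}}a_1!\cdots a_n!$ and draws the same conclusions). For part (2), however, you take a genuinely different and much shorter route. The paper argues by brute force: it evaluates $\langle f\cdot g, f'\cdot g'\rangle$ and $\langle f, (f'\cdot g')\cdot g\rangle$ separately on supermonomials, checks that the two nonvanishing conditions on exponent vectors and $\theta$-index sets coincide, and then matches the signs by a lengthy index-tracking computation that ultimately reduces to a parity identity for binomial coefficients. You instead note that $\partial$ is an algebra homomorphism into the endomorphism algebra (exactly what Lemma~\ref{basic-partial-lemma} guarantees), deduce the adjointness $\langle fh, g\rangle = \langle f, h\cdot g\rangle$, and obtain part (2) from two applications of adjointness together with the supercommutation $fh = (-1)^{rs}hf$ and symmetry. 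This is cleaner, produces the sign $(-1)^{rs}$ in closed form, and one can check it agrees with the paper's parity (in the paper's notation both reduce to $(-1)^{r(s-r)}$ on the support of the form). The paper in fact deploys your adjointness manipulation itself in the proof of Theorem~\ref{duality-theorem}, so your argument fits naturally into its framework; what the paper's longer computation buys in exchange is an explicit evaluation of both pairings (including the nonvanishing conditions), which is not needed for the lemma as stated. Your proof is correct.
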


The proof of Lemma~\ref{bilinear-lemma} (2) is notationally cumbersome, but worth it.
This fact will be crucial for our duality result (Theorem~\ref{duality-theorem}) below.

\begin{proof}
Let $m = x_1^{a_1} \cdots x_n^{a_n} \theta_{i_1} \cdots \theta_{i_r}$ and
$m' = x_1^{a'_1} \cdots x_n^{a'_n} \theta_{i'_1} \cdots \theta_{i'_s}$ be two supermonomials
(so that $i_1 < \cdots < i_r$ and $i'_1 < \cdots < i'_s$).  A direct computation gives
\begin{equation}
\label{monomial-inner-product}
\langle m, m' \rangle = \begin{cases}
(-1)^{{r \choose 2}} a_1! \cdots a_n! & \text{if $m = m'$,} \\
0 & \text{otherwise.}
\end{cases}
\end{equation}
This shows that $\langle - , - \rangle$ is symmetric.
The supermonomials are orthogonal with respect to $\langle - , - \rangle$.
Since ${r \choose 2}$ is even when $r \equiv 0, 1$ (mod $4$) and
 odd when $r \equiv 2, 3$ (mod $4$), assertion (1) of the lemma is true.
 
 Assertion (2) is linear in each of the four arguments $f, g, f', g'$ separately, so it suffices 
 to consider the case where $f, g, f', g'$ are supermonomials.  To this end, write
 \begin{center}
 $\begin{array}{cc}
 f = x_1^{a_1} \cdots x_n^{a_n} \theta_{i_1} \cdots \theta_{i_r} &
 f' = x_1^{a'_1} \cdots x_n^{a'_n} \theta_{i'_1} \cdots \theta_{i'_{r'}} \\
  g = x_1^{b_1} \cdots x_n^{b_n} \theta_{j_1} \cdots \theta_{j_s} &
 g' = x_1^{b'_1} \cdots x_n^{b'_n} \theta_{j'_1} \cdots \theta_{j'_{s'}}
 \end{array}$
 \end{center}
 where the sequences $i, i', j, j'$ of $\theta$-subscripts are increasing.
 We introduce the sets of $\theta$-subscripts 
 $I = \{i_1 , \dots , i_r \}, I' = \{i'_1 , \dots , i'_{r'} \}, J = \{j_1 , \dots, j_s \},$
 and $J' = \{j'_1 , \dots , j'_{s'} \}$ and let 
 $a = (a_1, \dots, a_n), a' = (a'_1, \dots, a'_n), b = (b_1, \dots, b_n), 
 b' = (b'_1, \dots, b'_n)$ be the sequences of $x$-exponents.
 If $c = (c_1, \dots, c_n)$ is a vector of nonnegative integers we adopt the
 shorthands $c! := c_1! \cdots c_n!$ and
 $x^c := x_1^{c_1} \cdots x_n^{c_n}$.
 If $K = \{ k_1 < \cdots < k_q\} \subseteq [n]$ we write
 $\theta_K := \theta_{k_1} \cdots \theta_{k_q}$.
 We begin by verifying $|\langle f, (f' \cdot g') \cdot g \rangle| = |\langle f \cdot g, f' \cdot g' \rangle|$
 and check the sign assertion afterward.
 
 We have $f \cdot g = 0$ unless 
 \begin{quote}
 $(\spadesuit)$
 $I \subseteq J$ and
we have the componentwise inequality $a \leq b$.  
\end{quote}
If $(\spadesuit)$ holds we have
\begin{equation*}
f \cdot g = \pm \frac{b!}{(b - a)!}
x^{b-a} \theta_{J - I}
\end{equation*}
where the sign depends on the sets $I$ and $J$. 
Analogous remarks apply to $f' \cdot g'$.
In summary, we see that $\langle f \cdot g, f' \cdot g' \rangle = 0$
unless 
\begin{quote}
$(\heartsuit)$ the condition $(\spadesuit)$ holds and in addition 
we have the set containment $I' \subseteq J'$, the vector inequality $a' \leq b'$,
the set equality $J - I = J' - I'$, and the vector equality
$b - a = b' - a'$.
\end{quote}
If  $(\heartsuit)$ holds, we have
\begin{equation}
\label{inner-product-one}
\langle f \cdot g, f' \cdot g' \rangle = \pm
\frac{b! \cdot b'!}{(b-a)!}.
\end{equation}
%where (by assertion (1) of the lemma) the sign is determined by the common size $|I - J|$ of the
%set $I - J = I' - J'$, and therefore by the $\theta$-degrees of $f, g, f', g'$.

On the other hand, we have $(f' \cdot g') \cdot g = 0$ unless 
\begin{quote}
$(\diamondsuit)$
we have the set containments $I' \subseteq J'$ and
$(J' - I') \subseteq J$ as well as  the vector inequalities $a' \leq b'$
and $(b' - a') \leq b$.
\end{quote}
If $(\diamondsuit)$ holds we have
\begin{align}
(f' \cdot g') \cdot g  &= 
\left( \pm \frac{b'!}{(b' - a')!} x^{b' - a'} \theta_{J' - I'} \right) \cdot g \\
&= \pm \frac{b! \cdot b'!}{(b' - a')! (b - b' + a')!} x^{b - b' + a'} \theta_{J - (J' - I')}.
\end{align}
It follows that $\langle f, (f' \cdot g') \cdot g \rangle = 0$ unless
\begin{quote}
($\clubsuit$)  the condition $(\diamondsuit)$ holds and in addition we have the 
set equality $I = J - (J' - I')$ and the vector equality $a = b - b' + a'$.
\end{quote}
If $(\clubsuit)$ holds then
\begin{equation}
\label{inner-product-two}
\langle f, (f' \cdot g') \cdot g \rangle = 
\pm \frac{b! \cdot b'!}{(b' - a')! (b - b' + a')!} (b - b' + a')! 
= \pm \frac{b! \cdot b'!}{(b-a)!},
\end{equation}
where the second equality used $b' - a' = b - a$.
%and the sign
%(by assertion (1) of the lemma) is determined by the $\theta$-degrees of $f, g, f', g'$.

The equality $|\langle f, (f' \cdot g') \cdot g \rangle| = |\langle f \cdot g, f' \cdot g' \rangle|$
follows by comparing Equation~\eqref{inner-product-one}
with Equation~\eqref{inner-product-two} and checking that conditions $(\heartsuit)$
and $(\clubsuit)$ are equivalent.

Our final task is to verify the sign claim in assertion (2). It suffices to consider the case
where $f, f', g', g'$ are monomials in the
$\theta$-variables which satisfy the condition $(\heartsuit)$ (or the condition $(\clubsuit)$).
That is, we have 
\begin{center}
$f = \theta_I, g = \theta_J, f' = \theta_{I'},$ and $g' = \theta_{J'}$ 
with $I \subseteq J$, $I' \subseteq J'$, and $J - I = J' - I'$.
\end{center}

We  introduce  notation to carefully keep track of signs. Recall that $J = \{j_1 < \cdots < j_s\}$.
Since $I \subseteq J$, there are indices $1 \leq p_1 < \cdots < p_r \leq s$ such that
\begin{equation*}
I = \{ i_1 < \cdots < i_r \} = \{ j_{p_1} < \cdots < j_{p_r} \} \subseteq J = \{j_1 < \cdots <  j_s \}.
\end{equation*}
Let $1 \leq q_1 < \cdots < q_{s-r} \leq s$ be the complement of these indices, i.e.
\begin{equation*}
\{1 < 2 < \dots < s \} - \{p_1 < \cdots < p_r \} = \{ q_1 < \cdots  < q_{s-r} \}.
\end{equation*}
Similarly, define indices $1 \leq p'_1 < \cdots < p'_{r'} \leq s'$ so that 
\begin{equation*}
I' = \{ i'_1 < \cdots < i'_{r'} \} = \{ j'_{p'_1} < \cdots < j_{p'_{r'}} \} \subseteq J = \{j'_1 < \cdots <  j'_{s'} \}
\end{equation*}
and define $1 \leq q'_1 < \cdots < q'_{s'-r'} \leq s'$  by
\begin{equation*}
\{1 < 2 < \dots < s' \} - \{p'_1 < \cdots < p'_{r'} \} = \{ q'_1 < \cdots  < q'_{s'-r'} \}.
\end{equation*}

The left-hand-side of assertion (2) reads
\begin{align}
\langle \theta_I, (\theta_{I'} \cdot \theta_{J'}) \cdot \theta_J \rangle &=
(-1)^{p'_1 + \cdots + p'_{r'} - r'} \langle \theta_I, (\theta_{J'-I'}) \cdot \theta_J \rangle \\
&=
(-1)^{p'_1 + \cdots + p'_{r'} - r'} \langle \theta_I, (\theta_{J-I}) \cdot \theta_J \rangle \\
&=
(-1)^{p'_1 + \cdots + p'_{r'} - r' + q_1 + \cdots + q_{s-r} - s + r} \langle \theta_I, \theta_I \rangle \\
&= 
(-1)^{p'_1 + \cdots + p'_{r'} - r' + q_1 + \cdots + q_{s-r} - s + r + {r \choose 2}}
\end{align}
whereas the right-hand-side of assertion (2) reads
\begin{align}
\langle \theta_I \cdot \theta_J, \theta_{I'} \cdot \theta_{J'} \rangle &=
(-1)^{p_1 + \cdots + p_r - r + p'_1 + \cdots + p'_{r'} - r'} \langle \theta_{J-I}, \theta_{J'-I'} \rangle \\
&= 
(-1)^{p_1 + \cdots + p_r - r + p'_1 + \cdots + p'_{r'} - r'} \langle \theta_{J-I}, \theta_{J-I} \rangle \\
&= 
(-1)^{p_1 + \cdots + p_r - r + p'_1 + \cdots + p'_{r'} - r' + {s-r \choose 2}} 
\end{align}
where we used $J - I = J' - I'$.  The difference between the two exponents of $(-1)$ is
\begin{equation}
\begin{split}
\left[p'_1 + \cdots + p'_{r'} - r' + q_1 + \cdots + q_{s-r} - s + r + {r \choose 2} \right] - \\
\left[p_1 + \cdots + p_r - r + p'_1 + \cdots + p'_{r'} - r' + {s-r \choose 2} \right]
\end{split}
\end{equation}
which equals
\begin{equation}
\label{sign-parity}
q_1 + \cdots + q_{s-r} - s + 2 r + {r \choose 2}   \\
-p_1 - \cdots - p_r  - {s-r \choose 2}.
\end{equation}

Assertion (2) will be proved if we can show that 
the expression \eqref{sign-parity} modulo 2 only depends on $s$ and $r$.
Since ${s \choose 2} = p_1 + \cdots + p_r + q_1 + \cdots + q_{s-r} - s$, \eqref{sign-parity} is
congruent modulo 2 to 
\begin{equation}
\label{sign-parity-goal}
{s \choose 2}  + 2 r + {r \choose 2}   \\
-2p_1 - \cdots - 2p_r  - {s-r \choose 2} \equiv {s \choose 2} + {r \choose 2} + {s - r \choose 2},
\end{equation}
which completes the proof.
\end{proof}

For $0 \leq j \leq n$,
let $\QQ[\xx_n, {\bm \theta}_n]_j$ be the (infinite-dimensional) subspace of 
$\QQ[\xx_n, {\bm \theta}_n]$ consisting of superpolynomials which are $\theta$-homogeneous
of $\theta$-degree $j$.  We have a direct sum decomposition
\begin{equation}
\QQ[\xx_n, {\bm \theta}_n] = \bigoplus_{j = 0}^n \QQ[\xx_n, {\bm \theta}_n]_j.
\end{equation}
The bilinear form $\langle - , - \rangle$ on 
$\QQ[\xx_n, {\bm \theta}_n]$ may be restricted to
$\QQ[\xx_n, {\bm \theta}_n]_j$ for any $j$.
Lemma~\ref{bilinear-lemma} (1) states that this restriction will be positive definite if 
${j \choose 2}$ is even and negative definite if ${j \choose 2}$ is odd.

\begin{lemma}
\label{perp-lemma}
Suppose $V \subseteq \QQ[\xx_n, {\bm \theta}_n]_j$ is a linear subspace (for some fixed $0 \leq j \leq n$)
which is  $x$-homogeneous (i.e. for any $f \in V$, all of the $x$-homogeneous components of $f$ are 
contained in $V$).
Define a new subspace $V^{\perp} \subseteq \QQ[\xx_n, {\bm \theta}_n]_j$ by
\begin{equation}
V^{\perp} = \{ f \in \QQ[\xx_n, {\bm \theta}_n]_j \,:\, \langle f, g \rangle = 0 \text{ for all $g \in V$} \}.
\end{equation}
We have the direct sum of vector spaces $\QQ[\xx_n, {\bm \theta}_n]_j = V \oplus V^{\perp}$.
\end{lemma}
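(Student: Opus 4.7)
The plan is to reduce the infinite-dimensional claim to the finite-dimensional case by decomposing everything according to $x$-degree, and then invoking Lemma~\ref{bilinear-lemma} (1) to conclude that the form is definite (hence non-degenerate) on each $x$-homogeneous piece.

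First I would observe that $\langle f, g \rangle = 0$ whenever $f$ and $g$ are $x$-homogeneous of different $x$-degrees. Indeed, $\partial(f)$ is a differential operator whose monomial terms all lower $x$-degree by $\deg_x(f)$, so $f \cdot g$ has $x$-degree $\deg_x(g) - \deg_x(f)$; for the constant term to be nonzero we need $\deg_x(f) = \deg_x(g)$. Consequently, writing
\begin{equation*}
\QQ[\xx_n, {\bm \theta}_n]_j = \bigoplus_{d \geq 0} \QQ[\xx_n, {\bm \theta}_n]_{d,j},
\end{equation*}
where $\QQ[\xx_n, {\bm \theta}_n]_{d,j}$ is the (finite-dimensional) piece of $x$-degree $d$ and $\theta$-degree $j$, the decomposition is orthogonal with respect to $\langle -, - \rangle$.

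Next, since $V$ is $x$-homogeneous by hypothesis, it decomposes as $V = \bigoplus_d V_d$ with $V_d := V \cap \QQ[\xx_n, {\bm \theta}_n]_{d,j}$. I claim correspondingly that $V^{\perp} = \bigoplus_d V_d^{\perp}$, where $V_d^{\perp}$ is the orthogonal complement of $V_d$ taken inside the finite-dimensional space $\QQ[\xx_n, {\bm \theta}_n]_{d,j}$. Any $f \in \QQ[\xx_n, {\bm \theta}_n]_j$ is a finite sum $f = \sum_d f_d$ with $f_d \in \QQ[\xx_n, {\bm \theta}_n]_{d,j}$; by the orthogonality of the $x$-grading and the $x$-homogeneity of $V$, the condition $\langle f, g \rangle = 0$ for all $g \in V$ is equivalent to $\langle f_d, g_d \rangle = 0$ for every $d$ and every $g_d \in V_d$, proving the claim.

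It therefore suffices to prove the finite-dimensional statement $\QQ[\xx_n, {\bm \theta}_n]_{d,j} = V_d \oplus V_d^{\perp}$ for each $d$. By Lemma~\ref{bilinear-lemma} (1), the restriction of $\langle - , - \rangle$ to the $\theta$-homogeneous subspace $\QQ[\xx_n, {\bm \theta}_n]_j$ is either positive definite or negative definite (depending on the parity of $\binom{j}{2}$), and this definiteness is inherited by the finite-dimensional subspace $\QQ[\xx_n, {\bm \theta}_n]_{d,j}$. A definite symmetric bilinear form on a finite-dimensional space yields the usual orthogonal decomposition $W = U \oplus U^{\perp}$ for every subspace $U \subseteq W$, and applying this with $W = \QQ[\xx_n, {\bm \theta}_n]_{d,j}$ and $U = V_d$ completes the proof. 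No step is a real obstacle; the only subtlety is bookkeeping the infinite-dimensional ambient space via its $x$-grading, which the hypothesis of $x$-homogeneity of $V$ is precisely designed to enable.
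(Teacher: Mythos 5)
Your proof is correct and follows essentially the same route as the paper: decompose by $x$-degree into finite-dimensional pieces, apply the definiteness from Lemma~\ref{bilinear-lemma}~(1) on each piece, and reassemble. You are in fact slightly more careful than the paper in explicitly checking that $\langle -,-\rangle$ vanishes between distinct $x$-degrees, which is the point needed to identify $V^{\perp}$ with $\bigoplus_d V_d^{\perp}$.
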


\begin{proof}
For $j \geq 0$, let $\QQ[\xx_n, {\bm \theta}_n]_{i,j}$ be the finite-dimensional subspace 
of $\QQ[\xx_n, {\bm \theta}_n]_i$ which is $x$-homogeneous of $x$-degree $i$ and let
$V_i := V \cap \QQ[\xx_n, {\bm \theta}_n]_{i,j}$.  By the assumption on $V$ we have
$V = \bigoplus_{i \geq 0} V_i$.  If we set 
\begin{equation}
V_i^{\perp} = \{ f \in \QQ[\xx_n, {\bm \theta}_n]_{i,j} \,:\, \langle f, g \rangle = 0 \text{ for all $g \in V_i$} \},
\end{equation}
Lemma~\ref{bilinear-lemma} (1) implies $\QQ[\xx_n, {\bm \theta}_n]_{i,j} = V_i \oplus V_i^{\perp}$.
Taking a direct sum over $j \geq 0$ gives the result.
\end{proof}

Informally, Lemma~\ref{perp-lemma} says that we can take orthogonal complements in superspace
as long as the subspaces in question are concentrated in a single $\theta$-degree.
This will play a key role in proving the following result.

\begin{theorem}
\label{duality-theorem}
Let $0 \leq r \leq n$ and let $\aaa = (a_1, \dots, a_r) \in \ZZ_{\geq 0}^r$.  Define a map
\begin{equation}
\iota: W_n(\aaa) \rightarrow W_n(\aaa)
\end{equation}
by the rule $\iota(f) := f \cdot \Delta_n(\aaa) = \partial(f)(\Delta_n(a))$.

The map $\iota$ is a linear automorphism of $W_n(\aaa)$ which complements both 
$x$-degree and $\theta$-degree simultaneously.  Furthermore, the map $\iota$ satisfies
\begin{equation}
\iota(w \cdot f) = \sign(w) \iota(f) \quad \text{for all $w \in \symm_n$ and $f \in W_n(\aaa)$.}
\end{equation}
\end{theorem}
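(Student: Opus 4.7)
The plan is to establish three separate claims: (i) that $\iota$ is a well-defined endomorphism of $W_n(\aaa)$ which complements both gradings, (ii) the sign-equivariance identity, and (iii) that $\iota$ is bijective. Parts (i) and (ii) will be essentially formal, whereas (iii) is where the bilinear form $\langle -,- \rangle$ and Lemma~\ref{bilinear-lemma} do the real work.

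For (i), Definition~\ref{double-cone-definition} ensures that $W_n(\aaa)$ is closed under every $\partial_i$ and every $\partial^{\theta}_i$, so $\iota(f) = \partial(f)(\Delta_n(\aaa))$ indeed lies in $W_n(\aaa)$; since $\Delta_n(\aaa)$ is bihomogeneous of $x$-degree $d_x := |\aaa| + \binom{k}{2}$ and $\theta$-degree $r$, and each $\partial_i$ (respectively $\partial^{\theta}_i$) lowers the corresponding degree by one, a bihomogeneous $f$ of bidegree $(i,j)$ is mapped to bidegree $(d_x - i, r - j)$. For (ii), the commutation relations of Lemma~\ref{basic-partial-lemma} propagate through the substitution defining $\partial(\cdot)$ to yield $\partial(w \cdot f) = w \cdot \partial(f) \cdot w^{-1}$, and combining with the alternating property $w^{-1} \cdot \Delta_n(\aaa) = \sign(w) \Delta_n(\aaa)$ gives
\[
\iota(w \cdot f) = w \cdot \partial(f) \cdot w^{-1}(\Delta_n(\aaa)) = \sign(w) \cdot w \cdot \iota(f),
\]
exactly parallel to the computation in the proof of Proposition~\ref{annihilator-twist}.

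For (iii), I first note that $W_n(\aaa)$ is finite-dimensional (bounded in both $x$- and $\theta$-degree) and $\theta$-graded, so the bijectivity of $\iota$ reduces to the injectivity of each restriction $\iota\colon W_n(\aaa)_j \to W_n(\aaa)_{r-j}$, where $W_n(\aaa)_j$ denotes the $\theta$-homogeneous slice of degree $j$; injectivity on both $W_n(\aaa)_j$ and $W_n(\aaa)_{r-j}$ together with finite-dimensionality then forces both restrictions to be isomorphisms. Suppose $f \in W_n(\aaa)_j$ satisfies $\iota(f) = 0$. Because $W_n(\aaa)$ is the cyclic $\QQ[\xx_n, \bm{\theta}_n]$-submodule generated by $\Delta_n(\aaa)$, I may write $f = h \cdot \Delta_n(\aaa)$ with $h$ $\theta$-homogeneous of $\theta$-degree $r - j$, replacing any preimage by its component of the appropriate $\theta$-degree if necessary. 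Applying Lemma~\ref{bilinear-lemma}(2) with the four-fold substitution $f = f' = h$ and $g = g' = \Delta_n(\aaa)$ gives
\[
\langle h, (h \cdot \Delta_n(\aaa)) \cdot \Delta_n(\aaa) \rangle = \pm \langle h \cdot \Delta_n(\aaa),\, h \cdot \Delta_n(\aaa) \rangle.
\]
The left-hand side equals $\langle h, \iota(f) \rangle = 0$, so the right-hand side $\pm \langle f, f \rangle$ vanishes; the (anti-)definiteness in Lemma~\ref{bilinear-lemma}(1), applicable because $f$ is $\theta$-homogeneous, then forces $f = 0$.

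The main obstacle is selecting the correct instance of the adjointness relation in Lemma~\ref{bilinear-lemma}(2) that converts the hypothesis $\iota(f) = 0$ into the self-pairing $\langle f, f \rangle = 0$. The sign in that identity is immaterial because we only need the right-hand side to vanish, but the $\theta$-homogeneity of both $f$ and $h$ is essential, since both parts of Lemma~\ref{bilinear-lemma} apply only in the $\theta$-homogeneous setting; this is precisely why the initial reduction to the slices $W_n(\aaa)_j$ is indispensable.
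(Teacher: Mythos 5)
Your proof is correct, and it reorganizes the bijectivity argument in a way that genuinely differs from the paper's. The paper proves \emph{surjectivity} of $\iota$: it first establishes the claim $I_n(\aaa)_j = W_n(\aaa)_j^{\perp}$ inside $\QQ[\xx_n,\bm{\theta}_n]_j$, invokes Lemma~\ref{perp-lemma} to obtain the direct sum $\QQ[\xx_n,\bm{\theta}_n]_j = W_n(\aaa)_j \oplus I_n(\aaa)_j$, and deduces $W_n(\aaa)_{r-j} = \QQ[\xx_n,\bm{\theta}_n]_j\cdot\Delta_n(\aaa) = W_n(\aaa)_j\cdot\Delta_n(\aaa)$. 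You instead prove \emph{injectivity} of each restriction $W_n(\aaa)_j\to W_n(\aaa)_{r-j}$ and close with a dimension count. The engine is identical in both cases --- Lemma~\ref{bilinear-lemma}(2) with all four arguments specialized to $h$, $h$, $\Delta_n(\aaa)$, $\Delta_n(\aaa)$, followed by the graded (anti-)definiteness of Lemma~\ref{bilinear-lemma}(1) --- so the arguments are close cousins, but yours is leaner: it bypasses Lemma~\ref{perp-lemma} and the identification of $I_n(\aaa)_j$ with the orthogonal complement entirely. What the paper's longer route buys is the decomposition \eqref{main-direct-sum}, which is reused verbatim in the proof of Corollary~\ref{poincare-corollary}; with your argument one would still need to supply the paper's Claim separately to recover that byproduct. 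One further remark: your computation yields $\iota(w\cdot f) = \sign(w)\, w\cdot\iota(f)$, which is the correct $\sign$-twisted equivariance and is what Corollary~\ref{duality-corollary} actually requires; the displayed equation in the theorem statement omits the $w\cdot$ on the right-hand side (evidently a typo), and the paper's own proof does not spell this part out at all, so your explicit verification is a small improvement rather than a discrepancy.
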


\begin{proof}
The remaining statements of the theorem will follow immediately if we can show that
$\iota$ is bijective.
Since $\iota$ is $\QQ$-linear and $W_n(\aaa)$ is finite-dimensional, it is enough to check that 
$\iota$ is surjective.  

For $0 \leq j \leq r$, let $W_n(\aaa)_j$ be the $\theta$-homogeneous piece of 
$W_n(\aaa)$ of $\theta$-degree $j$.
Similarly, let $I_n(\aaa)_j$ for the $\theta$-homogeneous piece of $I_n(\aaa)$ of $\theta$-degree $j$.
We have the direct sum decompositions
\begin{equation}
W_n(\aaa) = \bigoplus_{j = 0}^r W_n(\aaa)_j \quad \text{and} \quad
I_n(\aaa) = \bigoplus_{j = 0}^r I_n(\aaa)_j.
\end{equation}

Fix $0 \leq j \leq r$.  Let $\QQ[\xx_n, {\bm \theta_n}]_j$ be the space 
of superpolynomials which are $\theta$-homogeneous of $\theta$-degree $j$.
Consider the orthogonal complement 
\begin{equation}
W_n(\aaa)_j^{\perp} = \{ f \in \QQ[\xx_n, {\bm \theta_n}]_j \,:\, \langle f, g \rangle = 0 \text{ for all $g \in W_n(\aaa)_j$}\}.
\end{equation}

\noindent
{\bf Claim:}  
{\em We have $I_n(\aaa)_j = W_n(\aaa)^{\perp}_j$.}

To prove the Claim, we consider both containments separately. For the containment $\subseteq$,
let $f \in I_n(\aaa)_j$ and $g \in W_n(\aaa)_j$.
There exists $h \in \QQ[\xx_n, {\bm \theta}_n]$ such that $g = h \cdot \Delta_n(\aaa)$.
After discarding redundant terms if necessary, we may assume that $h$ is $\theta$-homogeneous.
We have
\begin{equation}
f \cdot g = f \cdot (h \cdot \Delta_n(\aaa)) = \partial(f h) (\Delta_n(\aaa)) = \pm 
\partial(h f) (\Delta_n(\aaa)) = \pm \partial(h) (f \cdot \Delta_n(\aaa)) = 0,
\end{equation}
where the third equality used the $\theta$-homogeneity of $f$ and $h$.
Taking the constant term gives $\langle f, g \rangle = 0$ so that $f \in W_n(\aaa)^{\perp}_j$.

Now we prove the containment $\supseteq$.  Let $f \in W_n(\aaa)^{\perp}_j$.  
We want to show that $f \cdot \Delta_n(\aaa) = 0$.
Since both $f$ and
$\Delta_n(\aaa)$ are $\theta$-homogeneous, Lemma~\ref{bilinear-lemma} (1) implies that 
\begin{equation}
f \cdot \Delta_n(\aaa) = 0 \text{ if and only if } 
\langle f \cdot \Delta_n(\aaa), f \cdot \Delta_n(\aaa) \rangle = 0
\end{equation}
On the other hand, Lemma~\ref{bilinear-lemma} (2) implies that 
\begin{equation}
\langle f \cdot \Delta_n(\aaa), f \cdot \Delta_n(\aaa) \rangle = 
\pm \langle f, (f \cdot \Delta_n(\aaa)) \cdot \Delta_n(\aaa) \rangle = 0,
\end{equation}
where the second equality used $f \in W_n(\aaa)_j^{\perp}$.  This completes the proof of the Claim.

We proceed to prove that the map $\iota$ is surjective, which will prove the Theorem.  
Fix $0 \leq j \leq r$.  We prove that $W_n(\aaa)_{r - j}$ is contained in the image $\mathrm{Image}(\iota)$
of $\iota$.
By the definition of $W_n(\aaa)$, we have 
\begin{equation}
\label{known-equality-image-iota}
W_n(\aaa)_{r - j} = \QQ[\xx_n, {\bm \theta_n}]_j \cdot \Delta_n(\aaa) = 
\{ f \cdot \Delta_n(\aaa) \,:\, f \in \QQ[\xx_n, {\bm \theta_n}]_j \}.
\end{equation}
The desired containment $W_n(\aaa)_{r-j} \subseteq \mathrm{Image}(\iota)$ is equivalent to the 
seemingly stronger statement
\begin{equation}
\label{goal-equality-image-iota}
W_n(\aaa)_{r - j} = W_n(\aaa)_j \cdot \Delta_n(\aaa) = 
\{ f \cdot \Delta_n(\aaa) \,:\, f \in W_n(\aaa)_j \}.
\end{equation}
However, Lemma~\ref{perp-lemma} and our Claim give the direct sum decomposition
\begin{equation}
\label{main-direct-sum}
\QQ[\xx_n, {\bm \theta_n}]_j = W_n(\aaa)_j \oplus W_n(\aaa)^{\perp}_j = W_n(\aaa)_j \oplus I_n(\aaa)_j.
\end{equation}
Since $I_n(\aaa)_j$ annihilates $\Delta_n(\aaa)$, 
\eqref{known-equality-image-iota} and \eqref{goal-equality-image-iota} are equivalent.
\end{proof}

The map $\iota$ gives our desired duality immediately.

\begin{corollary}
\label{duality-corollary}
Let $0 \leq r \leq n$ and let $\aaa = (a_1, \dots, a_n) \in \ZZ_{\geq 0}^r$. The module 
$W_n(\aaa)$ has the duality property
\begin{equation}
\omega (\grFrob(W_n(\aaa); q, z)) = (\rev_q \circ \rev_z) \grFrob(W_n(\aaa); q, z).
\end{equation}
\end{corollary}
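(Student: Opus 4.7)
The plan is to deduce Corollary \ref{duality-corollary} as an essentially formal consequence of Theorem \ref{duality-theorem}. The map $\iota \colon W_n(\aaa) \to W_n(\aaa)$ built there is a linear automorphism which simultaneously reverses both degrees and twists by the sign character, so its existence forces the stated symmetry on $\grFrob(W_n(\aaa); q, z)$ directly.

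More precisely, let $N := a_1 + \cdots + a_r + \binom{k}{2}$ be the $x$-degree of $\Delta_n(\aaa)$; by definition $\Delta_n(\aaa)$ is $\theta$-homogeneous of $\theta$-degree $r$. For any bihomogeneous $f \in \QQ[\xx_n, \bm{\theta}_n]$ of bidegree $(i, j)$, the operator $\partial(f)$ lowers $x$-degree by $i$ and $\theta$-degree by $j$, so $\iota$ sends the bihomogeneous piece $W_n(\aaa)_{i,j}$ into $W_n(\aaa)_{N-i,\, r-j}$. Since $W_n(\aaa)$ is finite-dimensional and $\iota$ is bijective, each restriction
\begin{equation*}
\iota \colon W_n(\aaa)_{i,j} \longrightarrow W_n(\aaa)_{N-i,\,r-j}
\end{equation*}
is a $\QQ$-linear isomorphism. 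In particular, $(N, r)$ is the top bidegree appearing in $W_n(\aaa)$ (as the bottom piece $W_n(\aaa)_{0,0}$ is certainly nonzero, being spanned by any nonzero element $f$ with $f \cdot \Delta_n(\aaa) = \Delta_n(\aaa)$), and the composite $\rev_q \circ \rev_z$ acts on $\grFrob(W_n(\aaa); q, z)$ by the substitution $q^i z^j \mapsto q^{N-i} z^{r-j}$.

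The final step invokes the sign twist. Theorem \ref{duality-theorem} gives $\iota(w \cdot f) = \sign(w) \cdot \iota(f)$ for $w \in \symm_n$, so the isomorphism above upgrades to an isomorphism of $\symm_n$-modules $W_n(\aaa)_{N-i,\,r-j} \cong \sign \otimes W_n(\aaa)_{i,j}$. Passing to Frobenius images and using the fact that tensoring with the sign representation corresponds to applying the involution $\omega$, we obtain
\begin{equation*}
\Frob(W_n(\aaa)_{N-i,\,r-j}) = \omega \, \Frob(W_n(\aaa)_{i,j})
\end{equation*}
for every $(i, j)$. Multiplying by $q^{N-i} z^{r-j}$, summing over $(i, j)$, and reindexing gives exactly the asserted identity $\omega(\grFrob(W_n(\aaa); q, z)) = (\rev_q \circ \rev_z)\, \grFrob(W_n(\aaa); q, z)$.

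There is no real obstacle here: all the substance was absorbed into Theorem \ref{duality-theorem}, and the corollary is just a bookkeeping translation between the language of the map $\iota$ (bijection that reverses bidegree and sign-twists) and the language of symmetric functions (reversal of $q$ and $z$ together with $\omega$). The only minor care needed is confirming that the bottom piece $W_n(\aaa)_{0,0}$ is nontrivial, which is immediate from $\iota$ carrying the top piece generated by $\Delta_n(\aaa)$ back down to it.
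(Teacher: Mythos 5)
Your proof is correct and takes exactly the same route as the paper, whose entire argument is the one-line observation that the automorphism $\iota$ of Theorem~\ref{duality-theorem} reverses both degrees and twists by sign; you have simply spelled out the bookkeeping (the bidegree shift $(i,j)\mapsto(N-i,r-j)$, the resulting isomorphisms $W_n(\aaa)_{N-i,r-j}\cong \sign\otimes W_n(\aaa)_{i,j}$, and the translation to $\omega$ and $\rev_q\circ\rev_z$ on Frobenius images) that the paper leaves implicit.
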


\begin{proof}
The automorphism $\iota$ of Theorem~\ref{duality-theorem} reverses 
both $x$-degree and $\theta$-degree, as well as twisting by the sign representation.
\end{proof}

In Corollary~\ref{duality-corollary} the operator $\rev_q$ acts on formal power series in 
$\QQ[[q, z, x_1, x_2, \dots ]]$ with finite $q$-degree by regarding them
as polynomials in $\QQ[[z, x_1, x_2, \dots ]][q]$.  A similar remark applies to $\rev_z$.
Corollary~\ref{duality-corollary} immediately shows that $W_n(\aaa)$ and $R_n(\aaa)$
are isomorphic as bigraded $\symm_n$-modules.

\begin{corollary}
\label{super-annihilator-twist}
For any $0 \leq r \leq n$ and any sequence $\aaa \in (\ZZ_{\geq 0})^r$ we have 
\begin{equation}
\grFrob(R_n(\aaa); q, z) =
\grFrob(W_n(\aaa); q, z)
\end{equation}
where $q$ tracks $x$-degree and $z$ tracks $\theta$-degree.
\end{corollary}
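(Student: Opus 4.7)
The plan is to produce an explicit bigraded vector space isomorphism between $R_n(\aaa)$ and $W_n(\aaa)$ that twists by the sign representation and reverses both gradings, and then to cancel out those twists using the duality already established in Corollary~\ref{duality-corollary}.

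First I would consider the $\QQ$-linear map
\begin{equation*}
\varphi: \QQ[\xx_n, {\bm \theta_n}] \longrightarrow W_n(\aaa), \qquad f \longmapsto f \cdot \Delta_n(\aaa) = \partial(f)(\Delta_n(\aaa)).
\end{equation*}
By the very definition of $W_n(\aaa)$ as the cyclic $\QQ[\xx_n,{\bm\theta_n}]$-submodule generated by $\Delta_n(\aaa)$, the map $\varphi$ is surjective. By the very definition of $I_n(\aaa)$, the kernel of $\varphi$ is precisely $I_n(\aaa)$. Thus $\varphi$ descends to a $\QQ$-linear bijection $\overline{\varphi}: R_n(\aaa) \xrightarrow{\sim} W_n(\aaa)$.

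Next I would verify how $\overline{\varphi}$ interacts with gradings and the $\symm_n$-action. Since $\partial_i$ lowers $x$-degree by $1$ and $\partial^{\theta}_i$ lowers $\theta$-degree by $1$, and since $\Delta_n(\aaa)$ has fixed bidegree, passing a bihomogeneous $f$ of bidegree $(i,j)$ through $\varphi$ produces an element of bidegree equal to the bidegree of $\Delta_n(\aaa)$ minus $(i,j)$; hence $\overline{\varphi}$ simultaneously reverses both gradings. For the symmetric group action, using the identity $\partial(w \cdot f) = w \cdot \partial(f) \cdot w^{-1}$ from Lemma~\ref{basic-partial-lemma} together with the alternating property $w \cdot \Delta_n(\aaa) = \sign(w)\Delta_n(\aaa)$, a direct computation gives $\overline{\varphi}(w \cdot f) = \sign(w) \cdot w \cdot \overline{\varphi}(f)$, so $\overline{\varphi}$ twists the $\symm_n$-action by the sign representation. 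Consequently,
\begin{equation*}
\grFrob(R_n(\aaa); q, z) = (\rev_q \circ \rev_z \circ \omega)\, \grFrob(W_n(\aaa); q, z).
\end{equation*}

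Finally I would invoke Corollary~\ref{duality-corollary}, which asserts $\omega\, \grFrob(W_n(\aaa); q, z) = (\rev_q \circ \rev_z)\, \grFrob(W_n(\aaa); q, z)$. Substituting into the previous display, the two copies of $\rev_q \circ \rev_z$ cancel, yielding $\grFrob(R_n(\aaa); q, z) = \grFrob(W_n(\aaa); q, z)$, which is the claim. No genuine obstacle is expected here: the corollary is essentially a bookkeeping consequence of Theorem~\ref{duality-theorem}, and the only substantive check is the sign-twist computation for $\overline{\varphi}$, which follows at once from the operator identities in Lemma~\ref{basic-partial-lemma} and the fact that $\Delta_n(\aaa)$ is alternating.
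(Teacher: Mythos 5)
Your proposal is correct and follows essentially the same route as the paper: the paper likewise notes that the argument of Proposition~\ref{annihilator-twist} yields $\grFrob(R_n(\aaa);q,z) = (\rev_q \circ \rev_z \circ \omega)\grFrob(W_n(\aaa);q,z)$ and then cancels the twist via Corollary~\ref{duality-corollary}. Your write-up just makes the sign-twist and degree-reversal bookkeeping explicit.
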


\begin{proof}
The same argument used to prove Proposition~\ref{annihilator-twist} shows that 
\begin{equation}
\grFrob(\QQ[\xx_n, {\bm \theta_n}]/I_n(\aaa) ; q, z) =
(\rev_q \circ \rev_z \circ \omega) \grFrob(W_n(\aaa); q, z).
\end{equation}
By Corollary~\ref{duality-corollary} the operator
$(\rev_q \circ \rev_z \circ \omega)$ leaves $\grFrob(W_n(\aaa);q,z)$ unchanged.
\end{proof}

Our duality gives us
another model for the 
quotient rings $R_{n,k}$ of \cite{HRS}.

\begin{corollary}
Let $r, k \geq 0$ with $n = k + r$.  Let $\aaa = (k-1, \dots , k-1) \in (\ZZ_{\geq 0})^n$ be a length $r$
sequence of $(k-1)$'s.  
Let $W_n(\aaa)_0$ be the subspace of $W_n(\aaa)$ of $\theta$-degree zero.  
Then $W_n(\aaa)_0 \cong R_{n,k}$ as singly graded $\symm_n$-modules.
\end{corollary}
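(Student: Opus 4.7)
The plan is to read off the Frobenius image of $W_n(\aaa)_0$ from the known Frobenius image of $V_n(\aaa) = W_n(\aaa)_r$ by invoking the rotational duality of Corollary~\ref{duality-corollary}, and then recognize the result as the Frobenius image of $R_{n,k}$.

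First, I would observe that because applying any partial derivative $\partial_i$ or $\partial^{\theta}_i$ can only decrease bidegree, the top $x$-degree attained anywhere in $W_n(\aaa)$ is the $x$-degree of $\Delta_n(\aaa)$ itself, namely $D := r(k-1) + \binom{k}{2}$, and the top $\theta$-degree attained is $r$. Both extrema are realized in the subspace $V_n(\aaa) = W_n(\aaa)_r$ (they are realized by $\Delta_n(\aaa)$, which has pure $\theta$-degree $r$). Thus, writing $\grFrob(W_n(\aaa); q, z) = \sum_{d,j} F_{d,j} \, q^d z^j$, the support of the bigrading lies in $[0,D] \times [0,r]$, and Corollary~\ref{duality-corollary} yields the bidegree-by-bidegree identity
\begin{equation*}
\omega(F_{d,j}) = F_{D-d,\, r-j} \quad \text{for all } 0 \leq d \leq D, \, 0 \leq j \leq r.
\end{equation*}

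Specializing to $j = 0$, the formula becomes $\omega(F_{d,0}) = F_{D-d,\, r}$, i.e.
\begin{equation*}
\omega\, \grFrob(W_n(\aaa)_0; q) = \rev_q \, \grFrob(V_n(\aaa); q).
\end{equation*}
Applying $\omega$ to both sides (and using that $\omega$ and $\rev_q$ commute), I would rewrite this as
\begin{equation*}
\grFrob(W_n(\aaa)_0; q) = (\rev_q \circ \omega)\, \grFrob(V_n(\aaa); q).
\end{equation*}

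Now Theorem~\ref{delta-vandermonde-theorem} identifies $\grFrob(V_n(\aaa); q) = C_{n,k}(\xx; q)$, and Equation~\eqref{r-graded-frobenius} gives $\grFrob(R_{n,k}; q) = (\rev_q \circ \omega) C_{n,k}(\xx; q)$. Chaining these equalities yields $\grFrob(W_n(\aaa)_0; q) = \grFrob(R_{n,k}; q)$. Since a graded $\symm_n$-module over $\QQ$ is determined up to isomorphism by its graded Frobenius image, this proves $W_n(\aaa)_0 \cong R_{n,k}$ as singly graded $\symm_n$-modules.

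There is no serious obstacle here; the argument is a purely formal consequence of the two deep inputs already at our disposal (Theorem~\ref{delta-vandermonde-theorem} and Corollary~\ref{duality-corollary}). The only bookkeeping to be careful about is that the $x$-degree reversal parameter is the same on both sides, which is why I verified above that $V_n(\aaa)$ achieves the full top $x$-degree $D$ of $W_n(\aaa)$.
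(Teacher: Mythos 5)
Your proposal is correct and is exactly the paper's argument: the paper's proof is the one-line "Combine Theorem~\ref{delta-vandermonde-theorem} and Corollary~\ref{duality-corollary}," and you have simply supplied the routine bookkeeping (matching the top $x$-degree $D$ on both sides and chaining through Equation~\eqref{r-graded-frobenius}) that this combination entails. No gaps.
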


\begin{proof}
Combine Theorem~\ref{delta-vandermonde-theorem} and Corollary~\ref{duality-corollary}.
\end{proof}

\subsection{Poincar\'e duality}
In this subsection we prove that the bigraded rings $R_n(\aaa)$ exhibit an algebraic
structure which is reminiscent of Poincar\'e duality and propose the problem
of finding a geometric explanation for this fact.

Consider Corollaries~\ref{duality-corollary} and \ref{super-annihilator-twist}
in the case $r = 0$, so that $\aaa = \varnothing$ is the empty sequence and $\Delta_n(\aaa) = \Delta_n$
is the classical Vandermonde.  In this case $R_{n,k} = R_n$ is the classical coinvariant algebra and these
corollaries give the classical result
\begin{equation}
\grFrob(R_n; q) = (\rev_q \circ \omega) \grFrob(R_n; q)
\end{equation}
which implies the palindromicity of the Hilbert series
\begin{equation}
\Hilb(R_n; q) = [n]!_q = (1+q)(1+q+q^2) \cdots (1+q + \cdots + q^{n-1}).
\end{equation}

The palindromicity of $\Hilb(R_n;q) = [n]!_{q}$ has a geometric interpretation.
Let $\mathcal{F \ell}_n$ be the variety of complete flags in $\CC^n$.
Borel \cite{Borel} proved that the rational cohomology ring $H^{\bullet}(\mathcal{F \ell}_n)$
can be presented as the coinvariant ring $R_n$.
Since $\mathcal{F \ell}_n$ is a smooth compact complex projective variety,
the top cohomology 
\begin{equation}
 H^{\mathrm{top}}(\mathcal{F \ell}_n) = H^{n(n-1)}(\mathcal{F \ell}_n) \cong \QQ
 \end{equation}
is a 1-dimensional vector space and for any $0 \leq d \leq n(n-1)$ the cup product
\begin{equation}
H^{d}(\mathcal{F \ell}_n) \otimes H^{n(n-1) - d}(\mathcal{F \ell}_n) \rightarrow \QQ
\end{equation}
is a perfect pairing by Poincar\'e duality.

The variety 
\begin{equation*}
X_{n,k} = \{ (\ell_1, \dots, \ell_n) \,:\, \ell_i \subseteq \CC^k, \, \dim(\ell_i) = 1, \, \ell_1 + \cdots + \ell_n = \CC^k \}
\end{equation*}
of spanning configurations of $n$ lines in $\CC^k$ 
introduced in \cite{PR} is not compact, and so does not satisfy the hypotheses of 
Poincar\'e duality.
Indeed,  
 the Hilbert series of its cohomology
\begin{equation}
\Hilb(H^{\bullet}(X_{n,k}); \sqrt{q}) = \Hilb(R_{n,k}; q) = \rev_q([k]! \cdot  \Stir_q(n,k))
\end{equation}
is not palindromic.
However,
$H^{\bullet}(X_{n,k}) = R_{n,k}$ is a $1$-dimensional slice of a $2$-dimensional self-dual object.

\begin{corollary}
\label{poincare-corollary}
Let $0 \leq r \leq n$ and let $\aaa = (a_1, \dots, a_r) \in (\ZZ)_{\geq 0}^r$.  
The top $x$-degree component is $R_n(\aaa)$ is $s := a_1 + \cdots + a_r + {k \choose 2}$
and the top $\theta$-degree component is $r$.
Write $R_n(\aaa)_{i,j}$ for the component of $R_n(\aaa)$ of $x$-degree $i$ and 
$\theta$-degree $j$.
The component $R_n(\aaa)_{r,s} \cong \QQ$ is a $1$-dimensional vector space.

For any $0 \leq i \leq s$ and any $0 \leq j \leq r$ the multiplication map
\begin{equation}
R_n(\aaa)_{i,j} \otimes R_n(\aaa)_{s-i,r-j} \rightarrow \QQ
\end{equation}
is a perfect pairing.
\end{corollary}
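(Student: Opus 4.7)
The plan is to transport the problem to $W_n(\aaa)$ via the isomorphism $\Phi: R_n(\aaa) \xrightarrow{\sim} W_n(\aaa)$ of bigraded $\symm_n$-modules from Corollary~\ref{super-annihilator-twist}, which sends $\bar f \mapsto f \cdot \Delta_n(\aaa) = \partial(f)(\Delta_n(\aaa))$ and therefore carries $R_n(\aaa)_{i,j}$ bijectively onto $W_n(\aaa)_{s-i,r-j}$. The top-piece claim $R_n(\aaa)_{s,r} \cong \QQ$ reduces to showing $W_n(\aaa)_{0,0} = \QQ$: any element of $W_n(\aaa)_{0,0}$ has the form $\partial(f)(\Delta_n(\aaa))$ with $f$ of bidegree $(s,r)$, hence is a scalar, and taking $f = \Delta_n(\aaa)$ produces $\langle \Delta_n(\aaa), \Delta_n(\aaa)\rangle$, which is nonzero by Lemma~\ref{bilinear-lemma}(1).

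The key algebraic observation for the pairing is that Lemma~\ref{basic-partial-lemma} turns $f \mapsto \partial(f)$ into an algebra homomorphism, so $\partial(fg) = \partial(f)\partial(g)$, and therefore
\begin{equation*}
\Phi(\overline{fg}) \;=\; \partial(fg)(\Delta_n(\aaa)) \;=\; \partial(f)\bigl(g \cdot \Delta_n(\aaa)\bigr) \;=\; \langle f,\, g \cdot \Delta_n(\aaa)\rangle,
\end{equation*}
where the last equality uses that $f$ and $h := g\cdot \Delta_n(\aaa)$ both live in bidegree $(i,j)$, so $\partial(f)(h)$ is the scalar $\langle f, h\rangle$. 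Hence, after identifying $R_n(\aaa)_{s,r}$ with $\QQ$ via $\Phi$, the multiplication pairing is simply $(\bar f, \bar g) \mapsto \langle f,\, \Phi(\bar g)\rangle$. For nondegeneracy on the first factor, suppose $\bar f \neq 0$ with bihomogeneous lift $f$; then $f \notin I_n(\aaa)_j = W_n(\aaa)^\perp_j$ (the Claim inside the proof of Theorem~\ref{duality-theorem}), so some $h \in W_n(\aaa)_j$ satisfies $\langle f, h\rangle \neq 0$, and since $\langle -,-\rangle$ respects the bigrading we may replace $h$ by its $(i,j)$-component in $W_n(\aaa)_{i,j}$. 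The surjectivity half of Theorem~\ref{duality-theorem}, applied to $\iota: W_n(\aaa)_{s-i,r-j} \to W_n(\aaa)_{i,j}$, then yields $g \in W_n(\aaa)_{s-i,r-j} \subset \QQ[\xx_n,{\bm \theta}_n]$ with $g \cdot \Delta_n(\aaa) = h$, and the corresponding class $\bar g \in R_n(\aaa)_{s-i,r-j}$ satisfies $\bar f\bar g \neq 0$ in $R_n(\aaa)_{s,r}$.

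Finally, to upgrade nondegeneracy on the first factor to perfectness, one needs $\dim R_n(\aaa)_{i,j} = \dim R_n(\aaa)_{s-i,r-j}$; this follows from $\dim R_n(\aaa)_{i,j} = \dim W_n(\aaa)_{s-i,r-j}$ (via $\Phi$) together with $\dim W_n(\aaa)_{i,j} = \dim W_n(\aaa)_{s-i,r-j}$, which is immediate from the Frobenius-level duality of Corollary~\ref{duality-corollary}. The main obstacle I anticipate is the bookkeeping: one must carefully track how $\Phi$ reverses bidegrees, how the sign conventions in the definitions of $\partial^{\theta}_i$ and $\langle -,-\rangle$ propagate through the identity $\partial(fg) = \partial(f)\partial(g)$, and how bidegree-$(i,j)$ components can be isolated inside a $\theta$-homogeneous subspace. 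All of these issues have, however, been settled once and for all by Lemmas~\ref{basic-partial-lemma} and \ref{bilinear-lemma}, so no fresh sign computations should be required.
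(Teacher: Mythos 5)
Your proposal is correct and follows essentially the same route as the paper: both identify the product $(fg)\cdot\Delta_n(\aaa)$ with the pairing $\langle f,\, g\cdot\Delta_n(\aaa)\rangle$, invoke the definiteness of $\langle -,-\rangle$ on each bihomogeneous piece (Lemma~\ref{bilinear-lemma}(1)), and use the bijectivity of $\iota$ from Theorem~\ref{duality-theorem} to match the two sides. The paper packages this as the nonsingularity of a Gram matrix, while you phrase it as one-sided nondegeneracy plus a dimension count from Corollary~\ref{duality-corollary}; the content is the same.
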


\begin{proof}
Let $\{f_1, \dots, f_m \}$ be a basis for $W_n(\aaa)_{i,j}$ and 
let $\{g_1, \dots, g_m \}$ be a basis for $W_n(\aaa)_{s-i,r-j}$
(by Theorem~\ref{duality-theorem}, these bases have the same size $m$).
The direct sum decomposition \eqref{main-direct-sum}
in the proof of
Theorem~\ref{duality-theorem} guarantees that 
$\{f_1, \dots, f_m\}$ descends to a basis of $R_n(\aaa)_{i,j}$ and
$\{g_1, \dots, g_m\}$ descends to a basis of $R_n(\aaa)_{s-i,r-j}$.

Let $A = (a_{p,q})_{1 \leq p, q \leq m}$ be the $m \times m$ rational matrix
whose entries are 
\begin{equation}
a_{p,q} := (f_p g_q) \cdot \Delta_n(\aaa).
\end{equation}
It is enough to verify that $A$ is nonsingular.
By Theorem~\ref{duality-theorem}, the set 
$\{ g_1 \cdot \Delta_n(\aaa), \dots, g_m \cdot \Delta_n(\aaa) \}$
descends to a basis of $R_n(\aaa)_{i,j}$.
The matrix element $a_{p,q}$ is equal to 
$\langle f_p, g_q \cdot \Delta_n(\aaa) \rangle$, so that $A$ is the Gram matrix
of a bilinear form on $R_n(\aaa)_{i,j}$ which (by Lemma~\ref{bilinear-lemma} (1))
is either positive definite or negative definite, and hence nonsingular.
\end{proof}

\begin{problem}
\label{poincare-problem}
Find
a geometric enhancement of $X_{n,k}$ which explains 
Corollary~\ref{poincare-corollary}.
\end{problem}

%We may consider the annihilator 
%\begin{equation}
%\ann_{\QQ[\xx_n, {\bm \theta_n}]} \Delta_n(\aaa)=
%\{ f \in \QQ[\xx_n, {\bm \theta_n}] \,:\, f \cdot \Delta_n(\aaa) = 0 \}
%\end{equation}
%of $\Delta_n(\aaa)$ inside $\QQ[\xx_n, {\bm \theta_n}]$.
%This is a (two-sided) ideal in $\QQ[\xx_n, {\bm \theta_n}]$ which is closed under the action
%of $\symm_n$.
%Proposition~\ref{annihilator-twist} has the following analogue whose proof is similar and omitted.

\subsection{Superspace coinvariants}
It is well-known that the ring $\QQ[\xx_n]^{\symm_n}$ of symmetric polynomials has algebraically 
independent generators given by the set $\{e_1, e_2, \dots, e_n \}$ of symmetric polynomials.
In general, a set of $n$ algebraically independent symmetric polynomials $\{f_1, f_2, \dots, f_n \}$ is a 
{\em fundamental system of invariants} if 
$\QQ[\xx_n]^{\symm_n} = \QQ[f_1, f_2, \dots, f_n]$.  A fundamental system of invariants other than
$\{e_1, e_2, \dots, e_n \}$ is the set of {\em power sums} $p_1, p_2, \dots, p_n$ where
$p_d = p_d(\xx_n) := x_1^d + \cdots + x_n^d$.

Recall that $\symm_n$ acts diagonally on superspace $\QQ[\xx_n, \bm{\theta}_n]$.  
The collection of invariant superpolynomials
$\QQ[\xx_n, {\bm \theta_n}]^{\symm_n}$ forms a subalgebra of $\QQ[\xx_n, {\bm \theta_n}]$.
Given a fundamental system of invariants $\{f_1, f_2, \dots, f_n \} \subseteq \QQ[\xx_n]$, Solomon
described a generating set of $\QQ[\xx_n, {\bm \theta_n}]^{\symm_n}$.  
The {\em differential} map $d: \QQ[\xx_n, \bm{\theta}_n] \rightarrow \QQ[\xx_n, \bm{\theta}_n]$
is defined by
\begin{equation}
df := \sum_{i = 1}^n \partial_i f  \times  \theta_i, \quad \text{for $f \in \QQ[\xx_n, \bm{\theta}_n]$.}
\end{equation}
Solomon proved \cite{Solomon} that 
$\QQ[\xx_n, \bm{\theta}_n]^{\symm_n}$ is generated as a $\QQ$-algebra by 
$\{ f_1, f_2, \dots, f_n, df_1, df_2, \dots, df_n \}$.
Extensions of various symmetric polynomial bases to superspace were studied in \cite{DLM}.

Let $\QQ[\xx_n, \bm{\theta}_n]^{\symm_n}_+$ denote the 
space of $\symm_n$-invariant superpolynomials
with vanishing constant term.  
The {\em superinvariant ideal} is the ideal 
\begin{equation}
SI_n := \langle \QQ[\xx_n, \bm{\theta}_n]^{\symm_n}_+ \rangle = 
\langle e_1, e_2, \dots, e_n, dp_1, dp_2, \dots, dp_n \rangle,
\end{equation}
where the second equality is justified by Solomon's result \cite{Solomon} and the 
equality $\QQ[e_1, \dots, e_n] = \QQ[p_1, \dots, p_n]$.
The {\em supercoinvariant algebra} is the quotient
\begin{equation}
SR_n :=  \QQ[\xx_n, \bm{\theta}_n]/ \langle \QQ[\xx_n, \bm{\theta}_n]^{\symm_n}_+ \rangle.
\end{equation}
The ring $SR_n$ is a bigraded $\symm_n$-module.

The following conjectural expression for the bigraded Frobenius image of $SR_n$ was 
obtained in the algebraic combinatorics seminar at  the Fields Institute; it is a special case of 
the conjecture of Mike
Zabrocki  in \cite{Zabrocki}:
\begin{equation}
\label{zabrocki-conjecture}
\grFrob(SR_n; q,z) = \sum_{k = 1}^n z^{n-k} \cdot \Delta'_{e_{k-1}} e_n \mid_{t = 0}
\end{equation}
where  $q$ tracks $x$-degree and $z$ tracks $\theta$-degree.
By Theorem~\ref{delta-vandermonde-theorem},
Equation~\eqref{zabrocki-conjecture} is equivalent to the statement that  
the $\theta$-homogeneous piece of $SR_n$ of $\theta$-degree $n-k$ is isomorphic to
$V_n(\aaa)$ as a (singly) graded $\symm_n$-module, where $\aaa = (k-1, \dots, k-1)$ is a length
$n-k$ sequence of $(k-1)$'s.
%Equation~\eqref{zabrocki-conjecture} predicts that the vector space dimension of $SR_n$ is
%$\sum_{k = 1}^n k! \cdot \Stir(n,k)$, the total number of ordered set partitions of $[n]$.
%N. Bergeron, Machacek and Zabrocki (personal communication) proved
%$\dim SR_n \geq \sum_{k = 1}^n k! \cdot \Stir(n,k)$.  
By Theorem~\ref{delta-vandermonde-theorem} we have the following potential road to proving
Equation~\eqref{zabrocki-conjecture}.

\begin{proposition}
\label{zabrocki-reformulation}
Suppose that, for each $1 \leq k \leq n$, the following condition holds:
\begin{quote}
the canonical projection from $V_n(\aaa) \subseteq \QQ[\xx_n, \bm{\theta}_n]$,
where $\aaa = (k-1, \dots, k-1)$ is a length $n-k$ sequence of $(k-1)$'s, to 
the $\theta$-homogeneous piece of $SR_n$ of $\theta$-degree $n-k$ is an isomorphism of vector spaces.
\end{quote}
Then Equation~\eqref{zabrocki-conjecture} is true.
\end{proposition}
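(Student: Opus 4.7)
The plan is to combine the hypothesized vector-space isomorphisms with Theorem~\ref{delta-vandermonde-theorem} and sum over $\theta$-degrees. The proof is essentially a bookkeeping argument given the hypothesis, with three ingredients to check: the hypothesized isomorphisms automatically respect the $\symm_n$-action and $x$-grading, the $\theta$-degree $n$ component of $SR_n$ vanishes, and the graded Frobenius images reassemble correctly.

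For the equivariance, I would note that the canonical projection factors as the composition $V_n(\aaa) \hookrightarrow \QQ[\xx_n, \bm{\theta}_n] \twoheadrightarrow SR_n$. The inclusion is $\symm_n$-equivariant and $x$-graded because $V_n(\aaa)$ is a stable, $x$-homogeneous subspace of superspace. The quotient is $\symm_n$-equivariant and $x$-graded because $SI_n$ is generated by $\symm_n$-invariant, $x$-homogeneous elements. Since every element of $V_n(\aaa)$ has $\theta$-degree exactly $n-k$, the hypothesized vector-space isomorphism therefore lifts automatically to an isomorphism of singly $x$-graded $\symm_n$-modules $V_n(\aaa) \xrightarrow{\sim} (SR_n)_{n-k}$, where $(SR_n)_j$ denotes the $\theta$-homogeneous piece of $SR_n$ of $\theta$-degree $j$. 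For the vanishing in top $\theta$-degree, I would observe that $dp_1 = \theta_1 + \theta_2 + \cdots + \theta_n \in SI_n$, and multiplying by $\theta_2 \theta_3 \cdots \theta_n$ in superspace kills all but the first summand by anticommutativity, yielding $\theta_1 \theta_2 \cdots \theta_n \in SI_n$. Every element of $\theta$-degree $n$ is a $\QQ[\xx_n]$-multiple of $\theta_1 \cdots \theta_n$, so $(SR_n)_n = 0$.

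Assembling these facts, I would decompose
\begin{equation*}
\grFrob(SR_n; q, z) = \sum_{j=0}^{n-1} z^j \grFrob((SR_n)_j; q) = \sum_{k=1}^{n} z^{n-k} \grFrob(V_n(\aaa); q),
\end{equation*}
where on the right $\aaa = (k-1, \ldots, k-1)$ has length $n-k$; the first equality uses the vanishing in $\theta$-degree $n$ and the second substitutes $j = n-k$ and applies the graded isomorphism above. Theorem~\ref{delta-vandermonde-theorem} identifies each $\grFrob(V_n(\aaa); q)$ with $\Delta'_{e_{k-1}} e_n \mid_{t=0}$, yielding \eqref{zabrocki-conjecture}. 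There is no real obstacle to this argument once the hypothesis is granted; the genuine difficulty, and the content of Zabrocki's conjecture, lies in proving the hypothesis itself, namely that $V_n(\aaa)$ really does surject isomorphically onto the relevant $\theta$-homogeneous piece of $SR_n$.
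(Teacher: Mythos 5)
Your argument is correct and is exactly the one the paper intends: the paper states this proposition without writing out a proof, treating it as an immediate consequence of Theorem~\ref{delta-vandermonde-theorem} once the hypothesized isomorphisms are granted and summed over $\theta$-degree. Your explicit check that the $\theta$-degree $n$ component of $SR_n$ vanishes (via $\theta_1\cdots\theta_n = dp_1\cdot\theta_2\cdots\theta_n \in SI_n$) is a step the paper leaves implicit, and it is genuinely needed, since the hypothesis only covers $\theta$-degrees $0,\dots,n-1$ while \eqref{zabrocki-conjecture} asserts the full bigraded Frobenius image.
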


We have been unable to use Vandermondes to analyze the supercoinvariant algebra 
$SR_n$ directly, but we have the following result 
describing elements of the annihilator of $\Delta_n(\aaa)$ in terms of
the superinvariant ideal $SI_n$ in the case where $\aaa = (k-1, \dots, k-1)$ is a constant
sequence of $(k-1)$'s of length $n-k$.

\begin{proposition}
\label{rho-ideal-kills}
Let $0 \leq r \leq n$ and let $k = n - r$.  Let $\aaa \in (\ZZ_{\geq 0})^r$ be the constant sequence
$\aaa = (k-1, k-1, \dots, k-1)$ of length $r$.  Each of the superpolynomials
\begin{equation}
x_1^k, x_2^k, \dots, x_n^k, \quad
e_n, e_{n-1}, \dots, e_{n-k+1}, \quad
dp_1, dp_2, \dots, dp_n
\end{equation}
annihilates $\Delta_n(\aaa)$.
\end{proposition}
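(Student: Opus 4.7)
The plan is to dispatch the three families of generators in turn. For the powers $x_1^k, \ldots, x_n^k$: every supermonomial appearing in $\Delta_n(\aaa)$ has each $x$-variable raised to an exponent at most $k-1$ (the entries of $\aaa$ are $k-1$ and the staircase part uses exponents $k-1, k-2, \ldots, 0$), so $\partial_i^k$ annihilates $\Delta_n(\aaa)$ directly. For the elementary symmetric polynomials $e_n, e_{n-1}, \ldots, e_{n-k+1}$: this is precisely Lemma~\ref{sign-reversing-lemma}.

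The heart of the proof is the family $dp_1, \ldots, dp_n$. Since $dp_d = d\sum_{i=1}^n x_i^{d-1}\theta_i$, its action on $\Delta_n(\aaa)$ equals $d \cdot \Phi_d(\Delta_n(\aaa))$, where $\Phi_d := \sum_{i=1}^n \partial_i^{d-1}\partial_i^\theta$. The first key step is to observe, using the relations $w\partial_i w^{-1} = \partial_{w(i)}$ and $w\partial_i^\theta w^{-1} = \partial_{w(i)}^\theta$ from Lemma~\ref{basic-partial-lemma}, that $\Phi_d$ is $\symm_n$-invariant and therefore commutes with $\varepsilon_n$. Writing $m := x_1^{k-1}\cdots x_r^{k-1} x_{r+1}^{k-1} x_{r+2}^{k-2}\cdots x_n^0 \cdot \theta_1\cdots\theta_r$, we pull the antisymmetrizer outside:
\[
dp_d \cdot \Delta_n(\aaa) = d \cdot \varepsilon_n \cdot \Phi_d(m).
\]

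It remains to show $\varepsilon_n \cdot \Phi_d(m) = 0$. Since $\partial_i^\theta$ kills $\theta_1\cdots\theta_r$ whenever $i > r$, only indices $i \in [r]$ contribute. If $d > k$ then $\partial_i^{d-1}$ annihilates $x_i^{k-1}$ for each such $i$ and $\Phi_d(m)=0$ outright. For $1 \leq d \leq k$, the $i$-th surviving summand is, up to a nonzero scalar, the supermonomial whose $\theta$-support is $S := [r] \setminus \{i\}$ and whose $x$-exponent vector agrees with that of $m$ except that position $i$ carries exponent $k-d$ in place of $k-1$. Because $r+d > r$, both positions $i$ and $r+d$ lie outside $S$, and they carry the identical $x$-exponent $k-d$ (which collapses to the pair of positions $i, r+1$ both of exponent $k-1$ when $d=1$). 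The mechanism from the proof of Lemma~\ref{sign-reversing-lemma} --- two $\theta$-free columns of equal $x$-exponent force $\varepsilon_n$ to kill the supermonomial --- then annihilates each summand.

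The main obstacle I anticipate is the step of commuting $\Phi_d$ past $\varepsilon_n$, since $\Phi_d$ involves the new antisymmetric partials $\partial_i^\theta$ whose $\symm_n$-conjugation behavior is supplied only by Lemma~\ref{basic-partial-lemma}. Once this commutation is in hand, the argument reduces to the exponent-repetition bookkeeping already used for the top elementary symmetric polynomials, and no novel cancellation mechanism is required.
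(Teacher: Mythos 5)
Your proposal is correct and follows essentially the same route as the paper's proof: the variable powers are killed by the exponent bound, the top elementary symmetric polynomials by Lemma~\ref{sign-reversing-lemma}, and for $dp_d$ you commute the ($\symm_n$-invariant) operator past $\varepsilon_n$, apply it to the base supermonomial, and observe that each surviving term has two $\theta$-free positions ($i$ and $r+d$, which is the paper's $n-k+j$) with equal $x$-exponent $k-d$, so $\varepsilon_n$ annihilates it. The only differences are cosmetic (naming the operator $\Phi_d$ and the index convention).
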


The ideal generated by the superpolynomials
appearing in Proposition~\ref{rho-ideal-kills} has generators
similar to the superinvariant ideal
$SI_n = \langle e_1, \dots, e_n, dp_1, \dots, dp_n \rangle$, but without the low degree elementary symmetric
polynomials $e_1, e_2, \dots, e_{n-k}$ and with the variable powers $x_1^k, x_2^k, \dots, x_n^k$.
Indeed, these ideals are not equal. Despite this,
we hope that the similarity between these ideals will assist in the proof of Zabrocki's 
conjecture \eqref{zabrocki-conjecture}.

\begin{proof}
We have $x_1^k, \dots, x_n^k \in \ann_{\QQ[\xx_n]} \Delta_n(\aaa)$ because no
$x$-variable in $\Delta_n(\aaa)$ has exponent $\geq k$.  
By Lemma~\ref{sign-reversing-lemma} we also have 
$e_n, e_{n-1}, \dots, e_{n-k+1} \in \ann_{\QQ[\xx_n]} \Delta_n(\aaa)$.

Let $1 \leq j \leq n$.
By \eqref{permutation-derivative-relations}
$dp_j$ commutes with the action of $\symm_n$, and hence the action of $\varepsilon_n$.
It follows that 
\begin{equation}
\label{step-one}
dp_j \cdot \Delta_n(\aaa)  = \varepsilon_n \cdot dp_j \cdot 
\left[ x_1^{k-1}   \cdots x_{r}^{k-1}
x_{r+1}^{k-1} \cdots x_{n-1}^1  x_n^0 \cdot \theta_1 \theta_2 \cdots \theta_r \right].
\end{equation}
A direct computation gives $dp_j \cdot \left[ x_1^{k-1}   \cdots x_{r}^{k-1}
x_{r+1}^{k-1} \cdots x_{n-1}^1  x_n^0 \cdot \theta_1 \theta_2 \cdots \theta_r \right] = 0$
if $j > k$ and 
\begin{equation}
\label{step-two}
dp_j \cdot 
\left[ x_1^{k-1}   \cdots x_{r}^{k-1}
x_{r+1}^{k-1} \cdots  x_n^0 \cdot \theta_1  \cdots \theta_r \right] \doteq
\sum_{i = 1}^r (-1)^{i-1} x_1^{k-1} \cdots x_i^{k-j} \cdots x_r^{k-1} x_{r+1}^{k-1} \cdots x_n^0 \theta_1 
\cdots \widehat{\theta_i} \cdots \theta_r 
\end{equation}
if $j \leq k$.
In term $i$ in the sum on the right-hand-side of Equation~\eqref{step-two}, the exponents
of $x_i$ and $x_{n-k+j}$ coincide.  Since neither $\theta_i$ nor $\theta_{n-k+j}$ appear in this term,
this term is annihilated by the application of $\varepsilon_n$. 
We conclude that Equation~\eqref{step-two} itself is annihilated by $\varepsilon_n$, so that 
Equation~\eqref{step-one} equals 0.
\end{proof}

\section{Conclusion}
\label{Conclusion}
\subsection{A conjecture on Tanisaki quotients}
In this paper we defined a graded $\symm_n$-module $V_n(\aaa)$ for any nonnegative integer
sequence $\aaa$ of length $\leq n$.
Theorem~\ref{delta-vandermonde-theorem},
Theorem~\ref{higher-constant-sequences}, and
Proposition~\ref{zero-tanisaki}
calculate the graded module structure of $V_n(\aaa)$ for certain constant sequences $\aaa$.
It is natural to ask what $V_n(\aaa)$ looks like for general sequences $\aaa$.
While we do not have a full conjecture in this direction, computational evidence suggests a relationship
between $V_n(\aaa)$ and the Tanisaki quotients $R_{\lambda}$.

More precisely, let $\leq$ be the componentwise partial order on length $r$ sequences of 
nonnegative integers.  Given a length $r$ sequence $\aaa \in (\ZZ_{\geq 0})^r$ and $n \geq r$, define the 
graded $\symm_n$-modules
\begin{equation}
V^{\leq}_n(\aaa) := \sum_{\mathbf{b} \leq \aaa} V_n(\mathbf{b}), \quad
V^{<}_n(\aaa) := \sum_{\mathbf{b} < \aaa} V_n(\mathbf{b}), \quad \text{and } 
V^=_n(\aaa) := V^{\leq}_n(\aaa)/V^<_n(\aaa).
\end{equation}
It can be checked that 
\begin{equation*}
\begin{cases}
(\rev_q \circ \omega) \grFrob(V^=_4(0,0); q) = \grFrob(R_{(3,1)}; q), \\
(\rev_q \circ \omega) \grFrob(V^=_4(1,0); q) = \grFrob(R_{(3,1)}; q), \text{ and } \\\
(\rev_q \circ \omega) \grFrob(V^=_4(1,1); q) = \grFrob(R_{(2,2)}; q). 
\end{cases}
\end{equation*}

\begin{conjecture}
\label{tanisaki-conjecture}
Let $r \leq n$ be nonnegative integers with $k = n-r$ and let $\aaa \in (\ZZ_{\geq 0})^r$.
There exists a partition $\lambda \vdash n$ with $k$ parts such that 
\begin{equation}
(\rev_q \circ \omega) \grFrob(V^=_n(\aaa); q) = \grFrob(R_{\lambda}; q).
\end{equation}
Equivalently, if $Q'_{\lambda}(X;q)$ is the Hall-Littlewood $Q'$-function,
we have
\begin{equation}
\label{second-formulation}
\omega [\grFrob(V^=_n(\aaa); q)] \propto 
Q'_{\lambda}(X;q),
\end{equation}
where $\propto$ denotes equality up to a power of $q$.
\end{conjecture}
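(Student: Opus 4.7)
The plan is to proceed by first pinning down the map $\aaa \mapsto \lambda(\aaa)$ combinatorially, and then mirroring the architecture of Sections~\ref{Vandermondes and the Delta Conjecture}--\ref{Vandermondes and Other Graded Modules}.  The worked examples for $n = 4, k = 2$ show that $\lambda(\aaa)$ depends on $\aaa$ only through its sorted multiset and that the assignment is many-to-one, so $\lambda(\aaa)$ should be given by a rule in the multiplicities $m_0(\aaa), m_1(\aaa), \dots$ together with the staircase $(k-1, k-2, \dots, 0)$.  I would first calibrate a candidate formula --- for instance, extracting $\lambda(\aaa)$ from the exponent multiset $\{a_1, \dots, a_r, k-1, \dots, 0\}$ of the leading term of $\Delta_n(\aaa)$ --- by tabulating small cases and enforcing two hard constraints: that $\lambda(\aaa)$ has exactly $k$ parts, and that $n(\lambda(\aaa)) = |\aaa| + \binom{k}{2}$ (the top $x$-degree of $V_n(\aaa)$).

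Once $\lambda(\aaa)$ is identified, I plan to establish the isomorphism in two steps, following the twist of Proposition~\ref{annihilator-twist}.  For the \emph{upper} bound I would generalize Lemma~\ref{sign-reversing-lemma}: for a subset $S \subseteq [n]$ of size $i$ and $d > i - (\lambda'_{n-i+1} + \cdots + \lambda'_n)$, the Tanisaki generator $e_d(S)$ should annihilate $\Delta_n(\aaa)$ \emph{modulo} $V^{<}_n(\aaa)$.  This would follow from a refinement of the dotted-$\aaa$-staircase combinatorics, in which cancellation by $\varepsilon_n$ is replaced by cancellation up to terms in $V^{<}_n(\aaa)$ and the surviving error is identified explicitly as a linear combination of $\Delta_n(\mathbf{b})$'s with $\mathbf{b} < \aaa$.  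This would yield a surjection $R_{\lambda(\aaa)} \twoheadrightarrow V^{=}_n(\aaa)$ after sign twist and $q$-reversal.  For the \emph{lower} bound, I would exhibit a Garsia--Procesi-style substaircase basis for $R_{\lambda(\aaa)}$ and apply the corresponding differential-operator monomials to $\Delta_n(\aaa)$, proving linear independence in $V^{=}_n(\aaa)$ by the leading-term / lexicographically-least-nonzero-coefficient argument of Lemma~\ref{lower-bound-lemma}, now interpreted modulo $V^{<}_n(\aaa)$.

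The main obstacle is the upper bound.  In Lemma~\ref{sign-reversing-lemma} the cancellation of $e_d(\xx_n) \cdot \Delta_n(\aaa)$ was driven by the presence of two $\theta$-free columns of equal height, which fails once $S$ is a proper subset of $[n]$ or once $d$ lies at the Tanisaki threshold rather than above it.  In the relative setting one must track the surviving terms and argue they lie in $V^{<}_n(\aaa)$, which in turn requires an explicit combinatorial model for $V^{<}_n(\aaa)$ --- presumably an $\aaa$-analog of the substaircase basis that exhibits $V^{<}_n(\aaa)$ as the span of differential images of strictly smaller Vandermondes.  A secondary difficulty is formulating $\lambda(\aaa)$ when $\aaa$ has entries exceeding $k-1$; the worked examples only probe $\aaa \in \{0, 1, \dots, k-1\}^r$, and Theorem~\ref{higher-constant-sequences} hints that enlarging entries of $\aaa$ may change $\lambda(\aaa)$ in a subtle way through its interaction with the staircase.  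If these two obstacles can be surmounted, the strategy delivers a uniform superspace cone-of-derivatives presentation of every Tanisaki quotient, generalizing the hook case of Proposition~\ref{zero-tanisaki}.
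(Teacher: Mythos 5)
This statement is Conjecture~\ref{tanisaki-conjecture}: the paper offers no proof of it, only the special case $\aaa = (0,\dots,0)$ (Proposition~\ref{zero-tanisaki}, where $V^{<}_n(\aaa) = 0$ so no quotient is involved), computer checks for $n=4$, and the consistency check against the $Q'$-expansion \eqref{hl-identity}. So there is nothing to compare your argument against; what you have written is a research plan for an open problem, and it should be judged as such.

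As a plan it has the right architecture (mimic Proposition~\ref{annihilator-twist}, an upper bound via Tanisaki generators annihilating $\Delta_n(\aaa)$ modulo $V^{<}_n(\aaa)$, a lower bound via a Garsia--Procesi substaircase family), but it does not close the gap, and one of your calibration constraints is wrong. The central missing ingredient is the rule $\aaa \mapsto \lambda(\aaa)$, which is precisely what the authors state they were unable to find (``We do not have a conjecture for how to produce $\lambda$ from $\aaa$ in general''), so ``tabulate small cases and read off the rule'' is not a step you can take for granted. Worse, your second hard constraint $n(\lambda(\aaa)) = |\aaa| + \binom{k}{2}$ is false: for $n=4$, $\aaa=(1,1)$, $k=2$ the right side is $3$, while the paper's data gives $\lambda = (2,2)$ with $n(\lambda) = 2$. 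The point is that passing to the quotient $V^{\leq}_n(\aaa)/V^{<}_n(\aaa)$ can kill the \emph{low}-degree part of $V_n(\aaa)$, so the graded support of $V^{=}_n(\aaa)$ sits in an interval $[e, |\aaa|+\binom{k}{2}]$ with $e>0$ in general; this is exactly why the conjecture is stated only up to a power of $q$ (the $\propto$ in \eqref{second-formulation}). The correct degree constraint is on the \emph{width} of that interval, not its top, and it requires knowing the bottom degree $e$ of the quotient --- itself a nontrivial piece of the same unknown combinatorics. Finally, your upper-bound step presupposes an explicit spanning model for $V^{<}_n(\aaa)$ against which to measure the error terms of the dotted-staircase cancellation; no such model is available in the paper, and you correctly flag this as the main obstacle without proposing how to overcome it. In short: plausible scaffolding, but the conjecture remains open under your proposal.
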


Proposition~\ref{zero-tanisaki} proves Conjecture~\ref{tanisaki-conjecture} when $\aaa$ is a 
zero sequence and $\lambda = (r+1, 1, \dots, 1) \vdash n$.
By Proposition~\ref{annihilator-twist} and  \cite[Thm. 6.14]{HRS} 
if $\aaa = (k-1, \dots, k-1)$ is a length $r$ sequence of $(k-1)$'s we have
\begin{equation}
\label{hl-identity}
\grFrob(V_n(\aaa); q) = 
\sum_{\substack{\lambda \vdash n \\ \ell(\lambda) = k}}
q^{\sum (i-1)(\lambda_i - 1)}
{k \brack m_1(\lambda), \dots, m_n(\lambda)}_q \omega Q'_{\lambda}(X;q).
\end{equation}
Conjecture~\ref{tanisaki-conjecture} can be thought of as giving a filtration on 
$V_n(\aaa)$ which is compatible with Equation~\eqref{hl-identity}.
We do not have a conjecture for how to produce $\lambda$ from $\aaa$ in general.

The generalized coinvariant ring $R_{n,k}$ of \cite{HRS} and the positroid quotient $S_n$ of \cite{BRT}
have graded Frobenius images which are (up to $q$-reversal) positive in the 
$Q'$-basis of symmetric functions.  In \cite{RW2} the authors defined a quotient of the polynomial ring
$\QQ[\xx_n]$ corresponding to hook Schur-delta operator images
$\Delta_{s_{(r, 1^{n-1})}} e_n \mid_{t = 0}$ whose graded Frobenius image is also (up to $q$-reversal) $Q'$-positive.
Haglund, Rhoades, and Shimozono  \cite{HRS2} gave a manifestly positive $Q'$-expansion of 
$\Delta'_{s_{\lambda}} e_n \mid_{t = 0}$, where $s_{\lambda}$ is any Schur function (up to $\omega$).
It may be interesting to use superspace to build modules for the symmetric functions appearing 
in \cite{RW2} and \cite{HRS2}.

\subsection{Additional sets of variables, $\Delta'_{e_{k-1}} e_n$, and beyond}
Zabrocki \cite{Zabrocki} conjectured an extension of \eqref{zabrocki-conjecture} to more sets of variables.
Let  $\QQ[\xx_n, \yy_n, \bm{\theta}_n]$ be the $\QQ$-algebra
with $2n$ commuting variables $x_1, \dots, x_n, y_1, \dots, y_n$ and
$n$ anticommuting variables $\theta_1, \dots, \theta_n$
(where any two variables of different species commute). Zabrocki \cite{Zabrocki}
verified that 
\begin{equation}
\label{conjectural-delta-quotient}
\grFrob(\QQ[\xx_n, \yy_n, \bm{\theta}_n]/ \langle \QQ[\xx_n, \yy_n, \bm{\theta}_n]^{\symm_n}_+ \rangle; q,t,z) 
= \sum_{k = 1}^n z^{n-k} \cdot \Delta'_{e_{k-1}} e_n
\end{equation}
for $n \leq 6$.
Here $q$ tracks $x$-degree, $t$ tracks $y$-degree, and $z$ tracks $\theta$-degree.
Thus, the quotient 
$\QQ[\xx_n, \yy_n, \bm{\theta}_n]/ \langle \QQ[\xx_n, \yy_n, \bm{\theta}_n]^{\symm_n}_+ \rangle$ gives 
a conjectural representation theoretic model for $\Delta'_{e_{k-1}} e_n$
in antisymmetric degree $n-k$.

Superspace Vandermondes can be used to give another conjectural representation theoretic model for
$\Delta'_{e_{k-1}} e_n$.  To describe this, we need the {\em polarization operators}
on $\QQ[\xx_n, \yy_n, \bm{\theta}_n]$.
For $j \geq 1$, the $j^{th}$ {\em polarization operator (from the $x$-variables to the $y$-variables)}
on $\QQ[\xx_n, \yy_n, \bm{\theta}_n]$ is the operator
\begin{equation}
p_{x \rightarrow y}^{(j)} := y_1 \partial_{x_1}^j + y_2 \partial_{x_2}^j + \cdots + y_n \partial_{x_n}^j.
\end{equation}

\begin{defn}
Let $n = k + r$ and let $\aaa = (a_1, \dots, a_r) \in (\ZZ_{\geq 0})^r$.
Let $\mathcal{V}_n(\aaa)$ be the smallest subspace of  $\QQ[\xx_n, \yy_n, \bm{\theta}_n]$ such that
\begin{itemize}
\item $\mathcal{V}_n(\aaa)$ contains the $\aaa$-superspace Vandermonde 
\begin{equation*}
\Delta_n(\aaa) = \varepsilon_n \cdot (x_1^{a_1} \cdots x_r^{a_r} x_{r+1}^{k-1} \cdots x_{n-1}^1 x_n^0 
\theta_1 \cdots \theta_r)
\end{equation*}
in the $x$-variables and $\theta$-variables,
\item  $\mathcal{V}_n(\aaa)$ is closed under all partial derivatives $\partial_{x_i}$ and $\partial_{y_i}$ in 
commuting variables, and
\item $\mathcal{V}_n(\aaa)$ is closed under all polarization operators $p_{x \rightarrow y}^{(j)}$ for $j \geq 1$.
\end{itemize}
\end{defn}

The space $\mathcal{V}_n(\aaa)$ has fixed $\theta$-degree $r$. By considering the $x$-degree and $y$-degree
separately, we view $\mathcal{V}_n(\aaa)$ as a doubly graded $\symm_n$-module.
The space $\mathcal{V}_n(\aaa)$ specializes to $V_n(\aaa)$ when the $y$-variables are set to zero.

\begin{conjecture}
\label{double-frobenius}
Let $n = k+r$ and
let $\aaa = (k-1, k-1, \dots, k-1)$ be a length $r$ sequence of $(k-1)$'s.  
Then
\begin{equation}
\grFrob(\mathcal{V}_n(\aaa); q, t) = \Delta'_{e_{k-1}} e_n.
\end{equation}
\end{conjecture}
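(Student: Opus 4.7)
The plan is to combine the singly graded identification of Theorem~\ref{delta-vandermonde-theorem} with a polarization argument modeled on the theory of diagonal harmonics. First I would observe that setting $y_1 = \cdots = y_n = 0$ in $\mathcal{V}_n(\aaa)$ recovers $V_n(\aaa)$, so Theorem~\ref{delta-vandermonde-theorem} immediately identifies the coefficient of $t^0$ in $\grFrob(\mathcal{V}_n(\aaa); q, t)$ as $\Delta'_{e_{k-1}} e_n \mid_{t = 0}$. This matches the $t = 0$ specialization of the target identity and serves as the base of an induction on $y$-degree; the inductive step must track how the polarization operators $p^{(j)}_{x \to y}$ produce new $\symm_n$-submodules without overcounting.

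To extend the identification to arbitrary $t$-degree, I would proceed by bounding $\grFrob(\mathcal{V}_n(\aaa); q, t)$ from above and below. For the upper bound, I would identify explicit relations satisfied by $\Delta_n(\aaa)$: by Lemma~\ref{sign-reversing-lemma} and Proposition~\ref{rho-ideal-kills}, the operators $x_i^k$, the top elementary symmetric polynomials $e_{n-k+1}, \ldots, e_n$, and the Solomon-type differentials $dp_j$ all annihilate $\Delta_n(\aaa)$, and $\partial_{y_i}\Delta_n(\aaa) = 0$ trivially. Pushing these relations through repeated applications of $p^{(j)}_{x \to y}$ and using the commutator identities of the polarization operators with the $\partial_{x_i}$ yields a family of polarized relations whose common kernel controls $\mathcal{V}_n(\aaa)$ from above. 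For the lower bound, I would exploit the Macdonald eigenoperator description of $\Delta'_{e_{k-1}}$: decompose $\mathcal{V}_n(\aaa)$ into $\symm_n$-isotypic components indexed by $\lambda \vdash n$, and try to produce, in each isotypic component, a generating set whose bigraded character matches $e_{k-1}[B_{\lambda}(q,t) - 1] \cdot \widetilde{H}_{\lambda}$. Combinatorially, the hope would be to construct an explicit bigraded basis of $\mathcal{V}_n(\aaa)$ indexed by labeled Dyck paths carrying rise or valley decorations, extending the substaircase monomial basis of Lemma~\ref{lower-bound-lemma} through a parking-function-type framework.

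The main obstacle is that Conjecture~\ref{double-frobenius} asserts a bigraded representation-theoretic model for $\Delta'_{e_{k-1}} e_n$, and so is at least as hard as the bigraded Delta Conjecture of Haglund, Remmel, and Wilson, which remains open. In particular, computing $\dim_{\QQ} \mathcal{V}_n(\aaa)$ as a polynomial in $q$ and $t$ is not currently within reach of known techniques: the combinatorial formulas $\Rise_{n,k}$ and $\Val_{n,k}$ are only proven equal to $\Delta'_{e_{k-1}} e_n$ after specializing $q$ or $t$ to zero. A decisive step would likely require either a new bijective interpretation of polarized Vandermonde derivatives via parking functions, or a geometric enhancement of the spanning line configuration space $X_{n,k}$ whose equivariant cohomology realizes $\mathcal{V}_n(\aaa)$ and makes the Macdonald structure manifest.
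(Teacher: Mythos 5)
The statement you were asked to prove is Conjecture~\ref{double-frobenius}, and the paper does not prove it: the authors only report that it has been verified by computer for $n \leq 4$ and that Theorem~\ref{delta-vandermonde-theorem} establishes the $t = 0$ specialization. Your proposal correctly identifies exactly this state of affairs. The observation that setting the $y$-variables to zero in $\mathcal{V}_n(\aaa)$ recovers $V_n(\aaa)$, so that the $t^0$-coefficient of $\grFrob(\mathcal{V}_n(\aaa);q,t)$ is $\Delta'_{e_{k-1}} e_n \mid_{t=0}$, is precisely the paper's own justification for the $t=0$ case, and your assessment that the full bigraded statement is at least as hard as the Delta Conjecture itself (whose combinatorial sides $\Rise_{n,k}$ and $\Val_{n,k}$ are only known to equal $\Delta'_{e_{k-1}} e_n$ after a specialization) matches the authors' framing.

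The remainder of your proposal --- polarizing the relations of Lemma~\ref{sign-reversing-lemma} and Proposition~\ref{rho-ideal-kills} to get an upper bound, and seeking a decorated-Dyck-path basis extending the substaircase monomials for a lower bound --- is a plausible research program but not a proof, and you are explicit about the obstruction: no one currently knows how to compute even $\dim_{\QQ} \mathcal{V}_n(\aaa)$ bigradedly. Since the paper offers no argument beyond the base case and finite computation, there is no proof to measure your attempt against; your write-up is an honest and accurate account of what is known versus what is open, rather than a gap in a purported proof. The one point worth tightening if you pursue this: you should verify that the $y$-degree-zero piece of $\mathcal{V}_n(\aaa)$ is \emph{exactly} $V_n(\aaa)$ and not larger, i.e.\ that compositions such as $\partial_{y_i} \circ p^{(j)}_{x \to y}$ applied to elements of $V_n(\aaa)$ do not escape $V_n(\aaa)$; a short computation with the commutators shows $\partial_{y_i}\, p^{(j)}_{x \to y} f = \partial_{x_i}^j f$ on $y$-free $f$, so this does hold, but it is the kind of step that should be recorded rather than assumed.
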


Conjecture~\ref{double-frobenius} has been checked by computer for $n \leq 4$.
Theorem~\ref{delta-vandermonde-theorem} proves Conjecture~\ref{double-frobenius} in the case $t = 0$;
Zabrocki's conjecture \eqref{conjectural-delta-quotient} is open even in the case $t = 0$.
Just as we hope that $V_n(\aaa)$ will lead to a better understanding of the supercoinvariant 
algebra 
$SR_n = \QQ[\xx_n, \bm{\theta}_n]/ \langle \QQ[\xx_n, \bm{\theta}_n]^{\symm_n}_+ \rangle$,
we hope that $\mathcal{V}_n(\aaa)$ will help in understanding  the quotient
$\QQ[\xx_n, \yy_n, \bm{\theta}_n]/ \langle \QQ[\xx_n, \yy_n, \bm{\theta}_n]^{\symm_n}_+ \rangle$
appearing in \eqref{conjectural-delta-quotient}.

Given any  vector $\aaa = (a_1, \dots, a_r) \in (\ZZ_{\geq 0})^r$,
we have a symmetric function $\grFrob(\mathcal{V}_n(\aaa);q,t)$.
It might be interesting to study the combinatorics of these symmetric functions when $\aaa$ 
is a vector other than $(k-1, \dots, k-1)$.

There has been a significant amount of interest in extensions of the diagonal 
coinvariant ring to $> 2$ species of $n$ commuting variables (see \cite{BergeronMulti}).
Let us remark that we may extend our modules $\mathcal{V}_n(\aaa)$ to any number of 
species of commuting and skew-commuting variables.

Let $S(n,c,s)$ be the $\QQ$-algebra generated by $c$ species of $n$ {\bf c}ommuting variables
\begin{equation*}
x_1^{(1)}, x_2^{(1)}, \dots, x_n^{(1)}, \quad
x_1^{(2)}, x_2^{(2)}, \dots, x_n^{(2)}, \quad \dots \quad, \text{ and }
x_1^{(c)}, x_2^{(c)}, \dots, x_n^{(c)},
\end{equation*}
and $s$ species of $n$ {\bf s}kew-commuting variables 
\begin{equation*}
\theta_1^{(1)}, \theta_2^{(1)}, \dots, \theta_n^{(1)}, \quad
\theta_1^{(2)}, \theta_2^{(2)}, \dots, \theta_n^{(2)}, \quad \dots \quad, \text{ and }
\theta_1^{(s)}, \theta_2^{(s)}, \dots, \theta_n^{(s)},
\end{equation*}
where any two variables drawn from different species commute.
The ring $S(n,c,s)$ is a multigraded $\QQ$-algebra with $c$ kinds of commutative grading and
$s$ kinds of skew-commutative grading.

The ring $S(n,c,s)$ carries a `diagonal' action of $\symm_n$ by simultaneous subscript permutation.
We may also act on $S(n,c,s)$ by any partial derivative $\partial/\partial x_i^{(j)}$ or 
$\partial/\partial \theta_i^{(j)}$ 
with respect to any commuting or skew-commuting variable.
We may also polarize between any two species of commutating variables $1 \leq i, i' \leq c$
(at polarization parameter $j$)
by the operator
\begin{equation}
p^{(j)}_{i \rightarrow i'} := x_1^{(i')} (\partial/\partial x_1^{(i)})^j +
x_2^{(i')} (\partial/\partial x_2^{(i)})^j  + \cdots + x_n^{(i')} (\partial/\partial x_n^{(i)})^j 
\end{equation}
and between any two species of skew-commuting variables $1 \leq i, i' \leq s$ by the operator
\begin{equation}
q^{(j)}_{i \rightarrow i'} := \theta_1^{(i')} (\partial/\partial \theta_1^{(i)}) +
\theta_2^{(i')} (\partial/\partial \theta_2^{(i)})  + \cdots + \theta_n^{(i')} (\partial/\partial \theta_n^{(i)}).
\end{equation}

Given $r \leq n$ and sequence $\aaa = (a_1, \dots, a_r) \in (\ZZ_{\geq 0})^r$, we may define 
$\mathbb{V}_n(\aaa,c,s)$ to be the smallest $\QQ$-linear subspace of $S(n,c,s)$ 
containing the $\aaa$-superspace Vandermonde $\Delta_n(\aaa)$ in the 
$x^{(1)}$ and $\theta^{(1)}$-variables which is closed under all possible partial differentiation and 
polarization operators.
We have $\mathbb{V}_n(\aaa,1,1) = V_n(\aaa)$ and $\mathbb{V}_n(\aaa,2,1) = \mathcal{V}_n(\aaa)$.

The vector space $\mathbb{V}_n(\aaa,c,s)$ is a multigraded $\symm_n$-module. 
Its isomorphism type is encoded in a symmetric function
\begin{equation}
\grFrob(\mathbb{V}_n(\aaa,c,s); q_1, q_2, \dots, q_c, z_1, z_2, \dots, z_s)
\end{equation}
Following the work
of F. Bergeron \cite{Bergeron}, it may be interesting to study this symmetric function
as $c, s \rightarrow \infty$.

\subsection{A conjectural Lefschetz property of $W_n(\aaa)$}
For $r \leq n$ and $\aaa \in (\ZZ_{\geq 0})^r$, we defined a doubly graded $\symm_n$-module
$W_n(\aaa)$. 

\begin{problem}
Find the doubly graded Frobenius image $\grFrob(W_n(\aaa); q, z)$.
\end{problem}

The $z^0$-coefficient of $\grFrob(W_n(\aaa); q, z)$ gives the graded isomorphism type of $R_{n,k}$.
The $z^r$-coefficient of $\grFrob(W_n(\aaa); q, z)$ is the reversed and sign-twisted version of 
the $z^0$-coefficient.  The authors do not have a conjecture for the intermediate powers of $z$.
Indeed, we do not even know the vector space dimension $\dim W_n(\aaa)$.

Let $r \leq n$ and $\aaa \in (\ZZ_{\geq 0})^r$.
We close with a conjecture on the bigraded Hilbert series 
\begin{equation}
\label{double-hilbert-definition}
\Hilb(R_n(\aaa); q, z) := \sum_{i,j} \dim  R_n(\aaa)_{i,j} \cdot q^i z^j
\end{equation}
of the doubly graded ring $R_n(\aaa)$.
Here $R_n(\aaa)_{i,j}$ is the homogeneous piece of $R_n(\aaa)$ of $x$-degree $i$ and $\theta$-degree $j$.
By Proposition~\ref{super-annihilator-twist} the polynomial \eqref{double-hilbert-definition} 
is unchanged if we replace $R_n(\aaa)$ by the doubly graded vector space $W_n(\aaa)$.

We may display the bivariate polynomial \eqref{double-hilbert-definition} as a matrix of coefficients.
The case $n = 5, \aaa = (2,2)$ is shown below, with column indices recording $x$-degree and row
indices recording $\theta$-degree.
\begin{equation*}
\begin{pmatrix}
1 & 5 & 15 & 29 & 39 & 35 & 20 & 6 \\
4 & 19 & 50 & 77 & 77 & 50 & 19 & 4 \\
6 & 20 & 35 & 39 & 29 & 15 & 5 & 1
\end{pmatrix}
\end{equation*}
As guaranteed by Theorem~\ref{duality-theorem}, this matrix is symmetric under $180^{\circ}$ rotation.
Recall that an integer sequence $(c_1, c_2, \dots, c_m)$ is {\em unimodal} if there is some $i$ with
$c_1 \leq \cdots \leq c_i \geq c_{i+1} \geq \cdots \geq c_m$.

\begin{conjecture}
\label{unimodality-conjecture}
For any $r \leq n$ and $\aaa \in (\ZZ_{\geq 0})^r$, the matrix of coefficients of
$\Hilb(R_n(\aaa); q, z)$ has unimodal rows and columns.
\end{conjecture}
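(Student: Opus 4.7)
The plan is to establish unimodality by constructing an $\mathfrak{sl}_2 \times \mathfrak{sl}_2$-action on $R_n(\aaa)$ whose weight-space decomposition forces simultaneous unimodality along both rows and columns of the bigraded Hilbert matrix. One factor of $\mathfrak{sl}_2$ should act as a Hard Lefschetz $\mathfrak{sl}_2$ on the $x$-grading with the $\theta$-degree held fixed, and the other factor similarly on the $\theta$-grading. The $180^\circ$ rotational symmetry of the Hilbert matrix is already supplied by Theorem~\ref{duality-theorem}, so what remains is strict increase up to the middle and decrease thereafter along each row and column.

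For the $x$-direction, view $R_n(\aaa)$ through its avatar $W_n(\aaa)$ via the isomorphism of Corollary~\ref{super-annihilator-twist}. The natural raising operator is multiplication in $R_n(\aaa)$ by a generic linear form $L_x = c_1 x_1 + \cdots + c_n x_n$, which under the isomorphism corresponds to the differential operator $c_1 \partial_1 + \cdots + c_n \partial_n$ acting on $W_n(\aaa)$. A lowering partner of bidegree $(-1, 0)$ can be produced via the nondegenerate pairing of Corollary~\ref{poincare-corollary}. The standard $\mathfrak{sl}_2$ package - verifying the commutation relation $[L_x^\ast, L_x] = $ shifted $x$-degree operator and then a Hodge-Riemann-style positivity argument using the bilinear form $\langle -, - \rangle$ of Lemma~\ref{bilinear-lemma} - should yield Hard Lefschetz along rows.

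The main obstacle is the $\theta$-direction, where I expect the argument to be genuinely harder. Because $\theta_i^2 = 0$, any pure linear combination $\sum d_i \theta_i$ squares to zero in $R_n(\aaa)$, so it cannot serve as a Lefschetz raising operator. I would instead search for a raising operator of mixed form $L_\theta = \sum_{i=1}^n f_i(\xx_n) \theta_i$ with carefully chosen nonconstant polynomials $f_i$, so that powers of $L_\theta$ remain nonzero modulo $I_n(\aaa)$. In the constant case $\aaa = (a, \dots, a)$ a reasonable first guess is $L_\theta = \sum x_i^a \theta_i$ paired with a lowering operator built from $\partial_i^\theta$ together with partial derivatives in the $x_i$; one would need to verify both the $\mathfrak{sl}_2$-commutation relations and commutativity with the $x$-direction $\mathfrak{sl}_2$ up to elements of $I_n(\aaa)$. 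Identifying the correct analog for non-constant sequences $\aaa$ seems to require a genuinely new idea.

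The hardest step is this construction of the second $\mathfrak{sl}_2$. Even in the base case $\aaa = \varnothing$, where $R_n(\aaa)$ collapses to the classical coinvariant ring $R_n$ and column unimodality is vacuous (the matrix has only one nontrivial row), the passage to nonempty $\aaa$ introduces a mixed $x$-$\theta$ structure with no established analog in classical Hard Lefschetz theory. Absent a geometric realization of $R_n(\aaa)$ in the spirit of Problem~\ref{poincare-problem}, a promising alternative is to prove column unimodality by an explicit injection: for each $j < r/2$ and each $x$-degree $i$, exhibit an injection from a monomial basis of $R_n(\aaa)_{i,j}$ into that of $R_n(\aaa)_{i,j+1}$, constructed from the Vandermonde combinatorics underlying the proofs of Lemma~\ref{lower-bound-lemma} and Theorem~\ref{higher-constant-sequences}. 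Such a combinatorial injection might prove more tractable than identifying the correct algebraic Lefschetz element and would yield column unimodality without recourse to $\mathfrak{sl}_2$.
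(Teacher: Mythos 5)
This statement is Conjecture~\ref{unimodality-conjecture}, which the paper leaves \emph{open}: there is no proof of it in the paper, only the closing discussion suggesting that a doubly graded Hard Lefschetz theorem would be the natural route and noting the obstruction that any linear form $\tau = c_1\theta_1 + \cdots + c_n\theta_n$ satisfies $\tau^2 = 0$. Your proposal is a research plan rather than a proof, and it essentially reproduces the paper's own speculative remarks: Lefschetz via a generic linear form in the $x$-direction, an unresolved search for a mixed-degree raising operator in the $\theta$-direction, and a fallback hope for a combinatorial injection. None of the three components is actually carried out, so there is a genuine gap --- indeed the entire argument is missing at the decisive step.

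Concretely: (i) even row unimodality is not established by your sketch, because the known Hard Lefschetz statement for coinvariant rings (the result of Maeno--Numata--Wachi cited in the paper) applies only to the case $\aaa = \varnothing$; for nonempty $\aaa$ the ring $R_n(\aaa)$ has no known geometric model (this is exactly Problem~\ref{poincare-problem}), and the claim that $[L_x^\ast, L_x]$ is a degree operator and that the form of Lemma~\ref{bilinear-lemma} yields Hodge--Riemann positivity is asserted, not verified --- note in particular that the form is negative definite in $\theta$-degrees $r \equiv 2,3 \pmod 4$, so sign bookkeeping is nontrivial. (ii) For the $\theta$-direction you correctly identify the obstruction $\tau^2 = 0$ and propose $L_\theta = \sum_i x_i^a\theta_i$ as a ``first guess,'' but you neither verify it is nonzero in $R_n(\aaa)$, nor that its powers act injectively/surjectively as required, nor that it commutes with the $x$-direction $\mathfrak{sl}_2$; for nonconstant $\aaa$ you explicitly say a new idea is needed. (iii) The combinatorial injection alternative is not constructed. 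Since the conjecture remains open in the paper, a correct submission would either supply these missing constructions with proofs or present the statement as a conjecture with supporting evidence, not as a theorem with a proof.
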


When $\aaa = \varnothing$, Conjecture~\ref{unimodality-conjecture} follows from the Hard Lefschetz
Theorem.  In this case, we have the geometric interpretation 
$R_n(\varnothing) = R_n = H^{\bullet}(\mathcal{F \ell}_n)$ 
of $R_n(\varnothing)$ as  the (singly-graded) cohomology of the K\"ahler manifold $\mathcal{F \ell}_n$.
Recall that $n(n-1)$ is the top degree of the cohomology ring
$H^{\bullet}(\mathcal{F \ell}_n)$.
The Hard Lefschetz Theorem states that there is an element $\ell \in H^2(\mathcal{F \ell}_n)$ 
such that for all $d \leq {n \choose 2}/2$ the multiplication map
\begin{equation}
\ell^{{n \choose 2} - 2d} \times (-): H^{2d}(\mathcal{F \ell}_n) \rightarrow H^{n(n-1) - 2d}(\mathcal{F \ell}_n)
\end{equation}
is an isomorphism of vector spaces.
Any element $\ell \in H^2(\mathcal{F \ell}_n)$ with this property is called a {\em (strong) Lefschetz element}.
In terms of the presentation
 $H^{\bullet}(\mathcal{F \ell}_n) = R_n = \QQ[x_1, \dots, x_n]/\langle e_1, \dots, e_n \rangle$, we may
 represent any element $\ell \in H^2(\mathcal{F \ell}_n)$ as a $\QQ$-linear combination 
 $c_1 x_1 + \cdots + c_n x_n$ of the variables $x_1, \dots, x_n$.  The element $\ell$ is Lefschetz if and only if 
$c_i \neq c_j$ for all $i \neq j$ \cite{MNW}.

Conjecture~\ref{unimodality-conjecture} would be best proven by a doubly graded version of the 
Hard Lefschetz Theorem.
For the symmetric grading, one could hope that multiplication by an appropriate
 linear form $\ell = c_1 x_1 + \cdots + c_n x_n$ with $c_i \neq c_j$ for $i \neq j$ would 
be surjective or injective depending on the relative sizes of the entries in a row of 
$\Hilb(R_n(\aaa);q,z)$.
On the other hand, if $\tau = c_1 \theta_1 + \cdots + c_n \theta_n$ is {\em any} $\QQ$-linear combination
of the $\theta$-variables, we have $\tau^2 = 0$, so we would need a new model for
the antisymmetric part of a Lefschetz element.

Ideally, the unimodality of Conjecture~\ref{unimodality-conjecture} would be explained by 
the geometry of objects with algebraic invariants given by superspace quotients.
We leave this project for future work.

\section{Acknowledgements}
\label{Acknowledgements}

B. Rhoades is grateful to the organizers and hosts of
`Representation Theory Connections to $(q,t)$-Combinatorics' at BIRS in January 2019
where this project was initiated.
B. Rhoades was partially supported by NSF Grant DMS-1500838.
The authors are grateful to 
Nantel Bergeron, Sara Billey, Jeff Rabin,
Josh Swanson, and Mike Zabrocki for helpful discussions.
In particular, the authors thank Mike Zabrocki for suggesting the use 
of polarization operators.

\end{document}